\newtheorem{lem}{Lemma}[section]
\newtheorem{thm}[lem]{Theorem}
\newtheorem{proposition}[lem]{Proposition}
\newtheorem{prop}[lem]{Proposition}
\newtheorem{cor}[lem]{Corollary}
\theoremstyle{definition}
\newtheorem{remark}[lem]{Remark}
\newtheorem{ex}{Example}
\DeclareMathAlphabet{\curly}{U}{rsfs}{m}{n}
\newcommand{\End}{\operatorname{End}}
\newcommand{\Aut}{\operatorname{Aut}}
\newcommand{\Gal}{\operatorname{Gal}}
\newcommand{\ord}{\operatorname{ord}}
\newcommand{\Q}{\mathbb{Q}}
\newcommand{\Z}{\mathbb{Z}}
\newcommand{\GL}{\operatorname{GL}}
\newcommand{\OO}{\mathcal{O}}
\newcommand{\0}{\mathcal{O}}
  \newcommand{\im}{\operatorname{im}}
\mathchardef\mhyphen="2D
\title{Minimal Torsion Curves in Geometric Isogeny Classes}
\author{Abbey Bourdon}
\address{Wake Forest University, Winston-Salem, NC 27109, USA}
\email{bourdoam@wfu.edu}
\urladdr{http://users.wfu.edu/bourdoam/}
\author{Nina Ryalls}
\address{University of Georgia, Athens, GA 30602 USA}
\email{Nina.Ryalls@uga.edu}
\author{Lori D. Watson}
\address{Trinity College, Hartford, CT 06106 USA}
\email{lori.watson@trincoll.edu}
\urladdr{https://loridwatson.com}
\begin{document}

\begin{abstract} 
In this paper, we introduce the study of minimal torsion curves within a fixed geometric isogeny class. For a $\overline{\Q}$-isogeny class $\mathcal{E}$ of elliptic curves and $N \in \Z^+$, we wish to determine the least degree of a point on the modular curve $X_1(N)$ associated to any $E \in \mathcal{E}$. In the present work, we consider the cases where $\mathcal{E}$ is \emph{rational}, i.e., contains an elliptic curve with rational $j$-invariant, or where $\mathcal{E}$ consists of elliptic curves with complex multiplication (CM). If $N=\ell^k$ is a power of a single prime, we give a complete characterization upon restricting to points of odd degree, and also in the case where $\mathcal{E}$ is CM. We include various partial results in the more general setting. 
\end{abstract}

\maketitle

\section{Introduction}



The modular curve $X_1(N)$ is an algebraic curve over $\Q$ whose non-cuspidal points parameterize elliptic curves with a distinguished point of order $N$. Understanding degree $d$ points\footnote{Recall that the degree of a closed point $x$ on a curve $C$ over a number field $k$ is the degree of its residue field: $[k(x):k]$. This is equivalent to the size of the Galois orbit of points in $C(\overline{k})$ associated to $x$.} on $X_1(N)$ is an essential step in classifying torsion subgroups of elliptic curves defined over number fields of degree $d$ --- a problem which is solved only for $d \leq 4$  \cite{mazur77,kamienny86,KM88,kamienny92,Deg3Class,classificationdeg4points}. By the Riemann-Roch Theorem, the curve $X_1(N)$ has infinitely many points in any degree greater than its genus, so it suffices to characterize points of low degree. One important class of low degree points are \textbf{sporadic points}, which are $x\in X_1(N)$ for which there are only finitely many points of degree at most $\deg(x)$. Ruling out unexpected sporadic points on $X_1(N)$ would have implications for major open questions in the field, including Serre's Uniformity problem (as in \cite[Theorem 1.3]{OddDegQCurve}) and uniformity conjectures for the modular curve $X_0(N)$; see \cite[Conjecture 18]{ogg} and \cite[Conjecture 1.1]{QuadPointsConjecture2024}. Such applications further motivate the need for an improved understanding of low degree points



While studying low degree points on $X_1(N)$ is quite difficult in general, the problem can be made more tractable by restricting the class of points under consideration. One line of investigation considers only points associated to elliptic curves with complex multiplication (CM); see, for example, \cite{BC2,LeastCMdegree,ClarkVolcanoes}. The least degree of a point on $X_1(N)$ associated to an elliptic curve with CM by an order $\OO$ in an imaginary quadratic field $K$ is given in \cite[Theorem 1.2]{BC2}. One could more generally hope to characterize the least degree of a point on $X_1(N)$ as we range over elliptic curves with CM by \emph{any} order in $K$. Since an elliptic curve with CM by $\mathcal{O} \subseteq K$ is isogenous over $\overline{\Q}$ to one with CM by the full ring of integers, this is in essence a special case of the following question: For a given geometric isogeny class $\mathcal{E}$ of elliptic curves and fixed $N\in \Z^+$, what is the least degree of a point on $X_1(N)$ associated to an elliptic curve in $\mathcal{E}$? And what elliptic curve(s) in $\mathcal{E}$ attain a point on $X_1(N)$ of least possible degree? We investigate these questions in the present work.


Let $\mathcal{E}$ be a geometric isogeny class of elliptic curves. That is, for any $E_1,E_2 \in \mathcal{E}$, there exists an isogeny $\varphi: E_1 \rightarrow E_2$ defined over $\overline{\Q}$. We want to find the minimum $d$ such that there exists $F$ of degree $d$ and $E/F \in \mathcal{E}$ such that $E(F)$ contains a point $P$ of order $N$. We call $E$ a \textbf{minimal torsion curve for $\mathcal{E}$ of level $N$}. Because $F$ is of minimal degree, the residue field of the closed point associated to $(E,P)\in X_1(N)(\overline{\Q})$ is also of degree $d$, and so we may alternatively view $E$ as a curve over $\overline{\Q}$ identified by its $j$-invariant, $j_{min}$. In this paper, we study minimal torsion curves of prime-power level. For CM isogeny classes, we obtain a near complete characterization.

\begin{thm}\label{CMthm}
Let $\mathcal{E}$ be a $\overline{\Q}$-isogeny class of elliptic curves with CM by an order in the imaginary quadratic field $K$. For a prime number $\ell$, the the least degree of a point on $X_1(\ell^k)$ associated to any $E \in \mathcal{E}$ is given in Propositions \ref{CaseSplit}, \ref{CaseInert}, and \ref{CaseRamified}. If $\ell$ is split in $K$, then an elliptic curve with CM by the full ring of integers in $K$ is a minimal torsion curve for $\mathcal{E}$ of level $\ell^k$ and $[\Q(j_{min}):\Q]=h_K$, the class number of $K$. Otherwise $[\Q(j_{min}):\Q] \rightarrow \infty$ as $k \rightarrow \infty$.
\end{thm}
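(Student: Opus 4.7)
The plan is to leverage Propositions \ref{CaseSplit}, \ref{CaseInert}, and \ref{CaseRamified} to reduce the theorem to an optimization problem: for each order $\OO_f \subseteq \OO_K$ of conductor $f$ whose corresponding $j$-invariant lies in $\mathcal{E}$, these propositions give an explicit formula for the least degree $d(\ell^k, f)$ of a point on $X_1(\ell^k)$ above an $E/\overline{\Q}$ with $\End(E) \cong \OO_f$. Each such formula factors, roughly, as
\[
d(\ell^k, f) \;=\; [\Q(j(E)):\Q]\cdot T(\ell^k, f)\;=\; h(\OO_f)\cdot T(\ell^k, f),
\]
where the torsion growth factor $T(\ell^k,f)$ depends only on the splitting behavior of $\ell$ in $\OO_f$ (which in turn depends on the Kronecker symbol $\left(\tfrac{d_K}{\ell}\right)$ and on $v_\ell(f)$). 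The task is then to show that the overall minimum of $d(\ell^k,f)$ over $f$ is attained at $f=1$ when $\ell$ splits in $K$, and that the optimizing $f$ satisfies $h(\OO_f)\to\infty$ with $k$ otherwise.

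For the split case, I would argue as follows. When $\ell$ splits in $\OO_K$, an elliptic curve with CM by $\OO_K$ admits a $\Q(j(E))$-rational $\ell$-isogeny coming from the split prime $\mathfrak{l} \mid \ell$, and Proposition \ref{CaseSplit} yields $T(\ell^k, 1)$ equal to roughly $\ell^{k-1}(\ell-1)/|\OO_K^\times|$, with the point of order $\ell^k$ generating a subfield of the ray class field of conductor $\mathfrak{l}^k$. For $f > 1$, one checks that $T(\ell^k, f)$ is bounded below by $T(\ell^k,1)$ (even when $\ell \mid f$ produces a local splitting of $\ell$ in $\OO_f$, the $\mathfrak{l}$-adic component of the local unit group grows), while $h(\OO_f) = h_K\cdot f\cdot [\OO_K^\times : \OO_f^\times]^{-1}\prod_{p\mid f}\!\bigl(1-\bigl(\tfrac{d_K}{p}\bigr)p^{-1}\bigr) \geq h_K$, with equality only at $f=1$ outside a few tiny classically known cases (handled by inspection). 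Thus $j_{\min}$ may be taken to correspond to CM by $\OO_K$, giving $[\Q(j_{\min}):\Q] = h_K$.

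For the non-split cases, I would exploit the fact that for $\ell$ inert or ramified in $\OO_K$, Propositions \ref{CaseInert} and \ref{CaseRamified} give $T(\ell^k, 1)$ growing like $\ell^{2k-2}(\ell\pm 1)$ — the mod-$\mathfrak{l}$ representation is nonsplit Cartan or Borel-of-ramified type, forcing all of $E[\ell^k]$ into play. However, for an order $\OO_f$ with $v_\ell(f) = a \geq 1$, the prime $\ell$ admits a ``proper'' factorization in $\OO_f$ that lets us split off an $\ell^a$-isogeny structure, so $T(\ell^k,f)$ drops to order $\ell^{2(k-a)}$ times a bounded factor. Multiplying by $h(\OO_f)$, which is of order $\ell^a$, the product $d(\ell^k, f)$ is minimized over $a$ at some value $a^*(k)$ with $1 \leq a^*(k)$ grows (in the inert case, balancing $\ell^{a} \cdot \ell^{2(k-a)}$ shows the optimum pushes $a$ as large as possible subject to $a \leq k$); a comparison of exponents shows that any fixed $a$ is eventually beaten by a larger one, so $a^*(k) \to \infty$. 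Since $h(\OO_f) \geq f/[\OO_K^\times:\OO_f^\times]$, this forces $[\Q(j_{\min}):\Q] = h(\OO_{f^*}) \to \infty$.

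The main obstacle is the careful bookkeeping in the non-split case: one must verify the drop in $T(\ell^k,f)$ when $\ell\mid f$ genuinely outweighs the growth in $h(\OO_f)$ for small $a$ but is outpaced eventually, and separately handle the ramified case where the behavior is asymmetric between the two primes above $\ell$ in the order. Small-$k$ boundary cases (and the degenerate $K \in \{\Q(i), \Q(\zeta_3)\}$ where the unit correction is nontrivial) will need direct verification, but the main asymptotic optimization follows from comparing the explicit $\ell$-adic valuations extracted from the three propositions.
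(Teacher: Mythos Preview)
Your overall plan matches the paper's: Theorem \ref{CMthm} is a summary of Propositions \ref{CaseSplit}, \ref{CaseInert}, and \ref{CaseRamified}, each of which optimizes the Bourdon--Clark least-degree formulas over the conductor of the CM order. But two points in your write-up need correction.

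First, a small misreading: the three propositions do not give per-order formulas $d(\ell^k,f)$; they already record the minimum over all orders and the conductor at which it is attained. The per-order input is \cite[Theorems 4.1, 6.1, 6.2, 6.6]{BC2} together with the class-number formula \eqref{ClassNo}, which is exactly what the paper's proofs of the propositions cite. So what you describe as the remaining optimization is already carried out there.

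Second, and more substantively, your heuristic $T(\ell^k,\ell^a)\sim \ell^{2(k-a)}$ in the non-split case is not correct across the full range of $a$, and it leads you to the wrong optimizer. Balancing $\ell^{a}\cdot \ell^{2(k-a)}=\ell^{2k-a}$ indeed pushes $a$ up to $k$, giving a putative minimum of order $\ell^{k}$; but Propositions \ref{CaseInert} and \ref{CaseRamified} show the true minimum is of order $\ell^{3k/2}$, attained at $a=\lfloor k/2\rfloor$, and that any minimizer must satisfy $a\geq (k-3)/2$. The discrepancy comes from the case split in \cite[Theorem 4.1]{BC2}: once $a$ is large relative to $k$ the drop in $T$ flattens out (roughly, $T$ cannot go below $\varphi(\ell^{k})/\#\OO^{\times}$), so your formula overstates the savings for $a>\lfloor k/2\rfloor$. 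Your qualitative conclusion $a^*(k)\to\infty$ survives, so the statement $[\Q(j_{\min}):\Q]\to\infty$ is still correct, but the path you sketch to it is not. In the split case your claim ``$T(\ell^k,f)\geq T(\ell^k,1)$ and $h(\OO_f)\geq h_K$'' is also not quite how the paper argues: Proposition \ref{CaseSplit} instead invokes a divisibility bound from \cite[Theorem 6.2]{BC1} and then pins down the conductor-$2$ edge case via \cite[Theorems 6.2, 6.6]{BC2}.
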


If $\mathcal{E}$ is a $\overline{\Q}$-isogeny class of elliptic curves with CM by an order in $K$, then $[\Q(j(E)):\Q] \geq h_K$ for any $E \in \mathcal{E}$. Thus when $\ell$ is split in $K$, a minimal torsion curve has $[\Q(j_{min}):\Q]$ minimal for $\mathcal{E}$. However, if $\ell$ is inert or ramified in $K$, the extension $[\Q(j_{min}):\Q]$ will generally not be minimal for $\mathcal{E}$, and indeed must grow with $k$. These minimal torsion curves have exceptional arithmetic, such as the existence of a high-power $\ell^k$-isogeny defined over $\Q(j_{min})$, which more than compensates for the degree of $\Q(j_{min})$. This phenomenon was previously observed for CM points of odd degree in \cite[Remark 2.7]{BP}. We illustrate with the following example.
\begin{ex}
Let $\mathcal{E}$ be the $\overline{\Q}$-isogeny class of $E/\Q$ with LMFDB label \href{https://www.lmfdb.org/EllipticCurve/Q/27/a/1}{27.a1}. Here $K=\Q(\sqrt{-3})$, and we take $\ell=3$ which is ramified in $K$. There are two rational $j$-invariants associated to elliptic curves in $\mathcal{E}$, namely $-12288000$ and $0$. Thus $1$ is the minimal degree of an extension generated by the $j$-invariant of $E \in \mathcal{E}$. However, by Proposition \ref{CaseRamified}, the $j$-invariant of a minimal torsion curve of level $3^k$ generates an extension of degree at least $3^{(k-5)/2}$. Thus there is no minimal torsion curve with rational $j$-invariant for $k \geq 6$, and the degree of $\Q(j_{min})$ must grow with $k$.
\end{ex}

Even in non-CM isogeny classes, minimal torsion curves may have $j$-invariant generating a field of unexpectedly large degree. In some cases, these minimal torsion curves improve degree bounds arising from elliptic curves having rational $j$-invariant.

\begin{ex}
Let $\mathcal{E}$ be the $\overline{\Q}$-isogeny class containing $E/\Q$ with LMFDB label \href{https://www.lmfdb.org/EllipticCurve/Q/9225/l/1}{9225.l1}. Here $j(E)\in \Q$, but in Section $8$ we see any minimal torsion curve for $\mathcal{E}$ of level 49 has $[\Q(j_{min}):\Q] \geq 2$. This class is notable since $\mathcal{E}$ contains an elliptic curve giving a point on $X_1(49)$ of degree $42$, which is lower than the degree of any $x \in X_1(49)$ with $j(x)\in \Q$. See Corollary \ref{LowerDegCor}.
\end{ex}

Our next results concern the case where $\mathcal{E}$ is \textbf{rational}, i.e., contains an elliptic curve with $j$-invariant in $\Q$. For non-CM classes, we obtain a characterization of minimal torsion curves upon restriction to points of odd degree, strengthening \cite[Proposition 4.1]{OddDegQCurve}. 

\begin{thm} \label{Thm1.3}
Let $\mathcal{E}$ be a rational $\overline{\Q}$-isogeny class of elliptic curves which is non-CM. If $E \in \mathcal{E}$ and $x=[E,P]\in X_1(\ell^k)$ is a point of odd degree, then $\ell \in \{2,3,5,7,11,13\}$. The least odd degree point on $X_1(\ell^k)$ associated to $E' \in \mathcal{E}$ is given in Propositions \ref{BestDivLargePrimes}, \ref{DivConditionsPrime3} and \ref{2case}, while the following divisibility conditions hold and are best-possible across all such $\mathcal{E}$:

\begin{enumerate}
\item If $\ell =13$, then $3 \cdot 13^{2k-2} \mid \deg(x)$.
\item If $\ell =11$, then $5 \cdot 11^{2k-2} \mid  \deg(x)$.
\item If $\ell=7$ and $\mathcal{E}$ does not contain $E'$ with $j(E')= 3^3\cdot5\cdot7^5/2^7$, then $7^{2k-2} \mid  \deg(x)$.
\item If $\ell=7$ and $\mathcal{E}$ contains $E'$ with $j(E')= 3^3\cdot5\cdot7^5/2^7$, then $9 \cdot 7^{\max(0,2k-3)} \mid  \deg(x)$.
\item If $\ell=5$, then $5^{\max(0,2k-3)} \mid  \deg(x)$.
\item If $\ell=3$, then $3^{\max(0,2k-4)} \mid  \deg(x)$.
\item If $\ell=2$, then $k \leq 3$ and $1 \mid \deg(x)$.
\end{enumerate}
Moreover, among odd degree points on $X_1(\ell^k)$ coming from $E' \in \mathcal{E}$, a point of least odd degree can always be associated to $E_{min} \in \mathcal{E}$ with $j(E_{min}) \in \Q$ or which is $\ell$-isogenous to an elliptic curve having rational $j$-invariant. 
\end{thm}
\begin{remark}
The exceptional class for $\ell=7$ has also been identified in another context. By Sutherland \cite{Sutherland12}, elliptic curves $E/\Q$ with $j(E)=3^3\cdot5\cdot7^5/2^7$ provide the only counterexamples over $\Q$ to a local-global principle for rational isogenies of prime degree.
\end{remark}

In particular, Theorem \ref{Thm1.3} states that under the given assumptions, there exists a minimal torsion curve for $\mathcal{E}$ of level $\ell^k$ which is at most $\ell$-isogenous to an elliptic curve having rational $j$-invariant. Away from points of odd degree, Proposition \ref{LevelEllImages} shows the same condition holds for $\mathcal{E}$ containing a rational elliptic curve with $\ell$-adic Galois representation of level $\ell$. These are special cases of the following result, which is a consequence of Serre's Open Image Theorem \cite{serre72}.

\begin{thm}\label{IntroPropSerre}
Let $\mathcal{E}$ be a rational $\overline{\Q}$-isogeny class of non-CM elliptic curves, and let $\ell$ be prime. There exists a constant $C=C(\mathcal{E},\ell)$ such that for any $k \in \Z^+$ a point of least degree on $X_1(\ell^k)$ coming from $E' \in \mathcal{E}$ can be associated to $j_{min}\in \mathcal{E}$ which is $d$-isogenous to a rational $j$-invariant for some $d\leq C$.
\end{thm}

Thus unlike the CM case addressed in Theorem \ref{CMthm}, there always exists a minimal torsion curve of level $\ell^k$ with $j$-invariant in an extension of bounded degree. Unfortunately, our proof does not make the constant $C$ explicit; see Theorem \ref{PropMinTorsionCurveBound}. If $\mathcal{E}$ contains a non-CM elliptic curve $E/\Q$ with $\ell$-adic Galois representation of level $\ell$, then $C=\ell$ by Proposition \ref{LevelEllImages}. However, we fail to identify a more general connection between $C$ and the level of $E/\Q \in \mathcal{E}$, though it is natural to ask whether one exists (Remark \ref{RemarkExplicitC}).


\subsection{General Approach.} If $\mathcal{E}$ is a rational geometric isogeny class of non-CM elliptic curves, then for any $E \in \mathcal{E}$ there exists an isogeny $\varphi: E \rightarrow E_0$ defined over $\overline{\Q}$, where $E_0$ is the base extension of an elliptic curve defined over $\Q$. Up to replacing $E$ by a quadratic twist if necessary, we may assume $E$, $E_0$, and the isogeny $\varphi$ are defined over a number field $F$. A key observation is that over $F$, the image of both $\ell$-adic Galois representations have the same index in $\GL_2(\Z_{\ell})$:
\[
[\GL_2(\Z_{\ell}):\im \rho_{E/F, \ell^{\infty}}]=[\GL_2(\Z_{\ell}):\im \rho_{E_0/F, \ell^{\infty}}].
\]
See, for example, \cite[Proposition 2.1.1]{greenberg2012}. This can be leveraged to give lower bounds on the degree of a point on $X_1(\ell^k)$ associated to $\mathcal{E}$ in terms of $[\GL_2(\Z_{\ell}):\im \rho_{E_0/\Q, \ell^{\infty}}]$. We obtain the following proposition, which strengthens \cite[Lemma 4.6]{OddDegQCurve}.

\begin{prop}\label{Prop1.5intro}
Let $\mathcal{E}$ be a rational $\overline{\Q}$-isogeny class of non-CM elliptic curves. Suppose $\ell$ is a prime number and $k \in \Z^+$. There exists $E_0/\Q \in\mathcal{E}$ and $x \in X_1(\ell)$ with $j(x)=j(E_0)$ such that the degree of any point on $X_1(\ell^k)$ associated to $\mathcal{E}$ is divisible by 
\[\delta \coloneqq \begin{cases}
\deg(x)\cdot \ell^{\max(0,2k-2-d)} \text{ if $\ell$ is odd},\\
\deg(x) \cdot \ell^{\max(0,2k-3-d)} \text{ if $\ell=2$},
\end{cases}\]
where $d \coloneqq \ord_{\ell}([\GL_2(\Z_{\ell}): \im \rho_{E_0/\Q, \ell^{\infty}}])$.
\end{prop}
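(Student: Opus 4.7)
The plan is to follow the general approach set out in $\S 1.1$: combine Greenberg's index equality with an orbit calculation in $\GL_2(\Z/\ell^k\Z)$ to reduce the divisibility question for arbitrary $E \in \mathcal{E}$ to a question about the $\ell$-adic Galois image of a fixed rational representative $E_0/\Q \in \mathcal{E}$.

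First, I fix any rational $E_0/\Q \in \mathcal{E}$ and let $H_\infty = \im \rho_{E_0/\Q, \ell^\infty}$, with images $H_k$ in $\GL_2(\Z/\ell^k\Z)$; set $c_k = [\GL_2(\Z/\ell^k\Z) : H_k]$, which stabilizes to $c_\infty$ as $k$ grows, with $d = \ord_\ell c_\infty$. I choose $x \in X_1(\ell)$ with $j(x) = j(E_0)$ corresponding to an $H_1$-orbit on $E_0[\ell]/\{\pm 1\}$ whose degree divides the degrees of the mod-$\ell$ reductions of the torsion points under consideration (e.g., a minimum-size $H_1$-orbit). A short direct calculation, using $|K_k| = \ell^{4(k-1)}$ for $K_k = \ker(\GL_2(\Z/\ell^k\Z) \to \GL_2(\Z/\ell\Z))$, yields $[K_k : H_k \cap K_k] = c_k/c_1$, so this index has $\ell$-adic valuation at most $d$.

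Next, for $y_0 = [E_0, P] \in X_1(\ell^k)$ with $P$ of order $\ell^k$, consider the natural projection $\pi : X_1(\ell^k) \to X_1(\ell)$ and let $x_0 = \pi(y_0)$. The geometric fibers of $\pi$ above $x_0$ have size $\ell^{2k-2}$ for $\ell$ odd (respectively $\ell^{2k-3}$ for $\ell=2$, accounting for the $\pm I$-action being trivial modulo $2$ but not modulo higher powers). Galois acts on the fiber via $H_k$. Writing $A, A' \subset \GL_2(\Z/\ell^k\Z)$ for the stabilizers of $y_0$ and $x_0$ respectively, one checks $[A' \cap K_k : A \cap K_k] = \ell^{2k-2}$, and since $K_k$ is normal in $\GL_2(\Z/\ell^k\Z)$, the $(H_k \cap K_k)$-sub-orbit of $y_0$ has size divisible into the full $(H_k \cap A')$-orbit. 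Combining with the bound on $[K_k : H_k \cap K_k]$ gives orbit size divisible by $\ell^{\max(0,2k-2-d)}$, hence $\deg(x_0) \cdot \ell^{\max(0,2k-2-d)} \mid \deg(y_0)$.

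For an arbitrary $y = [E, P] \in X_1(\ell^k)$ with $E \in \mathcal{E}$, I invoke the general approach of $\S 1.1$: after replacing $E$ by a quadratic twist (which preserves $\deg(y)$), there is a number field $F$ over which $E$, $E_0$, and an isogeny $\varphi: E \to E_0$ are all defined, and Greenberg's Proposition 2.1.1 gives $[\GL_2(\Z_\ell) : \im \rho_{E/F, \ell^\infty}] = [\GL_2(\Z_\ell) : \im \rho_{E_0/F, \ell^\infty}]$. Using $\varphi$ together with its dual, the $G_F$-orbit of $\pm P$ on $E$ transfers to a $G_F$-orbit of a point of order $\ell^k$ on $E_0$, reducing the divisibility for $\deg(y)$ to the divisibility for the corresponding $\deg(y_0)$ established above.

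The main obstacle is the transfer step: one must track the divisibility through the isogeny $\varphi$ and the base change $F/\Q$ without losing $\ell$-adic information. The isogeny $\varphi$ may shift the $\ell$-adic order of torsion points by its kernel's $\ell$-part, and Greenberg's equality lives over $F$ rather than $\Q$, contributing a factor bounded by $[F:\Q]$ that must be shown not to absorb any $\ell$-power in $\delta$. Both issues can be handled by choosing $\varphi$ with degree balanced at $\ell$ (composing with the dual if necessary) and by noting that $[\Q(j(E)):\Q]$ is already a factor of $\deg(y)$, so the $F$-descent contribution is absorbed.
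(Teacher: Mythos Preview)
Your approach diverges from the paper's, and the divergence is where the real difficulty lies. The paper does \emph{not} attempt to transfer the full $\ell^k$-torsion point through the isogeny, nor does it do an orbit calculation in $\GL_2(\Z/\ell^k\Z)$ for an arbitrary $E \in \mathcal{E}$. Instead it works directly with $F = \Q(y)$ for an arbitrary $y \in X_1(\ell^k)$, uses Lemma~\ref{Cor:IsogenyDef} to realize the isogeny $E \to E'$ as an $F$-rational map with $j(E') \in \Q$, and then cites \cite[Lemma~4.6]{OddDegQCurve} as a black box for the divisibility $\ell^{\max(0,2k-2-d)} \mid [F:\Q]$. The point of order $\ell$ on $E'$ is produced over an extension $F'/F$ of degree dividing $\ell$ via \cite[Corollary~4.3]{OddDegQCurve}; only this order-$\ell$ point is transferred, never a point of order $\ell^k$. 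Finally, the paper runs a case analysis on $\im \rho_{E_0,\ell}$, and in the listed exceptional cases (e.g.\ $\ell=5$ with image $5B.1.2$, $5B.1.3$, $5B.4.2$; see Remark~\ref{BadImageRmk}) replaces $E_0$ by a $\Q$-isogenous curve $E_0/C$ before concluding.

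Against this, your argument has two genuine gaps. First, you open by fixing ``any rational $E_0/\Q \in \mathcal{E}$,'' but the proposition asserts only that \emph{some} $E_0$ works, and Remark~\ref{BadImageRmk} shows that for certain mod-$\ell$ images the conclusion fails for that particular $E_0$; the required replacement by $E_0/C$ is absent from your sketch. Second, the transfer in your third and fourth paragraphs is not a proof. The sentence ``the $G_F$-orbit of $\pm P$ on $E$ transfers to a $G_F$-orbit of a point of order $\ell^k$ on $E_0$'' is not justified: if the $\ell$-part of $\deg\varphi$ is nontrivial, $\varphi(P)$ can have order strictly less than $\ell^k$, and composing with the dual does not repair this in any controlled way. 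Even granting such a point $P_0$ on $E_0$, you would need to compare $\deg[E_0,P_0]$ over $\Q$ with $\deg(y)$, and the claim that ``the $F$-descent contribution is absorbed'' because $[\Q(j(E)):\Q] \mid \deg(y)$ does not do this: the field $F$ over which Greenberg's index equality holds can be strictly larger than $\Q(j(E))$, and nothing in your outline bounds the $\ell$-valuation of $[F:\Q(j(E))]$. These are precisely the points that the paper outsources to Lemma~\ref{Cor:IsogenyDef} and the cited results from \cite{OddDegQCurve}, and which your orbit computation does not replace.
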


In some cases, we show these lower bounds are best-possible by explicitly constructing $E' \in \mathcal{E}$ giving a degree $\delta$ point on $X_1(\ell^k)$. A natural example is when $d=0$ and $\ell$ is odd, in which case a twist of $E_0$ is a minimal torsion curve for $X_1(\ell^k)$; if $d>0$, it may be that $j(E') \notin \Q$. In other instances, the lower bounds of Proposition \ref{Prop1.5intro} can be strengthened by showing that a point on $X_1(\ell^k)$ in degree $\delta$ would produce a subgroup of $\im \rho_{E_0/\Q,\ell^{\infty}}$ which does not occur; see Proposition \ref{Prop5.7}. Throughout we make use of the partial classification of images of $\ell$-adic Galois representations of elliptic curves over $\Q$ due to Rouse and Zureick-Brown \cite{RouseDZB} and Rouse, Sutherland, and Zureick-Brown \cite{RSZ21}. 

\begin{remark} Let $\mathcal{E}$ be a rational $\overline{\Q}$-isogeny class of non-CM elliptic curves. By Theorem \ref{IntroPropSerre}, any minimal torsion curve for $X_1(\ell^k)$ is at most $C$-isogenous to an elliptic curve with rational $j$-invariant for some $C$ which does not depend on $k$. Thus there exists a finite set of $j$-invariants $\mathcal{J}=\{j_1, j_2, \dots, j_r\}$ such that for any $k \in \Z^+$, there exists a minimal torsion curve for $\mathcal{E}$ of level $\ell^k$ with $j$-invariant in $\mathcal{J}$. However, our proof does not make $\mathcal{J}$ explicit, so our results may not be obtained by checking a finite number of $j$-invariants. On the other hand, for a fixed $k \in \Z^+$, a finite check is possible since the $j$-invariant of any minimal torsion curve must generate an extension degree at most $\deg(X_1(\ell^k) \rightarrow X_1(1))$.
\end{remark}

Results concerning CM isogeny classes rely heavily on prior work of the first author in collaboration with Clark \cite{BC1,BC2}.

\subsection{Other Related Work.} Cremona and Najman \cite{CremonaNajmanQCurve} prove numerous results concerning torsion points of $\Q$-curves defined over number fields of odd degree. Any such elliptic curve is necessarily isogenous to one having rational $j$-invariant, providing immediate ties to our study of minimal torsion curves in rational geometric isogeny classes. This class of $\Q$-curves is again studied in \cite{OddDegQCurve}, where the first author and Najman show the $\overline{\Q}$-isogeny class containing the elliptic curve with $j$-invariant $-140625/8$ is the unique rational non-CM class giving rise to a sporadic point of odd degree on any modular curve $X_1(N)$. More recent work of Genao \cite{Genao21} provides ``typical" bounds on the size of the torsion subgroup of an elliptic curve over a number field which belongs to a rational $\overline{\Q}$-isogeny class, and his subsequent work \cite{Genao22} pursues polynomial bounds on such torsion subgroups.

Prior work of the first author and Clark \cite{BC2} gives the least degree of a point on $X_1(N)$ associated to an elliptic curve with CM by a fixed order in an imaginary quadratic field, while work of Clark, Genao, Pollack, and Saia \cite{LeastCMdegree} investigates the least degree of any CM point on $X_1(N)$. Our results fall somewhat in the middle of these two directions of study -- we investigate the least degree of a CM point across all orders within a fixed imaginary quadratic field. 

\subsection{Code.} We make frequent use of the computer algebra system Magma \cite{Magma}. All code is available at \url{https://github.com/abbey-bourdon/minimal_torsion_curves}.

\section*{Acknowledgements} We thank John Cremona, \'{A}lvaro Lozano-Robledo, Jeremy Rouse, Parker Schwartz, and Andrew Sutherland for helpful conversations. We thank John Cremona, Michael Eddy, Tyler Genao, and Filip Najman for helpful comments on an earlier version of this paper. We are especially grateful to the anonymous referee, who made many comments and suggestions with greatly improved the exposition. All authors were supported by NSF grant DMS-2137659. The first author was partially supported by an A. J. Sterge Faculty Fellowship and NSF grant DMS-2145270.

\section{Background and Notation}

\subsection{Conventions.} Throughout, $F$ denotes a number field and $\overline{F}$ denotes a fixed algebraic closure of $F$. We write $\Gal_F$ for the absolute Galois group $\Gal(\overline{F}/F)$.

For an elliptic curve $E$ defined over $F$ and $N\in\Z^+$, the collection of all points in $E(\overline{F})$ of order dividing $N$ is denoted $E[N]$. This is a free $\Z/N\Z$-module of rank 2. Any elliptic curve $E/F$ corresponds to an equation of the form $y^2=x^3+Ax+B$, and we can define its $j$-invariant to be $j(E) \coloneqq 1728 \frac{4A^3}{4A^3+27B^2}$. This element of $F$ characterizes $E$ up to $\overline{F}$-isomorphism, and we call any $E'$ with $j(E')=j(E)$ a twist of $E$.

For a prime number $\ell$, our notation for subgroups of $\GL_2(\Z/\ell\Z)$ and $\GL_2(\Z_{\ell})$ follows \cite{sutherland} and \cite{RSZ21}, respectively. For $\ell$-adic images, this is known as the ``RSZB label" and has the form \texttt{N.i.g.n}, where \texttt{N} is the level, \texttt{i} is the index, \texttt{g} is the genus, and \texttt{n} is a positive integer used to distinguish nonconjugate subgroups. We also refer to specific elliptic curves over $\Q$ by their $L$-functions and Modular Forms Database \cite{LMFDB} (LMFDB) label.

We always view the modular curve $X_1(N)$ as an algebraic curve over $\Q$; see $\S\ref{Section2.4}$ for details. By closed point, we mean a $\Gal_{\Q}$-orbit of points in $X_1(N)(\overline{\Q})$. If $x \in X_1(N)$ is closed, we define the degree of $x$ to be the degree of its residue field $\Q(x)$. By taking the sum of Galois conjugates, such a closed point of degree $d$ can be viewed as an irreducible $\Q$-rational effective divisor of degree $d$.

\subsection{Galois Representations}\label{Section2.2} Let $E$ be an elliptic curve defined over a number field $F$, and let $\ell$ be a prime number. For any $k \in \Z^+$, the elements of $\Gal_F$ induce natural automorphisms of the points in $E(\overline{F})$ of order dividing $\ell^k$, denoted $E[\ell^k]$. This action is recorded in the \textbf{mod $\ell^k$ Galois representation} associated to $E$, 
\[
\rho_{E,\ell^k}: \Gal_F \rightarrow \Aut(E[\ell^k]) \cong \GL_2(\Z/\ell^k\Z).
\]
By choosing compatible bases as $k$ ranges over all positive integers, the mod $\ell^k$ Galois representations fit together to give the \textbf{$\ell$-adic Galois representation} associated to $E$, 
\[
\rho_{E,\ell^{\infty}}: \Gal_F \rightarrow \GL_2(\Z_{\ell}),
\]
which encodes the Galois action on all points in $E(\overline{F})$ of order a power of $\ell$. If $E$ is non-CM, then $\im \rho_{E,\ell^{\infty}}$ is an open subgroup of $\GL_2(\Z_{\ell})$ by Serre's Open Image Theorem \cite{serre72}. Thus there exists a nonnegative integer $d$ such that $\im \rho_{E,\ell^{\infty}}=\pi^{-1}(\im \rho_{E,\ell^d})$, where $\pi:\GL_2(\Z_{\ell}) \rightarrow \GL_2(\Z/\ell^d\Z)$ is the natural reduction map. The smallest such $\ell^d$ for which this holds is called the \textbf{level} of the $\ell$-adic Galois representation.

Aside from $\GL_2(\Z/\ell\Z)$, all groups which are known to occur as the image of the mod $\ell$ Galois representation associated to a non-CM elliptic curve over $\Q$ appear (up to conjugacy) in Tables 3 and 4 of \cite{sutherland}. This list is complete for $\ell \leq 13$ by work of Zywina \cite{ZywinaImages} and Balakrishnan, Dogra, M\"{u}ller, Tuitman, and Vonk \cite{Balakrishnan}, and it has been conjectured to be complete for all $\ell$ by both Sutherland \cite[Conjecture 1.1]{sutherland} and Zywina \cite[Conjecture 1.12]{ZywinaImages}. Unconditionally, we have the following result for $\ell \geq 17$.

\begin{thm}[Mazur \cite{mazur78}, Serre \cite{Serre81}, Bilu, Parent, and Rebolledo \cite{BPR13}]\label{ThmRemainingCases} 
Suppose $E/\Q$ is a non-CM elliptic curve and $\ell \geq 17$ is prime. If $\im \rho_{E,\ell}$ is not equal to $\GL_2(\Z/\ell\Z)$ and not conjugate to a group in Table 3 or 4 of \cite{sutherland}, then $\im \rho_{E,\ell}$ is contained in $C_{ns}^+(\ell)$, the normalizer of a non-split Cartan subgroup of $\GL_2(\Z/\ell\Z)$.
\end{thm}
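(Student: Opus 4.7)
The plan is to carry out a standard three-step elimination, going through Dickson's classification of (maximal) subgroups of $\mathrm{GL}_2(\mathbb{F}_\ell)$ and ruling out every possibility except the normalizer of a non-split Cartan, by invoking the three works in the attribution. Concretely, for $\ell \geq 5$, if $G \leq \mathrm{GL}_2(\mathbb{F}_\ell)$ is a proper subgroup containing $-I$ and with surjective determinant, then up to conjugation $G$ is contained in (i) a Borel subgroup, (ii) the normalizer $C_{s}^+(\ell)$ of a split Cartan, (iii) the normalizer $C_{ns}^+(\ell)$ of a non-split Cartan, or (iv) the preimage in $\mathrm{GL}_2(\mathbb{F}_\ell)$ of an "exceptional" subgroup of $\mathrm{PGL}_2(\mathbb{F}_\ell)$ isomorphic to $A_4$, $S_4$, or $A_5$. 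For any non-CM $E/\mathbb{Q}$, the image $\mathrm{im}\,\rho_{E,\ell}$ contains $-I$ and has full determinant (by the Weil pairing), so if it is proper in $\mathrm{GL}_2(\mathbb{F}_\ell)$ it must land in one of these four types.

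My first step is to rule out case (i). Containment of $\mathrm{im}\,\rho_{E,\ell}$ in a Borel subgroup is equivalent to $E/\mathbb{Q}$ admitting a $\mathbb{Q}$-rational cyclic $\ell$-isogeny. Mazur's theorem \cite{mazur78} on rational isogenies of prime degree implies this only happens for $\ell \in \{2,3,5,7,11,13,17,19,37,43,67,163\}$. For $\ell \geq 17$ in that list, the finitely many resulting $j$-invariants produce known mod-$\ell$ images, all of which appear in Table 3 of \cite{sutherland}; hence under the hypothesis that the image is not conjugate to any group in that table, case (i) is impossible.

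The second step handles case (iv): here I would invoke Serre's analysis in \cite{Serre81}, which shows via a careful study of inertia at $\ell$ (together with constraints coming from Frobenius traces) that for $\ell \geq 17$ no non-CM $E/\mathbb{Q}$ can have projective mod-$\ell$ image isomorphic to $A_4$, $S_4$, or $A_5$; any subgroup realizing such a projective image for small $\ell$ is in Table 4 of \cite{sutherland}. The final step, and the main obstacle, is case (ii): one must show that for $\ell \geq 17$ no non-CM $E/\mathbb{Q}$ has $\mathrm{im}\,\rho_{E,\ell} \subseteq C_s^+(\ell)$. Equivalently, the only $\mathbb{Q}$-rational points on the modular curve $X_s^+(\ell)$ for such $\ell$ are cusps and CM points. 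This is precisely the theorem of Bilu, Parent, and Rebolledo \cite{BPR13}, whose proof uses Runge's method on $X_s^+(\ell)$ together with isogeny-character analysis; any direct attempt to reprove it is far beyond a sketch, so I would simply cite it.

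Combining these three eliminations with the fact that $\mathrm{im}\,\rho_{E,\ell} \neq \mathrm{GL}_2(\mathbb{F}_\ell)$ by hypothesis, the only remaining option in Dickson's list is case (iii), so $\mathrm{im}\,\rho_{E,\ell} \subseteq C_{ns}^+(\ell)$, as claimed. The genuinely hard step is the elimination of the split Cartan normalizer via \cite{BPR13}; the Borel and exceptional eliminations are essentially bookkeeping against the tables in \cite{sutherland}.
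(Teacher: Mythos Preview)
The paper does not supply its own proof of this theorem: it is stated in \S\ref{Section2.2} as a background result attributed to Mazur, Serre, and Bilu--Parent--Rebolledo, with no argument given. Your sketch is the standard way one assembles the statement from those three references via Dickson's classification, and it is essentially correct.

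One small inaccuracy: you assert that $\operatorname{im}\rho_{E,\ell}$ always contains $-I$. This is false in general (complex conjugation gives an element conjugate to $\operatorname{diag}(1,-1)$, not $-I$, and there are genuine mod-$\ell$ images without $-I$). Fortunately the claim is unnecessary: Dickson's classification applies to any proper subgroup of $\operatorname{GL}_2(\mathbb{F}_\ell)$, and the surjectivity of the determinant (which \emph{is} automatic via the Weil pairing and irreducibility of cyclotomic polynomials over $\mathbb{Q}$) already forces the image into one of your four types (i)--(iv). So simply delete the $-I$ clause. A second minor point: in your handling of case (iv) you say exceptional images ``for small $\ell$'' appear in Table 4 of \cite{sutherland}, but since you are working under the hypothesis $\ell\geq 17$, Serre's result rules out (iv) outright and the table is irrelevant here; the tables only enter in cases (i) for $\ell\in\{17,19,37\}$.
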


Refinements of Theorem \ref{ThmRemainingCases} appear in \cite[Proposition 1.13]{ZywinaImages}, which is also proven in \cite[Appendix B]{LeFournLemos}. Recently, Furio and Lombardo \cite{FurioLombardo23} have shown that proper subgroups of the normalizer of a non-split Cartan subgroup do not occur for primes larger than 37. Taken together with prior work, \cite[Theorem 1.6]{FurioLombardo23} implies the following result.

\begin{thm}[Furio, Lombardo \cite{FurioLombardo23}]
Suppose $E/\Q$ is a non-CM elliptic curve and $\ell \geq 17$ is prime. If $\im \rho_{E,\ell}$ is not equal to $\GL_2(\Z/\ell\Z)$ and not conjugate to a group in Table 4 of \cite{sutherland}, then $\im \rho_{E,\ell}$ is conjugate to $C_{ns}^+(\ell)$.
\end{thm}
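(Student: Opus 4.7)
The plan is to derive this theorem by combining the previous Theorem \ref{ThmRemainingCases} with the main result of Furio--Lombardo together with a finite check for small primes. The starting point is that Theorem \ref{ThmRemainingCases} already reduces the problem: for a non-CM elliptic curve $E/\Q$ with $\ell \geq 17$, if $\im \rho_{E,\ell}$ is neither $\GL_2(\Z/\ell\Z)$ nor conjugate to a group in Tables 3 or 4 of \cite{sutherland}, then $\im \rho_{E,\ell} \subseteq C_{ns}^+(\ell)$. To upgrade this to the strengthened conclusion, I need to accomplish two things: eliminate Table 3 as a separate exceptional case, and promote the containment $\im \rho_{E,\ell} \subseteq C_{ns}^+(\ell)$ to equality.

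First I would address the containment-to-equality step. Suppose $\im \rho_{E,\ell}$ is a proper subgroup of $C_{ns}^+(\ell)$. For $\ell > 37$, \cite[Theorem 1.6]{FurioLombardo23} rules this out directly: no proper subgroup of the normalizer of a non-split Cartan occurs as a mod $\ell$ image for a non-CM elliptic curve over $\Q$. For the remaining primes $17 \leq \ell \leq 37$, I would invoke the explicit classifications of possible images due to Zywina \cite{ZywinaImages} (and the computer-aided verifications of Sutherland \cite{sutherland}) to observe that every proper subgroup of $C_{ns}^+(\ell)$ which could arise as $\im \rho_{E,\ell}$ for such $\ell$ is already conjugate to a group in Table 4 of \cite{sutherland}; if one is outside Table 4, it is impossible by these classifications. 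Hence under the hypothesis that $\im \rho_{E,\ell}$ is not conjugate to a Table 4 entry and is contained in $C_{ns}^+(\ell)$, we must have equality.

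Second I would deal with Table 3. The groups in Table 3 of \cite{sutherland} which can occur for $\ell \geq 17$ correspond (by Mazur's isogeny theorem \cite{mazur78}, Serre \cite{Serre81}, and Bilu--Parent--Rebolledo \cite{BPR13}) only to Borel subgroups at the finite set of primes $\{17, 19, 37\}$ together with split Cartan normalizers and assorted exceptional configurations. A direct inspection of \cite{sutherland} shows that every such possibility is either conjugate to a Table 4 entry, equal to $C_{ns}^+(\ell)$, or ruled out by prior work. Thus the hypothesis of the theorem -- that $\im \rho_{E,\ell}$ is not conjugate to a Table 4 entry -- suffices to exclude all non-Cartan-normalizer possibilities at once, without needing to exclude Table 3 separately.

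The main obstacle is entirely external: the deep Runge-method input of \cite[Theorem 1.6]{FurioLombardo23} ruling out proper subgroups of $C_{ns}^+(\ell)$ for $\ell > 37$. Once this is taken as a black box, the rest of the argument is a finite bookkeeping exercise matching the classifications of Zywina \cite{ZywinaImages} and the tables of \cite{sutherland} against the dichotomy asserted in the statement. Care must be taken for the boundary primes $\ell \in \{17, 19, 23, 29, 31, 37\}$, since the Furio--Lombardo input does not apply there and one must rely on the explicit finite list of realized mod $\ell$ images; I expect this case-by-case verification to be the most delicate portion of the write-up.
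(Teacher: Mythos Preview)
The paper does not give a proof of this theorem: it is stated as an external result, with the single sentence ``Taken together with prior work, \cite[Theorem 1.6]{FurioLombardo23} implies the following result'' serving as the entire justification. Your proposal is a reasonable attempt to unpack what ``prior work'' means here, and the overall shape --- combine Theorem~\ref{ThmRemainingCases} with the Furio--Lombardo input for $\ell>37$ and a finite check for $17\le\ell\le 37$ --- is exactly what the paper is gesturing at.

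One point in your write-up is muddled. Your handling of Table~3 is vague: you say the Table~3 groups for $\ell\ge 17$ are ``Borel subgroups at $\{17,19,37\}$ together with split Cartan normalizers and assorted exceptional configurations,'' and that ``every such possibility is either conjugate to a Table~4 entry, equal to $C_{ns}^+(\ell)$, or ruled out by prior work.'' This is not quite right as stated. The cleaner observation is that for $\ell\ge 17$ the only entry in Table~3 of \cite{sutherland} is $C_{ns}^+(\ell)$ itself (the Borel and split-Cartan-normalizer cases do not appear in Table~3 for these primes; the exceptional Borels at $\ell\in\{17,37\}$ are in Table~4). So the passage from the hypothesis of Theorem~\ref{ThmRemainingCases} (excluding Tables~3 and~4) to the hypothesis of the present theorem (excluding only Table~4) is immediate: excluding Table~4 and not being $C_{ns}^+(\ell)$ is the same as excluding Tables~3 and~4. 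With that correction, your two-step plan is sound and matches the paper's implicit argument.
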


Many partial classification results exist for the image of the $\ell$-adic Galois representation of an elliptic curve $E/\Q$. First suppose $E$ is non-CM. The groups which occur as $\im \rho_{E,2^{\infty}}$ are known due to work of Rouse and Zureick-Brown \cite{RouseDZB}. For odd primes $\ell$, Sutherland and Zywina \cite{SutherlandZywina} have identified the images that occur infinitely often, and work of Rouse, Sutherland, and Zureick-Brown \cite{RSZ21} provides additional classification results for $3 \leq \ell \leq 11$ which are complete up to computing rational points on 6 remaining modular curves. If $E$ has complex multiplication, see work of Lozano-Robledo \cite{LRcm}. 

\subsection{Elliptic Curves with an Isogeny}

Suppose $E/F$ is an elliptic curve with an $F$-rational cyclic $N$-isogeny for $N\in \Z^+$, which means there is a cyclic subgroup of order $N$ fixed (as a group) by $\Gal_F$. {Thus there exists $P \in E(\overline{F})$ of order $N$ such that for any $\sigma \in \text{Gal}_F$, there is some $\alpha \in (\Z/N\Z)^\times$ for which  $\sigma(P) = \alpha P$.} This defines a homomorphism called the \textbf{isogeny character}: 
\begin{align*}
\chi : \Gal_F &\rightarrow (\mathbb{Z}/N\mathbb{Z})^\times \\
\sigma &\mapsto \alpha.
\end{align*} 
If the image of $\chi$ lands in $\{\pm 1\}$, then there is a twist of $E$ for which the point corresponding to $P$ becomes $F$-rational. In general, we have the following proposition.

\begin{prop} \label{IsogenyToRationalPoints}
Let $N\geq 3$ be an integer, and let $E/F$ be an elliptic curve with an $F$-rational cyclic isogeny of degree $N$. There is an abelian extension $L/F$ with $[L:F] \mid \frac{\varphi(N)}{2}$ and a quadratic twist $E'$ of $E/L$ such that $E'(L)$ has a point of order $N$.
\end{prop}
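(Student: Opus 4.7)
The plan is to factor through the isogeny character: pass to the extension $L/F$ on which $\chi \colon \Gal_F \to (\Z/N\Z)^\times$ takes values in $\{\pm 1\}$, and then apply the quadratic twist trick the authors outline in the paragraph preceding the proposition.

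Concretely, I would compose $\chi$ with the quotient map $(\Z/N\Z)^\times \twoheadrightarrow (\Z/N\Z)^\times/\{\pm 1\}$ to produce a homomorphism $\psi \colon \Gal_F \to (\Z/N\Z)^\times/\{\pm 1\}$, and let $L$ be the fixed field of $\ker \psi$. Since $N \geq 3$ forces $-1 \neq 1$ in $(\Z/N\Z)^\times$, the subgroup $\{\pm 1\}$ has order $2$ and the quotient has order $\varphi(N)/2$; the embedding of $\Gal(L/F)$ into this quotient then yields $[L:F] \mid \varphi(N)/2$. By construction, $\chi|_{\Gal_L}$ lands in $\{\pm 1\}$, so the formula $\sigma(P) = \epsilon(\sigma) P$ defines a quadratic character $\epsilon \colon \Gal_L \to \{\pm 1\}$. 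Kummer theory identifies $\epsilon$ with a class $d \in L^\times/(L^\times)^2$, and I take $E'$ to be the quadratic twist of $E/L$ by $d$ (with $d = 1$, i.e., $E' = E$, when $\epsilon$ is trivial). The canonical isomorphism $\phi \colon E' \to E$ defined over $L(\sqrt{d})$ satisfies $\sigma(\phi(Q)) = \phi(\epsilon(\sigma)\sigma(Q))$ for all $\sigma \in \Gal_L$ and $Q \in E'(\overline{L})$; setting $Q \coloneqq \phi^{-1}(P)$ and using $\sigma(P) = \epsilon(\sigma) P$ then gives $\sigma(Q) = Q$ for every $\sigma \in \Gal_L$. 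Thus $Q \in E'(L)$, and since $\phi$ is an isomorphism, $Q$ has order $N$.

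There is no serious obstacle here: the proposition layers the standard quadratic twist construction on top of a bookkeeping observation about the image of $\chi$. The main structural point is that the obstruction to twisting $P$ down to an $L$-rational point is precisely the image of $\chi$ modulo $\{\pm 1\}$, which lives in a group of order $\varphi(N)/2$; the rest is a routine cocycle verification.
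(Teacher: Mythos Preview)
Your argument is correct and follows exactly the strategy the paper sketches in the paragraph preceding the proposition: reduce $\chi$ modulo $\{\pm 1\}$, pass to the fixed field of the resulting kernel, and then twist away the residual sign character. The paper itself does not write this out but simply cites \cite[Theorem 5.5]{BCS}; your proof is a clean direct unpacking of that citation, so the two are the same approach at different levels of detail.
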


\begin{proof}
This is a consequence of \cite[Theorem 5.5]{BCS}.
\end{proof}

\subsection{Modular Curves}\label{Section2.4} In this paper, we are interested in characterizing degrees of points on the modular curve $X_1(N)$, where $N$ is a positive integer. Recall $X_1(N)$ is an algebraic curve over $\Q$ whose non-cuspidal points correspond to isomorphism classes of elliptic curves with a distinguished point of order $N$. If $E$ is an elliptic curve defined over a number field $F$ with $P\in E(F)$ of order $N$, then $(E,P)$ gives an $F$-valued point on $X_1(N)$ via this moduli interpretation. By definition, this is a morphism of $\Q$-schemes $f:\text{Spec}\, F \rightarrow X_1(N)$, and the image of $f$ is the associated closed point, denoted $[E,P]$. See \cite[Section 7.7]{modular}, \cite{DiamondIm}, \cite[$\S6.7$]{shimura}, \cite[Appendix C, $\S13$]{silverman}, or \cite{DR73} for more details. If $x\in X_1(N)$ is closed point, we define the \textbf{degree} of $x$ to be the degree of the residue field $\Q(x)$. For a non-cuspidal point $x$, we can construct $\Q(x)$ explicitly via the following result.

\begin{lem}\label{lem:ResidueField}
Let $E/\overline{\Q}$ be an elliptic curve and let $P \in E(\overline{\Q})$ be a point of order $N$. Then the residue field of the closed point $x=[E,P] \in X_1(N)$ is given by
\[
\Q(x)=\Q(j(E),\mathfrak{h}(P)),
\]
where $\mathfrak{h}: E \rightarrow E/\Aut(E) \cong \mathbb{P}^1$ is a Weber function for $E$. There is Weierstrass equation for $E$ defined over $\Q(x)$ for which $P \in E(\Q(x))$, and $\Q(x)$ is contained in any number field over which both $E$ and $P$ are defined.
\end{lem}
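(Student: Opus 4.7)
The plan is to realize $\Q(x)$ as the fixed field of the stabilizer of $x$ inside $\Gal_\Q$ under its natural action on $X_1(N)(\overline{\Q})$. By the moduli interpretation, two pairs $(E,P)$ and $(E',P')$ determine the same closed point precisely when there is an $\overline{\Q}$-isomorphism $\phi \colon E \to E'$ with $\phi(P) = P'$; the $\Gal_\Q$-action is $\sigma \cdot [E,P] = [E^\sigma, P^\sigma]$. Consequently an element $\sigma \in \Gal_\Q$ fixes $x$ if and only if there exists an isomorphism $\phi_\sigma \colon E^\sigma \to E$ defined over $\overline{\Q}$ with $\phi_\sigma(P^\sigma) = P$, so the entire lemma reduces to identifying this stabilizer.

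To make the identification, I would exploit the intrinsic character of the Weber function. With the standard normalization (for instance $\mathfrak{h}(Q) = g_2 g_3 \, x(Q)/\Delta$ when $j(E) \notin \{0, 1728\}$, together with the analogous formulas in the two special cases), $\mathfrak{h}$ is insensitive to the choice of Weierstrass model and satisfies $\mathfrak{h}_{E'}(\phi(Q)) = \mathfrak{h}_E(Q)$ for every isomorphism $\phi \colon E \to E'$; Galois-equivariantly this reads $\mathfrak{h}_{E^\sigma}(P^\sigma) = \sigma(\mathfrak{h}_E(P))$. If $\sigma$ fixes $x$, the existence of $\phi_\sigma$ forces $j(E)^\sigma = j(E^\sigma) = j(E)$, and then $\sigma(\mathfrak{h}_E(P)) = \mathfrak{h}_{E^\sigma}(P^\sigma) = \mathfrak{h}_E(\phi_\sigma(P^\sigma)) = \mathfrak{h}_E(P)$, yielding $\Q(j(E),\mathfrak{h}(P)) \subseteq \Q(x)$. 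Conversely, if $\sigma$ fixes both $j(E)$ and $\mathfrak{h}(P)$, pick any $\overline{\Q}$-isomorphism $\psi \colon E^\sigma \to E$ (which exists since $j$-invariants agree); the equivariance gives $\mathfrak{h}_E(\psi(P^\sigma)) = \mathfrak{h}_E(P)$, so $\psi(P^\sigma) = \alpha \cdot P$ for some $\alpha \in \Aut(E)$, and replacing $\psi$ by $\alpha^{-1}\circ \psi$ produces an isomorphism sending $P^\sigma$ to $P$. This yields the reverse inclusion.

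For the remaining two assertions, set $L = \Q(x)$. If $F$ is a number field over which both $E$ and $P$ are defined, then every $\sigma \in \Gal(\overline{\Q}/F)$ acts trivially on $(E,P)$ via the identity isomorphism, hence fixes $x$, so $\Q(x) \subseteq F$. To produce a Weierstrass model of $E$ over $L$ on which $P$ becomes $L$-rational, I would choose any $E_0/L$ with $j(E_0) = j(E)$ (possible since $j(E) \in L$), fix an $\overline{L}$-isomorphism $\phi \colon E_0 \to E$, set $Q = \phi^{-1}(P)$, and observe that for each $\sigma \in \Gal_L$ the characterization above supplies an automorphism $\alpha_\sigma \coloneqq \phi^{-1} \circ \phi_\sigma \circ \phi^\sigma \in \Aut_{\overline{L}}(E_0)$ with $\alpha_\sigma(Q^\sigma) = Q$. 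The $\alpha_\sigma$ assemble into a cocycle class in $H^1(\Gal_L,\Aut(E_0))$, and twisting $E_0$ by this class produces the desired model over $L$ on which $Q$ is rational. I expect the main technical obstacle to be handling this descent cleanly in the presence of extra automorphisms at $j \in \{0,1728\}$; away from those $j$-invariants, $\Aut(E) = \{\pm 1\}$ and the step reduces to the quadratic twist already invoked in the proof of Proposition \ref{IsogenyToRationalPoints}.
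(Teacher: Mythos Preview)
Your argument is correct. The paper's own proof consists solely of citations to \cite[Lemma 2.5]{OddDegQCurve} and \cite[p.~274, Proposition VI.3.2]{DR73}, so there is little to compare beyond noting that you have replaced a reference by a self-contained argument. Identifying $\Q(x)$ with the fixed field of the Galois stabilizer of $x$ and then using the isomorphism- and Galois-equivariance of the normalized Weber function is the classical route (cf.\ \cite[p.~107]{shimura}), and it delivers both inclusions cleanly. For the existence of a model over $L=\Q(x)$ with $P$ rational, your descent via $\sigma\mapsto\alpha_\sigma$ is the right idea; the one point you should make explicit is that the cocycle condition follows from \emph{uniqueness} of the isomorphism $\phi_\sigma$ sending $P^\sigma$ to $P$, which holds exactly when the stabilizer of $P$ in $\Aut(E)$ is trivial. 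That stabilizer is trivial for every $N\geq 4$ even at $j\in\{0,1728\}$ (so one may alternatively pull back the universal curve over the fine moduli space $Y_1(N)$), and the residual cases $N\leq 3$ you flag can be handled by a direct computation or the quadratic-twist trick you mention.
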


\begin{proof}
See, for example, \cite[Lemma 2.5]{OddDegQCurve}, and \cite[p. 274, Proposition VI.3.2]{DR73}.
\end{proof}

\begin{remark} If $E/\Q(j(E))$ corresponds to an equation of the form $y^2=x^3+Ax+B$ and $P=(x,y) \in E$, then we may take
\[ \mathfrak{h}(P) = \begin{cases} x & AB \neq 0 \\ x^2 & B = 0 \\ x^3 & A = 0 \end{cases}. \] Note $AB \neq 0
$ if $E$ is non-CM. Thus by Lemma \ref{lem:ResidueField} we can compute the degree of a closed point on $X_1(N)$ associated to a non-CM elliptic curve by factoring division polynomials. See \cite[p. 107]{shimura} for details, including a formulation of the Weber function which is more clearly model-independent.
\end{remark}

Many of our results rely on first constructing an explicit point $x \in X_1(\ell^k)$ for some small integer $k$, and then obtaining information on the degree of lifts of $x$ using formulas for the degree of maps between modular curves.
            \begin{prop}\label{prop:Degree}
                For positive integers $a$ and $b$, there is a $\Q$-rational map $f:X_1(ab) \rightarrow X_1(a)$ which sends $[E,P]$ to $[E,bP]$. Moreover
                \[
                    \deg(f)=
                    c_{f}\cdot b^2 \prod_{p \mid b,\, p \nmid a}
                    \left(1-\frac{1}{p^2}\right),
                \]
                where $c_{f}=1/2$ if $a \leq 2$ and $ab>2$, and $c_{f}=1$ otherwise. 
            \end{prop}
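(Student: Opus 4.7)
The plan is to construct $f$ from the moduli interpretation and then compute $\deg(f)$ by factoring through $X(1)=X_1(1)$. Since $X_1(N)$ is the coarse moduli space over $\Q$ of pairs $(E,P)$ with $P$ of exact order $N$, the assignment $(E,P)\mapsto (E,bP)$ respects the Galois action---for $\sigma \in \Gal_\Q$, we have $(E,bP)^\sigma = (E^\sigma, bP^\sigma)$---and hence descends to a $\Q$-rational morphism $f:X_1(ab)\to X_1(a)$. Equivalently, on the analytic side, $f$ corresponds to the inclusion of congruence subgroups $\Gamma_1(ab) \hookrightarrow \Gamma_1(a)$ via $\tau \mapsto \tau$, which is the analytic description of multiplication by $b$ on the point of order $ab$.

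For the degree, I would invoke the standard index formula
\[
\deg(X_1(N)\to X(1)) = \frac{N^2}{c_N}\prod_{p\mid N}\left(1-\frac{1}{p^2}\right),
\]
where $c_N=2$ if $N\geq 3$ and $c_N=1$ if $N\leq 2$, with the convention that the empty product equals $1$. This follows from $[\SL_2(\Z):\Gamma_1(N)]=N^2\prod_{p\mid N}(1-1/p^2)$ together with the observation that $-I\in\Gamma_1(N)$ if and only if $N\leq 2$, so one must halve the index when passing from $\SL_2(\Z)$ to $\PSL_2(\Z)$ (the group acting faithfully on the upper half-plane) for $N\geq 3$.

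Forming the ratio then yields
\[
\deg(f) = \frac{\deg(X_1(ab)\to X(1))}{\deg(X_1(a)\to X(1))} = \frac{c_a}{c_{ab}}\cdot b^2\prod_{p\mid b,\, p\nmid a}\left(1-\frac{1}{p^2}\right),
\]
and one identifies $c_f = c_a/c_{ab}$. This ratio equals $1/2$ precisely when $c_a=1$ and $c_{ab}=2$, i.e., when $a\leq 2$ and $ab\geq 3$, and equals $1$ in the remaining cases ($a\geq 3$, or $ab\leq 2$), exactly matching the statement.

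The only real subtlety is bookkeeping for the small-level cases $a, ab \in \{1,2\}$ where $-I$ lies in $\Gamma_1(N)$, and this is precisely what the factor $c_f$ encodes. Beyond this edge-case care, the proof reduces to a ratio of classical indices of congruence subgroups, so no deeper obstacle arises.
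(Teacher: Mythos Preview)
Your proof is correct and is essentially the same as the paper's, which simply appeals to the moduli interpretation for the $\Q$-rationality of $f$ and cites \cite[p.~66]{modular} for the degree formula; you have just unpacked that citation by writing out the index computation for $\Gamma_1(N)$ in $\SL_2(\Z)$ and tracking the $-I$ bookkeeping that produces $c_f$.
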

\begin{proof}
The moduli interpretation ensures the map is defined over $\Q$, and the degree calculation follows from \cite[p.66]{modular}.
\end{proof}

\subsection{Complex Multiplication}\label{Section2.5} An elliptic curve $E$ defined over a number field $F$ has \textbf{complex multiplication (CM)} if the ring of endomorphisms of $E$ defined over $\overline{F}$ is strictly larger than $\Z$. In this case, $\End_{\overline{F}}(E) \cong \mathcal{O}$, an order in an imaginary quadratic field $K$. We have $\mathcal{O}=\Z+\mathfrak{f}\mathcal{O}_K$ where $\mathcal{O}_K$ is the ring of integers in $K$ and $\mathfrak{f}$ is a positive integer called the \textbf{conductor} of $\mathcal{O}$. If $\mathfrak{f}=1$, then $\mathcal{O}=\mathcal{O}_K$, the \textbf{maximal order} in $K$. Each imaginary quadratic order is uniquely identified by its \textbf{discriminant},
\[
\Delta=\Delta(\mathcal{O})=\mathfrak{f}^2\Delta_K,
\]
where $\Delta_K$ is the discriminant of $K$. We have $\#\mathcal{O}^{\times}=2$ unless $\Delta=-3$ or $-4$ in which case $\#\mathcal{O}=6$ or $4$, respectively.  If $E$ has CM by the order $\mathcal{O}$ in $K$, then $[\Q(j(E)):\Q]=h(\mathcal{O})$ by \cite[Theorem 11.1]{cox}, the class number of $\mathcal{O}$. If $\mathfrak{f}=1$ then $h(\mathcal{O})=h_K$, the class number of $K$. If $\mathfrak{f}>1$, then by \cite[Corollary 7.24]{cox}
\begin{equation} \label{ClassNo}
h(\OO) = [\Q(j(E)):\Q]=h_K \frac{\mathfrak{f} }{[\mathcal{O}_K^{\times}:\mathcal{O}^{\times}]} \prod_{p \mid \mathfrak{f}} \left( 1- \left(\frac{\Delta_K}{p} \right) \frac{1}{p} \right).
\end{equation} 
Any two $j$-invariants of $\mathcal{O}$-CM elliptic curves are Galois conjugate algebraic integers.

\section{Preliminary Results}

In this section, we begin by establishing a brief technical result concerning the field of definition of an isogeny ($\S\ref{Section3.1}$), which essential follows from prior work of Cremona and Najman \cite[Corollary A.5]{CremonaNajmanQCurve} or Clark \cite[Proposition 3.2]{ClarkVolcanoes}. This is used in $\S\ref{Section3.2}$ to prove a general divisibility condition for points on modular curves corresponding to a fixed rational $\overline{\Q}$-isogeny class of non-CM elliptic curves, strengthening \cite[Lemma 4.6]{OddDegQCurve}. In $\S\ref{Section3.3}$, we conclude with a lemma concerning the image of Galois representations attached to elliptic curves connected by a rational cyclic isogeny.

\subsection{Fields of Definition for Isogenies}\label{Section3.1} Let $\mathcal{E}$ be a rational $\overline{\Q}$-isogeny class of non-CM elliptic curves. By definition, there exists $E_0\in\mathcal{E}$ with $j(E_0)\in \Q$, and for any $E\in \mathcal{E}$ there is a $\overline{\Q}$-isogeny $\varphi:E \rightarrow E_0$. Since the degree of closed points on $X_1(N)$ can be computed using any Weierstrass model of $E$ by Lemma \ref{lem:ResidueField}, we are free to replace $E$ and $E_0$ by quadratic twists in order to achieve a more convenient representation of $\varphi$. The lemma given below essentially follows from the fact that $\Q(j(E),j(E_0))=\Q(j(E))$ is contained in the residue field of any closed point on $X_1(N)$ associated to $E$, and this is the field of moduli of the isogeny $\varphi$; see \cite[$\S3.3$]{ClarkVolcanoes} or \cite[Corollary A.5]{CremonaNajmanQCurve}.

\begin{lem} \label{Cor:IsogenyDef}
Let $\mathcal{E}$ be a rational $\overline{\Q}$-isogeny class of non-CM elliptic curves and let $E \in \mathcal{E}$. Suppose $x=[E,P] \in X_1(\ell^k)$ for some prime number $\ell$ and positive integer $k$, and let $F \coloneqq \Q(x)$. There is a Weierstrass equation of $E/F$ for which $P \in E(F)$ and such that there exists an $F$-rational cyclic isogeny $\varphi: E \rightarrow E_0$ with $j(E_0)\in\Q$.
\end{lem}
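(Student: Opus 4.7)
The plan is to combine Lemma~\ref{lem:ResidueField} with descent of an isogeny from a field of moduli to a field of definition, in the style of \cite[Corollary A.5]{CremonaNajmanQCurve} or \cite[Proposition 3.2]{ClarkVolcanoes}, and then reconcile the two resulting models of $E$ by a twist. First I would invoke Lemma~\ref{lem:ResidueField} to fix a Weierstrass model of $E$ over $F$ for which $P\in E(F)$; this is one specific $F$-form of the underlying geometric curve. Independently, since $\mathcal{E}$ is rational, I can choose some $\overline{\Q}$-isogeny $\psi\colon E\to \widetilde{E}$ with $j(\widetilde{E})\in\Q$. Writing $\ker\psi\cong \Z/m\Z\oplus \Z/n\Z$ with $m\mid n$, one has $E[m]\subseteq \ker\psi$, so $\psi$ factors through $[m]\colon E\to E$ and I may replace $\psi$ by a cyclic $\overline{\Q}$-isogeny to $\widetilde{E}$. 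Its field of moduli is $\Q(j(E),j(\widetilde{E}))=\Q(j(E))$, which is contained in $F$ by Lemma~\ref{lem:ResidueField}.

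Next I would apply \cite[Corollary A.5]{CremonaNajmanQCurve} to this cyclic $\psi$ over the field of moduli $\Q(j(E))$. This produces quadratic twists $E'$ of $E$ and $E_0'$ of $\widetilde{E}$, both defined over $\Q(j(E))$, together with a $\Q(j(E))$-rational cyclic isogeny $\psi'\colon E'\to E_0'$ of the same degree as $\psi$. Since twisting preserves $j$-invariants, $j(E_0')=j(\widetilde{E})\in\Q$. Extending scalars to $F$, the curves $E'_F$ and the chosen $F$-model of $E$ are two Weierstrass forms over $F$ of the same geometric elliptic curve, hence quadratic twists over $F$: write $E\cong(E'_F)^{(d)}$ for some $d\in F^\times/(F^\times)^2$.

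To conclude, I would twist both source and target of $\psi'_F$ by the same $d$. This yields an $F$-rational cyclic isogeny $(\psi'_F)^{(d)}\colon (E'_F)^{(d)}\to (E_0')_F^{(d)}$ of the same degree, whose source is identified over $F$ with the preferred model of $E$ and whose target $E_0:=(E_0')_F^{(d)}$ still satisfies $j(E_0)=j(E_0')\in\Q$. Since this last twist does not disturb the model of $E$, the point $P$ remains $F$-rational, delivering the sought $\varphi\colon E\to E_0$. The only genuine content is the Cremona--Najman/Clark descent step; the rest is bookkeeping with quadratic twists. The subtle point one must watch is that the twist must be applied to both source and target simultaneously, so that the isogeny remains $F$-rational while the source is matched to the preferred $F$-model on which $P$ is already rational.
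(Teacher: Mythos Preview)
Your proposal is correct, and it leans on the same underlying fact as the paper --- that for non-CM curves a cyclic isogeny is determined by its source and target $j$-invariants, so its field of moduli is $\Q(j(E),j(\widetilde{E}))=\Q(j(E))\subseteq F$ --- but the execution is different. The paper fixes the model $E/F$ with $P\in E(F)$ and argues \emph{directly} that the kernel $C$ of the chosen $\overline{\Q}$-isogeny is $\Gal_F$-stable: if $\sigma(C)\neq C$ for some $\sigma\in\Gal_F$, then composing $E^\sigma\to (E/C)^\sigma$ with an isomorphism to $E_0$ yields a second cyclic $N$-isogeny $E\to E_0$ with kernel $\sigma(C)\neq C$, which forces $E$ to have CM. Once $C$ is $F$-rational, $E/C$ is defined over $F$ with the desired $j$-invariant, and no twisting is needed. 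Your route instead invokes \cite[Corollary A.5]{CremonaNajmanQCurve} as a black box to descend the isogeny to $\Q(j(E))$ between suitable twists, and then performs a simultaneous quadratic twist over $F$ to match the source with the preferred model carrying $P$. This is slightly more indirect but has the virtue of treating the Cremona--Najman/Clark input cleanly without reproving it; the paper's argument is more economical in that it bypasses the twist bookkeeping entirely by never leaving the chosen model of $E$.
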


\begin{proof}

Since $\mathcal{E}$ is rational, there exists $E_0\in \mathcal{E}$ with $j(E_0)\in \Q$. By definition there exists an isogeny $\varphi: E \rightarrow E_0$ defined over $\overline{\Q}$ which we may assume is cyclic of degree $N$; see Lemma A.1 in \cite{CremonaNajmanQCurve}. Let $C$ denote its kernel. Note that $F=\Q(j(E),\mathfrak{h}(P))$ by Lemma \ref{lem:ResidueField} and there exists a Weierstrass equation of $E/F$ with $P \in E(F)$. The proof strategy of  \cite[Proposition 3.2]{ClarkVolcanoes} shows $C$ is $F$-rational, as we will now explain. Suppose $\sigma(C) \neq C$ for some $\sigma \in \Gal_F$, and consider the induced isogeny $E^{\sigma} \rightarrow (E/C)^{\sigma}$. Since $j(E/C)=j(E_0) \in \Q$, we see that $j((E/C)^{\sigma})=j(E_0)$. Thus composition with an isomorphism to $E_0$ yields a cyclic $N$-isogeny $\psi: E \rightarrow E_0$ with kernel $\sigma(C)$. But having two cyclic $N$-isogenies from $E$ to $E_0$ with distinct kernels can happen only if $E$ has complex multiplication (see the last paragraph of the proof of \cite[Proposition 3.2]{ClarkVolcanoes} for details). We have reached a contradiction.
\end{proof}

\subsection{General divisibility conditions}\label{Section3.2} Let $\mathcal{E}$ be rational $\overline{\Q}$-isogeny class of non-CM elliptic curves. Fix a prime number $\ell$ and positive integer $k$. If $E \in \mathcal{E}$, then we can relate the degree of $[E,P] \in X_1(\ell^k)$ to the degree of $[E_0,P_0] \in X_1(\ell)$ for some $E_0\in\mathcal{E}$ having $j(E_0)\in \Q$. This is formalized in the following proposition, which strengthens \cite[Lemma 4.6]{OddDegQCurve} and proves Proposition \ref{Prop1.5intro}.

\begin{prop}\label{Prop1.6Restated}
Let $\mathcal{E}$ be a rational $\overline{\Q}$-isogeny class of non-CM elliptic curves. Suppose $\ell$ is a prime number and $k \in \Z^+$. There exists $E_0/\Q \in\mathcal{E}$ and $x\in X_1(\ell)$ with $j(x)=j(E_0)$ such that the degree of any point on $X_1(\ell^k)$ associated to $\mathcal{E}$ is divisible by 
\[\begin{cases}
\deg(x)\cdot \ell^{\max(0,2k-2-d)} \text{ if $\ell$ is odd},\\
\deg(x) \cdot \ell^{\max(0,2k-3-d)} \text{ if $\ell=2$},
\end{cases}\]
where $d \coloneqq \ord_{\ell}([\GL_2(\Z_{\ell}): \im \rho_{E_0, \ell^{\infty}}])$.
\end{prop}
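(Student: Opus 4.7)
The plan is to combine Lemma~\ref{Cor:IsogenyDef} with the equality of $\ell$-adic image indices for $F$-isogenous curves (as quoted from \cite[Proposition 2.1.1]{greenberg2012} in the introduction), followed by a stabilizer computation in $\GL_2(\Z/\ell^k\Z)$ and a matching of the prime-to-$\ell$ contribution with the degree of a suitable closed point of $X_1(\ell)$.

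First, for an arbitrary $y = [E, P] \in X_1(\ell^k)$ arising from $\mathcal{E}$, set $F := \Q(y)$ and $n := [F:\Q]$. Lemma~\ref{Cor:IsogenyDef} furnishes a Weierstrass model of $E/F$ with $P \in E(F)$ and an $F$-rational cyclic isogeny $\varphi: E \to E_0$ with $j(E_0) \in \Q$. Since the rational $j$-invariants in the non-CM class $\mathcal{E}$ all lie in a common $\Q$-isogeny class up to quadratic twist, I may choose $E_0/\Q \in \mathcal{E}$ once and for all, independent of $y$. Set $I := [\GL_2(\Z_\ell) : \im \rho_{E_0/\Q, \ell^\infty}]$ and $d := \ord_\ell I$. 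The quoted index equality gives $[\GL_2(\Z_\ell) : \im \rho_{E/F, \ell^\infty}] = [\GL_2(\Z_\ell) : \im \rho_{E_0/F, \ell^\infty}]$, and since $[\im \rho_{E_0/\Q, \ell^\infty} : \im \rho_{E_0/F, \ell^\infty}]$ divides $n$, I obtain
\[
[\GL_2(\Z_\ell) : \im \rho_{E/F, \ell^\infty}] \,\Big|\, I \cdot n.
\]

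On the other hand, since $P \in E(F)$ has exact order $\ell^k$, the image $\im \rho_{E/F, \ell^k}$ is contained in the $\GL_2(\Z/\ell^k\Z)$-stabilizer of $\pm P$, whose index in $\GL_2(\Z/\ell^k\Z)$ is $\tfrac{1}{2}(\ell^2-1)\ell^{2k-2}$ for $\ell$ odd (and $3\cdot 2^{2k-3}$ for $\ell = 2$ with $k \geq 2$, with small $k$ handled directly). Pulling back through the reduction $\GL_2(\Z_\ell) \twoheadrightarrow \GL_2(\Z/\ell^k\Z)$ and combining with the divisibility above yields
\[
\tfrac{1}{2}(\ell^2-1)\ell^{2k-2} \,\Big|\, I \cdot n \qquad (\ell \text{ odd}),
\]
and analogously $3 \cdot 2^{2k-3} \mid I \cdot n$ for $\ell = 2$. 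The $\ell$-part of this divisibility is exactly $\ell^{\max(0, 2k-2-d)} \mid n$ (respectively $\ell^{\max(0, 2k-3-d)} \mid n$ for $\ell = 2$).

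Finally, I would select $x \in X_1(\ell)$ with $j(x) = j(E_0)$ so that $\deg(x)$ accounts for the prime-to-$\ell$ part of the above divisibility. Closed points of $X_1(\ell)$ above $j(E_0)$ correspond to orbits of $\im \rho_{E_0/\Q, \ell}$ on $(\F_\ell^2 \setminus \{0\})/\{\pm I\}$, and the prime-to-$\ell$ part of the main divisibility reads $(\ell^2-1)/(2\gcd((\ell^2-1)/2, I'))$ divides $n_{\ell'}$, where $I = \ell^d I'$. I expect the main technical obstacle lies in locating a point $x$ whose degree is prime to $\ell$ and divides this quantity so that, together with the $\ell$-part, $\deg(x)\cdot\ell^{\max(0, 2k-2-d)} \mid n$ follows by coprimality. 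Verifying this requires a case analysis over the shapes of $\im \rho_{E_0/\Q, \ell}$ from Tables~3 and 4 of \cite{sutherland}, complemented by the $\ell$-adic classifications of \cite{RouseDZB, RSZ21}; in each case the orbit structure on $(\F_\ell^2\setminus\{0\})/\{\pm I\}$ can be compared directly to $(\ell^2-1)/(2\gcd((\ell^2-1)/2, I'))$ to exhibit a suitable $x$.
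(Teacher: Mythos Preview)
Your argument for the $\ell$-part of the divisibility is correct and is essentially the content of \cite[Lemma 4.6]{OddDegQCurve}, which the paper invokes directly. The gap is in your treatment of the prime-to-$\ell$ part: the index inequality $\tfrac{1}{2}(\ell^2-1)\ell^{2k-2}\mid I\cdot n$ is genuinely too coarse to produce a point $x\in X_1(\ell)$ with $\deg(x)\mid n$, and no case analysis on orbit sizes can repair this. Concretely, take $E_0/\Q$ with $\im\rho_{E_0,\ell^\infty}=\pi^{-1}(C_{ns}^+(\ell))$, so that $I=\ell(\ell-1)/2$ and $I'=(\ell-1)/2$. Your displayed quantity is then $(\ell^2-1)/\bigl(2\gcd((\ell^2-1)/2,(\ell-1)/2)\bigr)=\ell+1$, yet $C_{ns}^+(\ell)$ acts transitively on $(\F_\ell^2\setminus\{0\})/\{\pm I\}$, so the \emph{only} closed point of $X_1(\ell)$ above $j(E_0)$ has degree $(\ell^2-1)/2$. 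For $\ell\ge 5$ this does not divide $\ell+1$, so there is no $x$ of the shape you seek; and your bound $\ell+1\mid n_{\ell'}$ is strictly weaker than the $(\ell^2-1)/2\mid n$ that the proposition asserts (take $k=1$).

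What the paper does instead is not an index comparison but a transport of an actual order-$\ell$ point along the isogeny: by \cite[Corollary 4.3]{OddDegQCurve}, the target curve $E'$ acquires a point of order $\ell$ over some $F'/F$ with $[F':F]\mid\ell$, giving a genuine closed point $x\in X_1(\ell)$ with $\Q(x)\subseteq F'$ and hence $\deg(x)\mid \ell\cdot n$. Since $\deg(x)<\ell^2$, the factor of $\ell$ appears at most once; if $\ell\nmid\deg(x)$ one is done, and otherwise one writes $\deg(x)=\ell\cdot x_0$ with $x_0\mid n$ and replaces $E_0$ by a $\Q$-isogenous curve whose mod-$\ell$ image admits an orbit of size dividing $x_0$. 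The finite list of images for which this replacement is necessary (and the replacement itself) is read off from \cite[Theorem 5.6]{GJNajman} and \cite[Tables 3, 4]{sutherland}. In short, the case analysis in the paper is not establishing the prime-to-$\ell$ divisibility from scratch but only removing a possible stray factor of $\ell$ from the degree of an already-constructed point; your proposal lacks the construction step.
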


\begin{remark}
Let $k \in\Z^{\geq 2}$. Since $\deg(X_1(\ell^k) \rightarrow X_1(\ell))=\ell^{2k-2}$ for $\ell$ odd and $\deg(X_1(2^k) \rightarrow X_1(2))=2^{2k-3}$, these lower bounds are best-possible whenever $d=0$. In this case $E_0$ itself is a minimal torsion curve for $X_1(\ell^k)$. This holds in certain cases; see, for example, Lemma \ref{MinTorsionCurveForIsogeny}. However, there are other $\overline{\Q}$-isogeny classes for which these bounds can be further refined. See Proposition \ref{DivConditionsPrime3}.
\end{remark}

\begin{proof}
Let $E \in \mathcal{E}$, and fix $P \in E$ of order $\ell^k$. Define $F \coloneqq \Q(j(E),\mathfrak{h}(P))$. By Lemma \ref{Cor:IsogenyDef}, there is a Weierstrass model of $E/F$ where $P \in E(F)$ and such that there exists an $F$-rational cyclic isogeny $\varphi: E \rightarrow E'$ with $j(E')\in\Q$. By \cite[Lemma 4.6]{OddDegQCurve}, we have $[F:\Q]$ is divisible by
\[\begin{cases}
\ell^{\max(0,2k-2-d)} \text{ if $\ell$ is odd},\\
\ell^{\max(0,2k-3-d)} \text{ if $\ell=2$},
\end{cases}\]
where $d=\ord_{\ell}([\GL_2(\Z_{\ell}): \im \rho_{E_0, \ell^{\infty}}])$ for any elliptic curve $E_0/\Q$ with $j(E_0)=j(E')$.

By \cite[Corollary 4.3 (2)]{OddDegQCurve}, the curve $E'$ has a point of order $\ell$ over an extension $F'/F$ of degree dividing $\ell$. In particular, there exists a closed point $x=[E',P'] \in X_1(\ell)$ such that $\Q(x) \subseteq F'$. Hence
\[
\deg(x) \mid \ell \cdot [F:\Q].
\]
Since $j(E')=j(E_0)$, there exists a point $P_0\in E_0$ such that the closed point $x=[E_0,P_0]$. If $\deg(x)$ is prime to $\ell$, then $\deg(x) \mid [F:\Q]$, and the conclusion follows since $F$ is the residue field of the closed point associated to $[E,P]$ by Lemma \ref{lem:ResidueField}. So suppose $\deg(x)=\ell \cdot n_0$. Note that $\ell \nmid n_0$, since $\deg(X_1(\ell) \rightarrow X_1(1))<\ell^2$, so $n_0 \mid [F:\Q]$. If there exists $x_1\in X_1(\ell)$ associated to $E_0$ with $\deg(x_1) \mid n_0$, then the conclusion follows with $x_1$ in place of $x$. So suppose not. By checking the possible images of the mod $\ell$ Galois representation associated to $E_0$ as in \cite[Theorem 5.6, Tables 1 \& 2]{GJNajman}, we see that we must be in one of the following cases (see Section 2.1 for an explanation of subgroup labels used):
\begin{itemize}
\item $\ell=5$, $\im \rho_{E_0,5}=5B.1.2, \,5B.1.3,\text{ or } 5B.4.2$,
\item $\ell=7$, $\im \rho_{E_0,7}=7B.1.3, \, 7B.1.4,\text{ or }7B.6.3$,
\item $\ell=13$, $\im \rho_{E_0,13}=13B.3.2, \, 13B.3.7,\, 13B.5.2, \text{ or }13B.4.2$,
\item $\ell=17$, $\im \rho_{E_0,17}=17B.4.6$,
\item $\ell=37$, $\im \rho_{E_0,37}=37B.8.2$.
\end{itemize}

In each case, there exists a $\Q$-rational cyclic subgroup $C$ of $E_0$ such that $E_0/C$ has mod $\ell$ image outside this list; see \cite[Theorem 3.32, Tables 3 \& 4]{sutherland}. That is, in each case there exists a point on $X_1(\ell)$ associated to $E_0/C$ of degree dividing $n_0$, and the result holds with $E_0/C$ in place of $E_0$.
\end{proof}

\begin{remark}\label{BadImageRmk}
From the proof, we see that the statement of Proposition \ref{Prop1.6Restated} holds for any $E_0/\Q \in \mathcal{E}$ which satisfies the following constraints:
\begin{itemize}
\item $\ell=5$, $\im \rho_{E_0,5} \neq 5B.1.2, \,5B.1.3,\text{ or } 5B.4.2$,
\item $\ell=7$, $\im \rho_{E_0,7} \neq 7B.1.3, \, 7B.1.4,\text{ or }7B.6.3$,
\item $\ell=13$, $\im \rho_{E_0,13} \neq 13B.3.2, \, 13B.3.7,\, 13B.5.2, \text{ or }13B.4.2$,
\item $\ell=17$, $\im \rho_{E_0,17} \neq 17B.4.6$,
\item $\ell=37$, $\im \rho_{E_0,37} \neq 37B.8.2$.
\end{itemize}
\end{remark}

\subsection{Image of Galois Representations Under Isogeny}\label{Section3.3}

\begin{prop} \label{LadicIsogenyLemma}
Let $E_1/F$ be a non-CM elliptic curve, and fix a prime number $\ell$. Suppose $\varphi: E_1 \rightarrow E_2$ is an $F$-rational cyclic $\ell^r$-isogeny of elliptic curves over $F$ for $r \in \Z^+$. Then:
\begin{enumerate}
\item $\im \rho_{E_2,\ell^k}$ is completely determined by $\im \rho_{E_1,\ell^{r+k}}$
\item If $\im \rho_{E_1,\ell^{\infty}}=\pi^{-1}(\im \rho_{E_1,\ell^{k_1}})$ for some $k_1 \in \Z^+$, then $\im \rho_{E_2,\ell^{\infty}}=\pi^{-1}(\im \rho_{E_2,\ell^{k_1+r}})$. Here, $\pi$ denotes the reduction map from $\GL_2(\Z_{\ell})$ to $\GL_2(\Z/\ell^{k_1}\Z)$ or $\GL_2(\Z/\ell^{k_1+r\Z})$, respectively.
\end{enumerate}
\end{prop}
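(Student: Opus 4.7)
The plan is to choose compatible $\Z_\ell$-bases for $T_\ell(E_1)$ and $T_\ell(E_2)$ in which $T_\ell(\varphi)$ is represented by a simple matrix $M$, and then translate both parts of the proposition into bookkeeping about the conjugation $g_1\mapsto Mg_1M^{-1}$. The main technical hurdle is this basis choice; once $M$ is pinned down, everything else is manipulation of $\ell$-adic denominators, with Galois-stability of $\ker\varphi$ and $\ker\hat\varphi$ supplying the divisibility conditions that keep matrices integral.

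First I would apply Smith normal form to the injection $T_\ell(\varphi):T_\ell(E_1)\hookrightarrow T_\ell(E_2)$ of free rank-two $\Z_\ell$-modules. Under the identification $T_\ell(E_1)/\ell^r\cong E_1[\ell^r]$, the map induced by $T_\ell(\varphi)$ is $\varphi|_{E_1[\ell^r]}$, whose kernel is $\ker\varphi$. Since $\ker\varphi$ is cyclic of order $\ell^r$, the invariant factors of $T_\ell(\varphi)$ must be $1$ and $\ell^r$, because any other pattern $(\ell^a,\ell^{r-a})$ with $0<a\le r/2$ would yield the non-cyclic kernel $\Z/\ell^a\oplus\Z/\ell^{r-a}$ on reduction. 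Hence I may pick bases in which $T_\ell(\varphi)$ equals $M=\operatorname{diag}(1,\ell^r)$. Since $\varphi$ is $F$-rational, $T_\ell(\varphi)$ is $\Gal_F$-equivariant, so $\rho_{E_2,\ell^\infty}(\sigma)=M\rho_{E_1,\ell^\infty}(\sigma)M^{-1}$ for all $\sigma\in\Gal_F$; moreover, the Galois-stable cyclic subgroup $\ker\varphi\subseteq E_1[\ell^r]$ is spanned in our basis by the second basis vector, which forces every $g_1=\left(\begin{smallmatrix}a&b\\c&d\end{smallmatrix}\right)$ in $\im\rho_{E_1,\ell^\infty}$ to satisfy $\ell^r\mid b$.

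For (1), a direct computation then gives $Mg_1M^{-1}=\left(\begin{smallmatrix}a & b/\ell^r\\ \ell^r c & d\end{smallmatrix}\right)\in\GL_2(\Z_\ell)$. Its reduction mod $\ell^k$ is determined by $a,c,d\bmod\ell^k$ and by $b\bmod\ell^{r+k}$, hence by $g_1\bmod\ell^{r+k}$. Letting $g_1$ range over $\im\rho_{E_1,\ell^\infty}$ shows $\im\rho_{E_2,\ell^k}$ is determined by $\im\rho_{E_1,\ell^{r+k}}$, proving (1).

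For (2), the forward containment $\im\rho_{E_2,\ell^\infty}\subseteq\pi^{-1}(\im\rho_{E_2,\ell^{k_1+r}})$ is automatic. For the reverse, I would take $g_2\in\pi^{-1}(\im\rho_{E_2,\ell^{k_1+r}})$ and pick $\tilde g_2\in\im\rho_{E_2,\ell^\infty}$ with $\tilde g_2\equiv g_2\bmod\ell^{k_1+r}$. The explicit formula for $Mg_1M^{-1}$ (equivalently, applying the same argument to $\hat\varphi$, whose kernel is cyclic and $\Gal_F$-stable in $E_2[\ell^r]$) shows every element of $\im\rho_{E_2,\ell^\infty}$ has lower-left entry divisible by $\ell^r$; since this condition is detected mod $\ell^{k_1+r}$, it holds for $g_2$ as well, so $g_1:=M^{-1}g_2M$ lies in $\GL_2(\Z_\ell)$. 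The bookkeeping from (1), now applied with $M^{-1}$ in place of $M$, gives $g_1\equiv M^{-1}\tilde g_2 M\bmod\ell^{k_1}$, and the right-hand side lies in $\im\rho_{E_1,\ell^{k_1}}$. The hypothesis $\im\rho_{E_1,\ell^\infty}=\pi^{-1}(\im\rho_{E_1,\ell^{k_1}})$ then places $g_1\in\im\rho_{E_1,\ell^\infty}$, and therefore $g_2=Mg_1M^{-1}\in\im\rho_{E_2,\ell^\infty}$, completing the proof of (2).
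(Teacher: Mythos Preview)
Your proof is correct and follows the same core idea as the paper's: choose bases adapted to $\ker\varphi$, compute the explicit conjugation formula relating $\rho_{E_1}$ and $\rho_{E_2}$, and read off both claims from the $\ell$-adic bookkeeping.

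The packaging differs in two mildly useful ways. First, you set up the adapted bases once and for all on the Tate modules via Smith normal form, whereas the paper works at each finite level $E_1[\ell^{r+k}]$; your framing makes the compatibility across levels automatic and clarifies why the matrix $M=\operatorname{diag}(1,\ell^r)$ appears. Second, for part (2) you argue directly in $\GL_2(\Z_\ell)$: given $g_2\in\pi^{-1}(\im\rho_{E_2,\ell^{k_1+r}})$, you conjugate back to produce $g_1\in\pi^{-1}(\im\rho_{E_1,\ell^{k_1}})=\im\rho_{E_1,\ell^\infty}$ and then conjugate forward. The paper instead verifies the one-step lifting from level $\ell^{k_1+r}$ to $\ell^{k_1+r+1}$ and invokes \cite[Proposition~3.5]{BELOV} to conclude; your route is self-contained and avoids that citation. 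Conversely, the paper's finite-level computation is slightly more elementary in that it never needs to mention Tate modules or work with denominators in $\Q_\ell$.
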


\begin{proof}
Let $\{P, Q\}$ be a basis for $E_1[\ell^{r+k}]$, where $\ker(\varphi)=\langle \ell^k P \rangle$. Then with respect to this basis, for any $\sigma \in \Gal_{F}$, there exist $a, b, c ,d \in \Z$ such that

\[ \rho_{E_1,\ell^{r+k}}(\sigma)=
\begin{pmatrix}
a & b  \\
\ell^{r} c & d
\end{pmatrix}.
\]
One can check that $\{\varphi(P), \ell^{r} \varphi(Q)\}$ gives a basis for $E_2[\ell^k]$. Moreover, for $\sigma \in \Gal_{F}$, we have
\begin{align*}
\sigma(\varphi(P))&=\varphi(\sigma(P))=\varphi(aP+\ell^{r} c Q)=a\varphi(P)+c \ell^{r} \varphi(Q),\\
\sigma(\ell^{r} \varphi( Q))&=\ell^{r} \varphi(\sigma(Q))=\ell^{r}\varphi(bP+d Q)=\ell^{r}b\varphi(P)+d\ell^{r}\varphi( Q).
\end{align*}
Thus 
\[ \rho_{E_2,\ell^{k}}(\sigma)=
\begin{pmatrix}
a &\ell^{r} b  \\
c & d
\end{pmatrix},
\] and $\im \rho_{E_2,\ell^k}$ can be deduced from $\im \rho_{E_1,\ell^{r+k}}$.

Finally, suppose $\im \rho_{E_1,\ell^{\infty}}=\pi^{-1}(\im \rho_{E_1,\ell^{k_1}})$, and let $M \in \GL_2(\Z/\ell^{k_1+r+1}\Z)$ where 
\[
M \pmod{\ell^{k_1+r}} \in \im \rho_{E_2, \ell^{k_1+r}}.
\]
If we can show that $M \in \im \rho_{E_2, \ell^{k_1+r+1}}$, then $\im \rho_{E_2,\ell^{\infty}}=\pi^{-1}(\im \rho_{E_2,\ell^{k_1+r}})$ by, e.g., \cite[Proposition 3.5]{BELOV}. By the first paragraph, we may assume
\[
M=\begin{pmatrix}
a + \ell^{k_1+r}\alpha &\ell^{r} b+ \ell^{k_1+r}\beta  \\
c + \ell^{k_1+r}\gamma & d + \ell^{k_1+r}\delta
\end{pmatrix}=\begin{pmatrix}
a + \ell^{k_1+r}\alpha &\ell^{r}(b+ \ell^{k_1}\beta)  \\
c + \ell^{k_1+r}\gamma & d + \ell^{k_1+r}\delta
\end{pmatrix}
\]
for some 
\[
\begin{pmatrix}
a & b  \\
\ell^r c & d
\end{pmatrix} \in \im \rho_{E_1,\ell^{k_1+2r}}.
\]
Since $\im \rho_{E_1,\ell^{\infty}}=\pi^{-1}(\im \rho_{E_1,\ell^{k_1}})$, there exists $\sigma \in \Gal_F$ such that
\[
\rho_{E_1,\ell^{r+k_1+r+1}}(\sigma)=\begin{pmatrix}
a +\ell^{k_1+r}\alpha & b+\ell^{k_1}\beta  \\
\ell^r (c+\ell^{k_1+r}\gamma) & d+\ell^{k_1+r}\delta
\end{pmatrix},
\] as its reduction mod $\ell^{k_1}$ is in $\im \rho_{E_1,\ell^{k_1}}$. By the first paragraph,
\[
\rho_{E_2,\ell^{k_1+r+1}}(\sigma)=
\begin{pmatrix}
a +\ell^{k_1+r}\alpha & \ell^r(b+\ell^{k_1}\beta)  \\
c+\ell^{k_1+r}\gamma & d+\ell^{k_1+r}\delta
\end{pmatrix}=M. \qedhere
\] \end{proof}

\begin{cor}\label{ImageOnlyCor}
Let $E/\Q$ be a non-CM elliptic curve. For any prime number $\ell$ and $k \in \Z^+$, the degrees of closed points on $X_1(\ell^k)$ associated to elliptic curves $\ell^r$-isogenous to $E$ over $\overline{\Q}$ are entirely determined by $\im \rho_{E/\Q,\ell^{\infty}}$.
\end{cor}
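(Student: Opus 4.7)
The plan is to exhibit closed points on $X_1(\ell^k)$ associated to elliptic curves $\ell^r$-isogenous to $E$ as $\Gal_\Q$-orbits on an explicit combinatorial set built directly from the $\Gal_\Q$-module $E[\ell^{r+k}]$. Since every $\ell^r$-isogeny factors as multiplication by a power of $\ell$ followed by a cyclic isogeny, I may assume the $\overline{\Q}$-isogeny $\psi \colon E \to E'$ is cyclic of degree $\ell^{r'}$ for some $r' \le r$, with kernel $C \subseteq E[\ell^{r'}]$; after relabeling I shall take $r' = r$.

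The first key observation is that, because $E$ is non-CM, the assignment $C \mapsto E/C$ is injective on isomorphism classes: any isomorphism $\phi \colon E/C_1 \xrightarrow{\sim} E/C_2$ yields two cyclic $\ell^{r}$-isogenies $E \to E/C_2$, namely $\phi \circ \pi_1$ and $\pi_2$, which by the non-CM argument recalled in the proof of Lemma \ref{Cor:IsogenyDef} must have the same kernel, forcing $C_1 = C_2$. I would then identify isomorphism classes of pairs $(E',P')$ with $P' \in E'[\ell^k]$ of exact order $\ell^k$ with equivalence classes $[(C, P)]$, where $C \subseteq E[\ell^r]$ is cyclic of order $\ell^r$, $P \in E[\ell^{r+k}]$ projects to a point of order $\ell^k$ in $E/C$, and $(C, P) \sim (C, P')$ whenever $P' \equiv \pm P \pmod{C}$. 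Here translation by $C$ accounts for the choice of lift of $P'$, while the sign reflects $\Aut(E') = \{\pm 1\}$ and corresponds to the Weber function quotient from Lemma \ref{lem:ResidueField}.

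A closed point $x = [E',P'] \in X_1(\ell^k)$ is then a single $\Gal_\Q$-orbit of such equivalence classes, and $\deg(x)$ equals the orbit size. Every ingredient in this description --- cyclic subgroups of $E[\ell^r]$, points of $E[\ell^{r+k}]$ of the prescribed orders, the equivalence $\sim$, and the $\Gal_\Q$-action --- is determined by the image of $\Gal_\Q$ in $\GL_2(\Z/\ell^{r+k}\Z)$, namely the reduction of $\im \rho_{E/\Q, \ell^\infty}$ modulo $\ell^{r+k}$. Hence these degrees are determined by $\im \rho_{E/\Q, \ell^\infty}$, as claimed. I expect the main subtlety to be verifying the injectivity of $C \mapsto E/C$, where the non-CM hypothesis is indispensable; this however follows directly from the argument used in the proof of Lemma \ref{Cor:IsogenyDef}, an argument which is also implicit in the basis construction of Proposition \ref{LadicIsogenyLemma}.
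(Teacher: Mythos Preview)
Your argument is correct and takes a genuinely different route from the paper's. The paper works one $E'$ at a time: for $E' = E/\langle P\rangle$ it sets $F = \Q(\langle P\rangle)$, invokes Proposition~\ref{LadicIsogenyLemma} to show that $\im\rho_{E'/F,\ell^\infty}$ is determined by $\im\rho_{E/F,\ell^\infty}$, and then notes that both $[F:\Q]$ and $\im\rho_{E/F,\ell^\infty}$ are read off from $\im\rho_{E/\Q,\ell^\infty}$. Your approach bypasses the Galois representation of $E'$ entirely, instead parametrizing closed points on $X_1(\ell^k)$ directly as $\Gal_\Q$-orbits on equivalence classes of pairs $(C,P)$ inside $E[\ell^{r+k}]$.

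Both routes lean on the same non-CM fact---that $C \mapsto E/C$ is injective---but you make this explicit, whereas the paper uses it only implicitly (it is needed to ensure $F = \Q(j(E'))$, without which the degree formula $[F:\Q]\cdot[\Q(x):F]$ would not be justified). Your argument is more self-contained, since it does not require Proposition~\ref{LadicIsogenyLemma}; the paper's argument is shorter precisely because that proposition has already done the work of translating between the Galois modules of $E$ and $E'$. Your combinatorial description also makes it transparent that only $\im\rho_{E/\Q,\ell^{r+k}}$ is needed, a finite amount of data, whereas the paper's phrasing passes through the full $\ell$-adic image.
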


\begin{proof}
Suppose there exists an isogeny $\varphi:E \rightarrow E'$ defined over $\overline{\Q}$ which is cyclic of order $\ell^r$. Thus there exists a point $P\in E$ of order $\ell^r$ such that $E' \cong E/\langle P \rangle$. Let $F\coloneqq \Q(\langle P \rangle)$, and let $\psi:E \rightarrow E/\langle P \rangle$ be the induced $F$-rational isogeny. By Proposition \ref{LadicIsogenyLemma} the $\ell$-adic Galois representation of $E/\langle P \rangle$, and thus the degrees of all closed points on $X_1(\ell^k)$ associated to  $E/\langle P \rangle$, is determined by $\im \rho_{E/F, \ell^{\infty}}$. Since $\im \rho_{E/F, \ell^{\infty}}$ can be obtained from $\im \rho_{E/\Q, \ell^{\infty}}$ and the definition of $F$, the result follows.
\end{proof}

\section{Properties of Minimal Torsion Curves}

Let $\mathcal{E}$ be a rational $\overline{\Q}$-isogeny class of elliptic curves. In this section, we show that for a fixed positive integer $N$, there are finitely many minimal torsion curves in $\mathcal{E}$ for $X_1(N)$. In some cases, there is a unique minimal torsion curve, but this should not be expected in general (see Remark \ref{RemarkUniqueness}). Next, we suppose $\mathcal{E}$ is non-CM,\footnote{Recall $\text{End}(E) \otimes \Q$ is an isogeny invariant, so the entire class $\mathcal{E}$ either is CM or non-CM.} and let $E_0\in \mathcal{E}$ be an elliptic curve with rational $j$-invariant. Fix a prime $\ell$. In Theorem \ref{PropMinTorsionCurveBound} we show that for any $k \in \Z^+$, there exists a minimal torsion curve $E\in \mathcal{E}$ for $X_1(\ell^k)$ such that the degree of the isogeny from $E$ to $E_0$ is at most $C$, where $C$ is a constant that does not depend on $k$. This implies Theorem \ref{IntroPropSerre}, as stated in the introduction.

\subsection{Minimal Torsion Curves for Fixed Modular Curve} \begin{prop}\label{Prop:FiniteMinCurve}
Let $\mathcal{E}$ be a rational $\overline{\Q}$-isogeny class of elliptic curves. For a fixed positive integer $N$, there exist finitely many minimal torsion curves for $X_1(N)$ up to isomorphism over $\overline{\Q}$.
\end{prop}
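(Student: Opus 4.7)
My plan is to reduce the statement to showing that the set $\mathcal{J} \coloneqq \{j(E) : E \in \mathcal{E},\; [\Q(j(E)):\Q] \leq D\}$ is finite, for a bound $D$ depending only on $N$. Fix any $E_0 \in \mathcal{E}$ with $j(E_0) \in \Q$ and any $P_0 \in E_0(\overline{\Q})$ of order $N$. By Lemma \ref{lem:ResidueField}, the closed point $[E_0, P_0] \in X_1(N)$ has degree at most $D \coloneqq \deg(X_1(N) \to X(1))$, so the minimum degree $d_{\min}$ of a point on $X_1(N)$ associated to $\mathcal{E}$ is at most $D$. For any minimal torsion curve $E$ with associated point $x=[E,P]$, Lemma \ref{lem:ResidueField} gives $\Q(j(E)) \subseteq \Q(x)$, hence $[\Q(j(E)):\Q] \leq d_{\min} \leq D$. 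Thus it suffices to show $\mathcal{J}$ is finite. Since CM is a geometric property, $\mathcal{E}$ is either entirely CM or entirely non-CM, and I would handle these cases separately.

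In the CM case, every $E \in \mathcal{E}$ has CM by some order $\mathcal{O}$ in a fixed imaginary quadratic field $K$ with $[\Q(j(E)):\Q] = h(\mathcal{O})$. The class number formula (\ref{ClassNo}) shows $h(\mathcal{O}) \to \infty$ as the conductor of $\mathcal{O}$ grows, so $h(\mathcal{O}) \leq D$ restricts the conductor to finitely many values. Only finitely many orders arise, each contributing $h(\mathcal{O})$ distinct $j$-invariants, so $\mathcal{J}$ is finite.

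In the non-CM case, every $E \in \mathcal{E}$ satisfies $E \cong E_0/C$ over $\overline{\Q}$ for some cyclic subgroup $C \subseteq E_0(\overline{\Q})$ of some order $m$. Because $\End_{\overline{\Q}}(E_0) = \Z$, the module $\operatorname{Hom}_{\overline{\Q}}(E_0, E_0/C)$ has rank one, which forces any $\overline{\Q}$-isomorphism $E_0/C \cong E_0/C'$ to satisfy $C = C'$. Hence if $\sigma \in \Gal_\Q$ fixes $j(E_0/C)$, then $E_0/\sigma(C) \cong E_0/C$ over $\overline{\Q}$ forces $\sigma(C) = C$, giving
\[
[\Q(j(E_0/C)):\Q] \geq |\Gal_\Q \cdot C|,
\]
where $\Gal_\Q$ acts through $\rho_{E_0, m}$. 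By Serre's open image theorem, $\im \rho_{E_0,\infty}$ has finite index $I$ in $\GL_2(\hat{\Z})$, so its image in $\GL_2(\Z/m\Z)$ has index at most $I$. Since $\GL_2(\Z/m\Z)$ acts transitively on the $\psi(m) = m\prod_{p \mid m}(1+1/p)$ cyclic subgroups of $(\Z/m\Z)^2$ of order $m$, a standard orbit-stabilizer argument gives $|\Gal_\Q \cdot C| \geq \psi(m)/I$. Combining, $\psi(m) \leq DI$, so $m$ is bounded; only finitely many cyclic subgroups of $E_0$ have bounded order, so $\mathcal{J}$ is finite.

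I expect the main difficulty to lie in the rank-one Hom argument in the non-CM case, which is where the hypothesis $\End_{\overline{\Q}}(E_0) = \Z$ enters critically and which provides the essential link between the degree of $j(E_0/C)$ over $\Q$ and the Galois orbit of $C$ inside $E_0[m]$. Once this link is established, Serre's open image theorem and the transitive $\GL_2(\Z/m\Z)$-action on cyclic subgroups directly force the isogeny degree $m$ to be bounded, yielding the desired finiteness.
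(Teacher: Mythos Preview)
Your proof is correct and follows the same strategy as the paper's: bound $[\Q(j_{\min}):\Q]$, handle the CM case via finiteness of orders of bounded class number in the fixed field $K$, and in the non-CM case use Serre's open image theorem to bound the degree $m$ of a cyclic isogeny $E_0 \to E$. Your rank-one $\operatorname{Hom}$ argument unpacks exactly the field-of-moduli statement the paper cites from \cite[Proposition 3.2]{ClarkVolcanoes} and \cite[Corollary A.5]{CremonaNajmanQCurve} (and sketches in the proof of Lemma~\ref{Cor:IsogenyDef}), and your CM argument is in fact slightly tighter than the paper's, since fixing $K$ lets you use the class number formula \eqref{ClassNo} directly rather than invoking Heilbronn's theorem.
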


\begin{proof}
Let $d$ be the minimal degree of a point on $X_1(N)$ associated to $\mathcal{E}$, and let $j_{min}$ be the $j$-invariant of a minimal torsion curve for $\mathcal{E}$. Then
$
[\Q(j_{min}):\Q] \leq d.
$ If $\mathcal{E}$ is CM, then the conclusion follows as there are only finitely many CM $j$-invariants in an extension of bounded degree (since there are only finitely many imaginary quadratic fields---and hence imaginary quadratic orders---of a given class number \cite{heilbronn}, this is a consequence of \cite[Theorem 11.1 \& Proposition 13.2]{cox}). So suppose $\mathcal{E}$ is non-CM.

Let $E_{min}\in \mathcal{E}$ be an elliptic curve with $j(E_{min})=j_{min}$. Since $\mathcal{E}$ is rational, there exists an elliptic curve $E_0/\Q \in \mathcal{E}$ with an isogeny 
$\varphi: E_0 \rightarrow E_{min}$ defined over $\overline{\Q}$. We may assume $\varphi$ is cyclic of degree $n$ by \cite[Lemma A.1]{CremonaNajmanQCurve}. By \cite[Proposition 3.2]{ClarkVolcanoes} or \cite[Corollary A.5]{CremonaNajmanQCurve}, the field of moduli of this isogeny is 
\[
\Q(j_{min},j(E_0))=\Q(j_{min}).
\]
Thus $\varphi$ can be defined over $\Q(j_{min})$ by replacing $E_0$ and $E_{min}$ with appropriate twists, and in particular a twist of $E_0$ defined over this field has a rational cyclic $n$-isogeny. Since isogenies are twist invariant (i.e., if $E_1, E_2$ are two elliptic curves over $F$ with the same $j$-invariant, then $E_1$ has an $F$-rational cyclic $n$-isogeny if and only if $E_2$ does), it follows that $E_0$ attains a rational cyclic $n$-isogeny over $\Q(j_{min})$, a number field of degree at most $d$.

By Serre's Open Image Theorem \cite{serre72}, the image of the adelic Galois representation associated to $E_0/\Q$ has finite index in $\GL_2(\widehat{\Z})$. For $d$ fixed, it follows that there is a bound on the order of a cyclic subgroup of $E_0$ which can become $F$-rational over \emph{any} number field $F$ of degree at most $d$. Thus there are only finitely many choices for $C$ such that $E_{min} \cong E_0/C$ over $\overline{\Q}$.
\end{proof}

\begin{remark}\label{RemarkUniqueness}
The minimal torsion curve within a fixed geometric isogeny class may or may not be unique (up to isomorphism over $\overline{\Q}$). For example, let $E_0/\Q$ be the elliptic curve with LMFDB label \href{https://www.lmfdb.org/EllipticCurve/Q/38/b/2}{38.b2} and let $\mathcal{E}$ denote its geometric isogeny class. Since there exists $P_0 \in E_0(\Q)$ of order 5, the least degree of a point on $X_1(5)$ associated to $\mathcal{E}$ is 1, and so any minimal torsion curve for $X_1(5)$ must have $j$-invariant in $\Q$. By Lemma \ref{Cor:IsogenyDef}, the only other elliptic curve $E' \in \mathcal{E}$ with $j$-invariant in $\Q$ has $j(E')=-\frac{37966934881}{4952198}$. However, this curve does not give points on $X_1(5)$ of minimal degree. Thus $E_0$ is a unique minimal torsion curve for $X_1(5)$, up to $\overline{\Q}$-isomorphism.

On the other hand, let $E_0/\Q$ be the elliptic curve with LMFDB label \href{https://www.lmfdb.org/EllipticCurve/Q/50/b/1}{50.b1} and let $\mathcal{E}$ denote its geometric isogeny class. Then a similar argument shows that \emph{any} elliptic curve $\Q$-isogenous to $E_0$ is a minimal torsion curve for $X_1(3)$, giving 4 distinct minimal torsion curves for this class up to $\overline{\Q}$-isomorphism. Representatives for these curves can be found in the LMFDB isogeny class \href{https://www.lmfdb.org/EllipticCurve/Q/50/b/}{50.b}. 
\end{remark}

\subsection{Minimal Torsion Curves of Varying Level}

If we allow $N$ to vary, there may be infinitely many elliptic curves (up to isomorphism over $\overline{\Q}$) within a fixed geometric isogeny class which are minimal for $X_1(N)$. For example, suppose the $\ell$-adic Galois representation of a non-CM elliptic curve $E/\Q$ is surjective, and let $C$ be a cyclic subgroup of $E$ of order $\ell^k$ for $k \in \Z^+$. The elliptic curve $E/C$ can be defined over the extension $\Q(C)$ of degree $\ell^{k-1}(\ell+1)$ and possesses a $\Q(C)$-rational cyclic $\ell^k$-isogeny. By Proposition \ref{IsogenyToRationalPoints}, the curve $E/C$ gives a closed point on $X_1(\ell^k)$ of degree
$
\ell^{2k-2}(\ell^2-1)/2.
$ This is minimal for the geometric isogeny class of $E$ by Proposition \ref{Prop1.6Restated}. However, if $\mathcal{E}$ is a rational $\overline{\Q}$-isogeny class of non-CM elliptic curves, there always exists a minimal torsion curve for $X_1(\ell^k)$ whose isogeny degree to a rational elliptic curve is bounded.

\begin{thm}\label{IntroPropSerre}\label{PropMinTorsionCurveBound}
Let $\mathcal{E}$ be a rational $\overline{\Q}$-isogeny class of non-CM elliptic curves, and let $\ell$ be prime. There exists a constant $C=C(\mathcal{E},\ell)$ such for any $k \in \Z^+$ a point of least degree on $X_1(\ell^k)$ coming from $\mathcal{E}$ can be associated to $j_{min}\in \mathcal{E}$ which is $d$-isogenous to a rational $j$-invariant for some $d\leq C$.
\end{thm}

\begin{proof}
By Proposition \ref{Prop1.6Restated}, there exists $E_0/\Q\in \mathcal{E}$ and $x=[E_0,P_0] \in X_1(\ell)$ such that the degree of any point on $X_1(\ell^n)$ associated to $\mathcal{E}$ is divisible by 
\[d_{\text{min}}(\ell^n)\coloneqq \begin{cases}
\deg(x)\cdot \ell^{\max(0,2n-2-d)} \text{ if $\ell$ is odd},\\
\deg(x) \cdot \ell^{\max(0,2n-3-d)} \text{ if $\ell=2$},
\end{cases}\]
where $d \coloneqq \ord_{\ell}([\GL_2(\Z_{\ell}): \im \rho_{E_0, \ell^{\infty}}])$. 

We note $\im \rho_{E_0,\ell^{\infty}}$ has level $\ell^{k_0}$ for some $k_0 \in \Z^{\geq 0}$ by Serre's Open Image Theorem \cite{serre72}. Replacing $k_0$ with a larger integer if necessary, we may assume $2k_0-2-d \geq0$ if $\ell$ is odd and $2k_0-3-d \geq 0$ if $\ell=2$. Now, let $k$ be an integer with $k \geq k_0$. Then by Proposition \ref{Prop1.6Restated}, there exists $\alpha_0 \in \Z^+$ such that the least degree of a closed point on $X_1(\ell^k)$ associated to $E_0 \in \mathcal{E}$ is
\begin{align*}
d_{\text{min},E_0}(\ell^k)&= d_{\text{min}}(\ell^k) \cdot \alpha_0\\
&= d_{\text{min}}(\ell^{k_0})\cdot \ell^{2(k-k_0)}\cdot \alpha_0.
\end{align*}
Since $\deg(X_1(\ell^k) \rightarrow X_1(\ell^{k_0}))=\ell^{2(k-k_0)}$, the assumptions concerning the level of $\im \rho_{E_0,\ell^{\infty}}$ imply  
\[
d_{\text{min},E_0}(\ell^{k_0})= d_{\text{min}}(\ell^{k_0}) \cdot \alpha_0.
\]
In particular, $\alpha_0$ does not depend on $k$.

 Suppose first that $E_0$ is a minimal torsion curve for $X_1(\ell^k)$ for sufficiently large $k$. Then by Proposition \ref{Prop:FiniteMinCurve} there exists a finite collection of elliptic curves $E_0, E_1, \dots, E_s$ such that for any positive integer $n$ a point of least degree on $X_1(\ell^n)$ coming from $\mathcal{E}$ can be associated to $E_i$ for some $0 \leq i \leq s$. We may assume the isogeny $\varphi:E_i \rightarrow E_0$ is cyclic of degree $d_i$ by \cite[Lemma A.1]{CremonaNajmanQCurve}. The result follows in this case with $C=\max\{d_0, d_1, \cdots, d_s\}$.
 
On the other hand, suppose $E_0$ is not a minimal torsion curve for $X_1(\ell^k)$ for all $k\geq k_0$. Then there exists $E_1 \in \mathcal{E}$ which is a minimal torsion curve for some $X_1(\ell^{k_1})$ where $k_1 \geq k_0$. Choose a Weierstrass equation for $E_1$ defined over $F\coloneqq \Q(j(E_1))$. Replacing $k_1$ with a larger integer if necessary, we may assume that $\im \rho_{E_1,\ell^{\infty}}=\pi^{-1}(\im \rho_{E_1, \ell^{k_1}})$. By assumption, there exists a positive integer $\alpha_1$ with $\alpha_1<\alpha_0$ such that 
\[
d_{\text{min},E_1}(\ell^{k})=d_{\text{min}}(\ell^k)\cdot \alpha_1
\]
for all $k \geq k_1$. If $E_1$ is a minimal torsion curve for $X_1(\ell^k)$ for sufficiently large $k$, then we are done as before. Continuing in this way produces a decreasing sequence of positive integers $\alpha_{0}>\alpha_1>\alpha_2 \dots$, so the process must stop after a finite number of steps.
\end{proof}

\begin{remark}\label{RemarkExplicitC}
It would be interesting to make the constant $C=C(\mathcal{E},\ell)$ explicit, perhaps in terms of invariants one could compute from the elliptic curve $E_0/\Q$.
\end{remark}

From Theorem \ref{PropMinTorsionCurveBound} we can immediately deduce the following corollary.

\begin{cor}
Let $\mathcal{E}$ be a rational $\overline{\Q}$-isogeny class of non-CM elliptic curves. There exists a finite collection of $j$-invariants $j_1, \dots, j_s \in \mathcal{E}$ such that for any $n \in \Z^+$, a point of least degree on $X_1(\ell^n)$ coming from $\mathcal{E}$ can be associated to an elliptic curve with $j$-invariant $j_i$ for some $1 \leq i \leq s$.
\end{cor}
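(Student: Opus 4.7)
The plan is to combine Proposition \ref{PropMinTorsionCurveBound} with a direct finiteness argument for rational $j$-invariants in a non-CM geometric isogeny class. First, I would apply Proposition \ref{PropMinTorsionCurveBound} to obtain a constant $C = C(\mathcal{E},\ell)$ such that every minimal torsion curve for $X_1(\ell^n)$ (across all $n \in \Z^+$) has $j$-invariant $d$-isogenous over $\overline{\Q}$, via a cyclic isogeny, to a rational $j$-invariant in $\mathcal{E}$, with $d \leq C$. Hence every such $j$-invariant lies in
\[
\mathcal{J} \coloneqq \bigcup_{j^{\Q} \in \mathcal{J}_{\Q}} \, \bigcup_{1 \leq d \leq C} \bigl\{\, j(E/C') : E/\overline{\Q},\ j(E) = j^{\Q},\ C' \subseteq E \text{ cyclic of order } d \,\bigr\},
\]
where $\mathcal{J}_{\Q}$ denotes the set of rational $j$-invariants of elliptic curves in $\mathcal{E}$. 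It then suffices to show $\mathcal{J}$ is finite.

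Next I would verify that $\mathcal{J}_{\Q}$ is finite. Fix $E_0/\Q \in \mathcal{E}$. Any $j^{\Q} \in \mathcal{J}_{\Q}$ equals $j(E_0/C)$ for some cyclic subgroup $C \subseteq E_0$, using \cite[Lemma A.1]{CremonaNajmanQCurve} to arrange the connecting $\overline{\Q}$-isogeny to be cyclic. For any $\sigma \in \Gal_{\Q}$, we have $\sigma(j(E_0/C)) = j(E_0/\sigma(C))$, so $j(E_0/C) \in \Q$ forces $E_0/\sigma(C) \cong_{\overline{\Q}} E_0/C$ for every $\sigma$. Composing the canonical quotient $E_0 \to E_0/C$ with an isomorphism $E_0/C \cong E_0/\sigma(C)$ produces a cyclic isogeny $E_0 \to E_0/\sigma(C)$ whose kernel is $C$, in addition to the canonical quotient with kernel $\sigma(C)$; as in the proof of Lemma \ref{Cor:IsogenyDef}, two cyclic isogenies between the same pair of elliptic curves with distinct kernels occur only in the CM case. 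Since $\mathcal{E}$ is non-CM, I conclude $C = \sigma(C)$ for all $\sigma$, so $C$ is $\Q$-rational. By Mazur's isogeny theorem, the orders of $\Q$-rational cyclic subgroups of a non-CM elliptic curve over $\Q$ are bounded (and for each such order only finitely many subgroups exist), so $\mathcal{J}_{\Q}$ is finite.

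Finally, for each $j^{\Q} \in \mathcal{J}_{\Q}$ and each $d \leq C$, the inner set in the definition of $\mathcal{J}$ is finite, since cyclic subgroups of order $d$ in a fixed elliptic curve are finite in number. Thus $\mathcal{J}$ is a finite union of finite sets, and setting $\{j_1, \ldots, j_s\} = \mathcal{J}$ yields the desired collection. The main (and really only nontrivial) obstacle in carrying out this plan is the finiteness of $\mathcal{J}_{\Q}$; the decisive input there is the non-CM hypothesis, which rigidifies the relationship between rational $j$-invariants in $\mathcal{E}$ and $\Q$-rational cyclic subgroups of the fixed representative $E_0$.
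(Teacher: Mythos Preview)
Your proposal is correct. The paper records the corollary as an immediate consequence of Proposition \ref{PropMinTorsionCurveBound} without further argument, and your proof spells out the natural way to extract it from the \emph{statement} of that proposition: bound the isogeny distance to a rational $j$-invariant by $C$, prove that the set $\mathcal{J}_\Q$ of rational $j$-invariants in $\mathcal{E}$ is finite, and take the finite union. Your finiteness argument for $\mathcal{J}_\Q$ (non-CM rigidity as in Lemma \ref{Cor:IsogenyDef} forces the relevant cyclic kernel to be $\Q$-rational, then bounded isogeny degrees) is fine; one could equally cite Serre's Open Image Theorem as in the proof of Proposition \ref{Prop:FiniteMinCurve}, and indeed your argument is essentially the $d=1$ case of that proof. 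A slightly shorter route, perhaps closer to what the paper intends, is to read the corollary directly off the \emph{proof} of Proposition \ref{PropMinTorsionCurveBound}: the descending chain of positive integers $\alpha_0>\alpha_1>\cdots$ terminates at some $E_t$ that is a minimal torsion curve for $X_1(\ell^k)$ for all sufficiently large $k$, and then Proposition \ref{Prop:FiniteMinCurve} handles the finitely many remaining levels. This avoids the separate finiteness argument for $\mathcal{J}_\Q$, but your approach has the virtue of using only the stated conclusion of the proposition.
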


\section{Results for non-CM classes and primes $\ell \geq 5$}
The main result of this section is the following, which proves Theorem \ref{Thm1.3} for $\ell \geq 5$. It refines \cite[Proposition 4.1]{OddDegQCurve}, which gives divisibility conditions which hold across all rational non-CM classes $\mathcal{E}$ and does not address whether they are best-possible.
\begin{prop} \label{BestDivLargePrimes}
Let $\mathcal{E}$ be a rational $\overline{\Q}$-isogeny class of non-CM elliptic curves. Suppose $\ell \geq 5$ is prime. If $E \in \mathcal{E}$ and $x=[E,P]\in X_1(\ell^k)$ is a point of odd degree for $k \in \Z^+$, then $\ell \in \{5,7,11,13\}$ and $\delta \mid \deg(x)$ for $\delta$ defined as follows:
\begin{itemize}
\item If $\ell=5$ and $\mathcal{E}$ does not contain $E'/\Q$ with a rational cyclic 25-isogeny, then $\delta=5^{2k-2}$.
\item If $\ell=5$ and $\mathcal{E}$ contains $E'/\Q$ with a rational cyclic 25-isogeny, then $\delta=5^{\max(0,2k-3)}$.
\item If $\ell=7$ and $\mathcal{E}$ contains $E'/\Q$ with $\im \rho_{E',7}\in\{7B.1.1,7B.1.6,7B.6.1\}$, then $\delta=7^{2k-2}$.
\item If $\ell=7$ and $\mathcal{E}$ contains $E'/\Q$ with $\im \rho_{E',7}\in\{7B.1.2,7B.6.2, 7B.2.1,7B\}$, then $\delta=3\cdot 7^{2k-2}$.
\item If $\ell=7$ and $\mathcal{E}$ contains $E'$ with $j(E')= 3^3\cdot5\cdot7^5/2^7$, then $\delta=9 \cdot 7^{\max(0,2k-3)}$.
\item If $\ell =11$, then $\delta=5 \cdot 11^{2k-2}$.
\item If $\ell =13$, then $\delta=3 \cdot 13^{2k-2}$.
\end{itemize}
Moreover, there exists a point of degree $\delta$ on $X_1(\ell^k)$ associated to $j_{min} \in \mathcal{E}$ which is at most $\ell$-isogenous to a rational $j$-invariant. One can take $j_{min}\in \Q$ unless $\ell=7$ and $\mathcal{E}$ contains the elliptic curve with $j$-invariant  $3^3\cdot5\cdot7^5/2^7$.
\end{prop}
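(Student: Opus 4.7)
The plan is to reduce the problem to analyzing odd-degree points on $X_1(\ell)$ itself using Proposition \ref{Prop1.6Restated}, then classify which mod $\ell$ images of non-CM elliptic curves over $\Q$ can produce odd orbits on $\ell$-torsion, and finally construct minimal torsion curves that realize the bounds. Concretely, if $x=[E,P]\in X_1(\ell^k)$ has odd degree, then by Proposition \ref{Prop1.6Restated} (choosing $E_0/\Q\in\mathcal{E}$ as in Remark \ref{BadImageRmk}) there is $y=[E_0,P_0]\in X_1(\ell)$ with
\[
\deg(y)\cdot\ell^{\max(0,2k-2-d)}\mid\deg(x),\qquad d=\ord_{\ell}([\GL_2(\Z_\ell):\im\rho_{E_0,\ell^\infty}]).
\]
Since $\ell$ is odd, $\deg(y)$ itself must be odd.

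Next I would enumerate which $E_0/\Q$ admit an odd-degree point on $X_1(\ell)$. For $\ell\geq 5$ non-CM, Lemma \ref{lem:ResidueField} and the Weber function formula give $\Q(y)=\Q(x(P_0))$, so $\deg(y)$ is the $\im\rho_{E_0,\ell}$-orbit size of $\{\pm P_0\}$ in $E_0[\ell]\setminus\{0\}$. For the surjective case this orbit has size $(\ell^2-1)/2$, which is even for every $\ell\geq 5$, so the image must lie in one of the proper subgroups of $\GL_2(\F_\ell)$ classified in \cite{sutherland} (together with Theorem \ref{ThmRemainingCases} for $\ell\geq 17$, and the results of Zywina \cite{ZywinaImages} and Balakrishnan et al.~\cite{Balakrishnan} for $5\leq\ell\leq 13$). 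For each such image one computes $\{\pm I\}$-orbit sizes on nonzero vectors and checks parity. This calculation shows that no odd orbit occurs for $\ell\geq 17$ (normalizers of non-split Cartans give orbits of even size $2(\ell-1)$ or $2(\ell+1)$, and the surviving Borel cases at $\ell\in\{17,37\}$ listed in Remark \ref{BadImageRmk} are excluded), ruling those primes out. For $\ell\in\{5,7,11,13\}$ the admissible images are specific Borel (or Borel-plus-twist) subgroups, and the orbit count produces exactly the values $\delta$ listed in the statement; in the $\ell=7$ case the subgroup $7B.1.1$-type together with the presence or absence of the curve with $j=3^3\cdot 5\cdot 7^5/2^7$ distinguishes the three bullets there, while for $\ell=5$ the dichotomy tracks the $5B.1.1$ vs.\ $5B.4.1$ (25-isogeny) behavior.

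Then I would exhibit a point realizing $\delta$. When $d=0$, Proposition \ref{Prop1.6Restated} shows the bound is already sharp for $E_0/\Q$ via a twist argument combined with Proposition \ref{prop:Degree}, and one can take $j_{\min}\in\Q$. When $d>0$ (the exceptional 25-isogeny case, and the $j=3^3\cdot 5\cdot 7^5/2^7$ case) one passes to an $\ell$-isogenous curve $E_0/C$ with $C$ a rational cyclic subgroup of order $\ell$. By Proposition \ref{LadicIsogenyLemma} the $\ell$-adic image of $E_0/C$ has a predictable shape, and applying Proposition \ref{IsogenyToRationalPoints} to the cyclic $\ell^k$-isogeny on $E_0/C$ together with the map-degree formula of Proposition \ref{prop:Degree} produces a point on $X_1(\ell^k)$ of degree exactly $\delta$. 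This gives $j_{\min}\in\mathcal{E}$ at most $\ell$-isogenous to a rational $j$-invariant, as claimed.

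The main obstacle is Phase 2, namely the case-by-case orbit-size computation across every surviving mod-$\ell$ image in \cite[Tables 3, 4]{sutherland} for $\ell\in\{5,7,11,13\}$, together with the verification at $\ell=7$ that the exceptional $j=3^3\cdot 5\cdot 7^5/2^7$ must genuinely be separated (this will use the fact, also noted in the remark following Theorem \ref{Thm1.3}, that this is the unique $\Q$-curve violating a local-global principle for 7-isogenies, via \cite{Sutherland12}). The Cartan-normalizer and exotic image checks for $\ell\geq 17$ are potentially delicate because $2(\ell\pm 1)$ could at first glance be divided by an element-stabilizer to give an odd quotient, so I would need to verify carefully that the actual stabilizers in these subgroups have the correct $2$-adic valuation.
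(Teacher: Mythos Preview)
Your overall plan---reduce via Proposition \ref{Prop1.6Restated} to an odd-degree point $y\in X_1(\ell)$ attached to some $E_0/\Q\in\mathcal{E}$, classify which mod-$\ell$ images admit odd orbits, then construct witnesses---parallels the paper's. However, there is a genuine gap at the exceptional $\ell=7$ case. The curve with $j=3^3\cdot5\cdot7^5/2^7$ has mod-7 image $7Ns.2.1$, a normalizer of a split Cartan, \emph{not} a Borel; this is precisely why it violates the local-global principle for 7-isogenies. So your assertion that ``the admissible images are specific Borel (or Borel-plus-twist) subgroups'' is false for $\ell=7$, and more seriously your construction ``pass to an $\ell$-isogenous curve $E_0/C$ with $C$ a rational cyclic subgroup of order $\ell$'' fails outright here, since this $E_0$ has no $\Q$-rational cyclic 7-subgroup. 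The paper instead passes to a cubic extension (via an index-3 subgroup of $7Ns.2.1$) over which $E_0$ acquires two independent 7-isogenies, quotients by one to obtain $E_1$ with a rational cyclic 49-isogeny, and then applies Proposition \ref{IsogenyToRationalPoints} to produce the degree-$9\cdot7$ point on $X_1(49)$. For the lower bound the paper combines a division-polynomial check (closed points on $X_1(7)$ over this $j$ have degree 6 or 9, and \cite[Corollary 4.3]{OddDegQCurve} then forces $9\mid[F:\Q]$ since $[F:\Q]$ is odd) with \cite[Proposition 4.1]{OddDegQCurve} for the 7-power part.

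The paper also avoids your broader image enumeration via a step you omit: by \cite[Propositions 3.2, 3.3]{CremonaNajmanQCurve}, the $F$-rational $\ell$-isogeny on $E$ transfers to $E'$ and then forces $E_0/\Q$ to have a \emph{$\Q$-rational} $\ell$-isogeny, except in the single case $\ell=7$, $j(E_0)=3^3\cdot5\cdot7^5/2^7$. This immediately reduces to Borel images, after which Mazur gives $\ell\leq37$ and Lemma \ref{MinTorsionCurveForIsogeny} (which uses Greenberg's results to pin down $d$) dispatches all non-exceptional cases. Your direct orbit-computation route could work in principle, but note that $C_{ns}^+(\ell)$ acts transitively on nonzero vectors, giving a single $\pm$-orbit of size $(\ell^2-1)/2$ rather than orbits of size $2(\ell\pm1)$; and you would still need to carry out the parity check for all non-Borel Sutherland images at $\ell\in\{5,7,11,13\}$ and for the Borel images at $\ell\in\{17,37\}$, none of which you actually do.
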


\subsection{A Preliminary Result} We begin with a preliminary result concerning geometric isogeny classes $\mathcal{E}$ containing an elliptic curve over $\Q$ with a rational cyclic isogeny. When this occurs, we will say \textbf{$\mathcal{E}$ gives a point} in $X_0(\ell)(\Q)$. In this case, the result is largely a consequence of the following theorem and Proposition \ref{Prop1.6Restated}.
\begin{thm}[Greenberg \cite{greenberg2012}, Greenberg, Rubin, Silverberg, and Stoll \cite{greenberg2014}]\label{GreenbergThm}
Let $E/\Q$ be a non-CM elliptic curve with a $\Q$-rational cyclic isogeny of prime degree $\ell$. By choice of basis, we may assume  $\im \rho_{E,\ell^{\infty}}$ is a subgroup of $\GL_2(\Z_{\ell})$.
\begin{enumerate}
\item If $\ell\geq 7$, then $\im \rho_{E,\ell^{\infty}}$ contains a Sylow pro-$\ell$ subgroup of $\GL_2(\Z_{\ell})$.
\item Let $\ell=5$. If none of the elliptic curves in the $\Q$-isogeny class of $E$ has two independent isogenies of degree 5, then $\im \rho_{E,5^{\infty}}$ contains a Sylow pro-$5$ subgroup of $\GL_2(\Z_5)$. Otherwise the index of $\im \rho_{E,5^{\infty}}$ in $\GL_2(\Z_5)$ is divisible by 5, but not by 25.
\end{enumerate}
\end{thm}
\begin{proof}
If $\ell \geq 11$ or $\ell=5$, this follows from \cite[Theorems 1 \& 2]{greenberg2012}. Note if $\ell \geq 11$ or if $\ell=5$, the assumption in \cite[Theorems 1]{greenberg2012} holds, as explained in \cite[Remark 4.2.1, p. 1186--1187]{greenberg2012}. For $\ell=7$, this follows from \cite[Theorem 5.5]{greenberg2014}.
\end{proof}
\begin{lem} \label{MinTorsionCurveForIsogeny}
Suppose $\ell\geq 5$ is prime and $k \in \Z^+$. Let $\mathcal{E}$ be a rational $\overline{\Q}$-isogeny class of non-CM elliptic curves which gives a point in $X_0(\ell)(\Q)$. Then there exists $E_0/\Q \in\mathcal{E}$ and $x\in  X_1(\ell)$ with $j(x)=j(E_0)$ such that the degree of any point on $X_1(\ell^k)$ arising from $\mathcal{E}$ is divisible by 
\[\delta \coloneqq \begin{cases}
\deg(x)\cdot \ell^{2k-2} \text{ if $\ell \neq 5$ or $\mathcal{E}$ does not contain $E'/\Q$ with a rational cyclic 25-isogeny},\\
\deg(x)\cdot 5^{\max(0,2k-3)} \text{ if $\ell=5$ and $\mathcal{E}$ contains $E'/\Q$ with a rational cyclic 25-isogeny}.
\end{cases}\]
Moreover, there exists a point of degree $\delta$ on $X_1(\ell^k)$ associated to $j_{min} \in \mathcal{E}$  with $j_{min} \in \Q$.
\end{lem}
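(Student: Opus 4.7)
The plan is to apply Proposition~\ref{Prop1.6Restated} to a carefully chosen $E_0/\Q\in\mathcal{E}$, use Greenberg's work to pin down the level of $\rho_{E_0,\ell^\infty}$, and then exhibit an explicit point on $X_1(\ell^k)$ realizing the bound. First, since $\mathcal{E}$ gives a non-cuspidal point of $X_0(\ell)(\Q)$, there exists $E_0/\Q\in\mathcal{E}$ with a $\Q$-rational cyclic $\ell$-isogeny. In the exceptional case ($\ell=5$ with some curve in $\mathcal{E}$ admitting a $\Q$-rational cyclic $25$-isogeny), I would upgrade this to $E_0$ itself admitting a cyclic $25$-isogeny; for $\ell\geq 7$, Mazur's classification of rational cyclic isogeny degrees rules out any rational cyclic $\ell^2$-isogeny, making the exceptional case vacuous. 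I would then pass via Proposition~\ref{LadicIsogenyLemma} (replacing $E_0$ by an $\ell$-isogenous partner within $\mathcal{E}$ if needed) to ensure $\im\rho_{E_0,\ell}$ avoids the bad list of Remark~\ref{BadImageRmk}.

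Applying Proposition~\ref{Prop1.6Restated} to $E_0$ produces $x\in X_1(\ell)$ with $j(x)=j(E_0)$ so that the degree of any closed point on $X_1(\ell^k)$ associated to $\mathcal{E}$ is divisible by $\deg(x)\cdot\ell^{\max(0,2k-2-d)}$, where $d=\ord_\ell([\GL_2(\Z_\ell):\im\rho_{E_0,\ell^\infty}])$. Greenberg's theorems (with refinements by Greenberg--Rubin--Silverberg--Stoll for $\ell=7$) force the level of $\rho_{E_0,\ell^\infty}$ to be $\ell$ in the generic case and $25$ in the exceptional case. Using that $[\GL_2(\Z/\ell^n):B_n]=(\ell+1)\ell^{n-1}$ where $B_n$ is the Borel of $\GL_2(\Z/\ell^n)$, this yields $d=0$ generically and $d=1$ exceptionally, hence divisibility by $\delta$. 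For existence, I would pick a basis $(e_1,e_2)$ of $E_0[\ell^k]$ so that $\ell^{k-m}e_1$ generates the appropriate rational cyclic $\ell^m$-isogeny kernel, with $m=1$ in the generic case and $m=2$ for $k\geq 2$ in the exceptional case. The level computation implies $\im\rho_{E_0,\ell^k}$ is the preimage of the corresponding Borel, consisting of matrices $\begin{pmatrix} a & b \\ \ell^m c & d \end{pmatrix}$ with $a,d\in(\Z/\ell^k)^\times$. The Galois orbit of $e_1$ then has size $(\ell-1)\ell^{2k-1-m}$; quotienting by $\pm 1$ via Lemma~\ref{lem:ResidueField} and the Weber function, the point $[E_0,e_1]\in X_1(\ell^k)$ has degree $(\ell-1)\ell^{2k-1-m}/2=\delta$, and $j([E_0,e_1])=j(E_0)\in\Q$ supplies $j_{\min}\in\Q$.

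The main obstacle is invoking Greenberg's work precisely to pin down the level of $\rho_{E_0,\ell^\infty}$ in both the generic and exceptional settings; a more subtle point is selecting $E_0$ (possibly up to isogeny) so that its mod-$\ell$ image avoids the bad list of Remark~\ref{BadImageRmk} while retaining the needed isogeny structure. The remaining orbit-counting is routine, though one should handle separately the edge case $k=1$ in the exceptional setting (where the mod-$25$ structure is not yet visible and one falls back to the $m=1$ construction, still giving $\deg=2=\delta$).
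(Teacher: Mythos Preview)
Your divisibility argument is essentially the paper's: apply Proposition~\ref{Prop1.6Restated} and invoke Greenberg (and Greenberg--Rubin--Silverberg--Stoll) to obtain $d=0$ in the generic case and $d=1$ in the exceptional one. However, the way you deduce $d$ is not quite right, and the same error breaks your existence argument. You assert that ``the level computation implies $\im\rho_{E_0,\ell^k}$ is the preimage of the corresponding Borel.'' Greenberg's results give $d=\ord_\ell([\GL_2(\Z_\ell):\im\rho_{E_0,\ell^\infty}])\in\{0,1\}$; they do \emph{not} force the mod~$\ell^m$ image to be the full Borel. For instance, a curve with mod~$5$ image $5B.1.1$ or mod~$7$ image $7B.1.1$ has $d=0$ yet the image is a proper subgroup of the Borel. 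In such cases the Galois orbit of $e_1$ does not have size $(\ell-1)\ell^{2k-1-m}$, and more to the point $\deg(x)\neq(\ell-1)/2$, so your claimed equality $(\ell-1)\ell^{2k-1-m}/2=\delta$ fails: $\delta=\deg(x)\cdot\ell^{2k-2}$ with $\deg(x)$ depending on the actual image.

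The paper sidesteps this entirely in the generic case: once $d=0$ is known, any preimage of $x$ under $X_1(\ell^k)\to X_1(\ell)$ (a map of degree $\ell^{2k-2}$) has degree dividing $\deg(x)\cdot\ell^{2k-2}=\delta$; combined with the lower bound this forces equality, and $j_{\min}=j(E_0)$. No orbit computation is needed. In the exceptional case the paper does something substantively different from your sketch: rather than working on a curve with a rational $25$-isogeny, it replaces $E'$ by a $\Q$-isogenous curve with two independent $5$-isogenies (so mod~$5$ image of split-Cartan type, automatically avoiding the list in Remark~\ref{BadImageRmk}), then passes to the quotient $E_2=E'/C_2$, which recovers a rational cyclic $25$-isogeny together with enough control on the isogeny character to place a point of order $25$ in degree dividing $5$ (or $10$ in the $5Cs$ case). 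Lifts of this point then realize $\delta$. Your proposal to keep $E_0$ with a $25$-isogeny can conflict with avoiding the bad-image list, and even when it does not, the orbit count you wrote need not equal $\delta$ (e.g.\ if $\deg(x)=1$ your formula gives $2\cdot 5^{2k-3}=2\delta$).
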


\begin{proof}
Suppose first that $\ell>5$ or {that} $\ell=5$ and $\mathcal{E}$ does not contain $E'/\Q$ with a rational cyclic 25-isogeny. By Proposition \ref{Prop1.6Restated}, there exists $E_0/\Q\in \mathcal{E}$ and $x=[E_0,P_0]\in X_1(\ell)$ such that 
\[
\deg(x)\cdot \ell^{\max(0,2k-2-d)}
\] divides the degree of any point on $X_1(\ell^k)$ associated to $\mathcal{E}$, where $d=\ord_{\ell}([\GL_2(\Z_{\ell}): \im \rho_{E_0, \ell^{\infty}}])$.
By Theorem \ref{GreenbergThm}, we have $d=0$. Since $\deg(X_1(\ell^k) \rightarrow X_1(\ell))=\ell^{2k-2}$, lifts of $x$ on $X_1(\ell^k)$ show this divisibility condition is best-possible with $j_{min}=j(E_0)$.

Now, suppose $\ell=5$ and $\mathcal{E}$ contains $E'/\Q$ with a rational cyclic 25-isogeny. Then by Theorem \ref{GreenbergThm}, we have $ \ord_{5}([\GL_2(\Z_{5}): \im \rho_{E', 5^{\infty}}])=1$. By replacing $E'$ with a curve $\Q$-isogenous, we may assume $E'$ has two independent 5-isogenies. The index of the image of the 5-adic Galois representation is unchanged by \cite[Proposition 2.1.1]{greenberg2012}. Then $\im \rho_{E',5}$ is one of the following, and we consider each case separately:
\begin{itemize}
\item 5Cs.1.1, 5Cs.1.3, or 5Cs.4.1: Replacing $E'$ with a quadratic twist if necessary, we may assume $\im \rho_{E',5}=5Cs.1.1$. By Proposition \ref{Prop1.6Restated} and Remark \ref{BadImageRmk}, we see $5^{\max(0,2k-3)}$ divides the degree of any point on $X_1(5^k)$ associated to $\mathcal{E}$. The curve $E'$ has a subgroup $C_1$ generated by a rational point $P$ of order 5 and an independent rational cyclic subgroup $C_2$ of order 5. Then $E_2\coloneqq E'/C_2$ has a rational cyclic 25-isogeny, and the image of $P$ is a point of order 5 in $E_2(\Q)$ lying in its kernel. Thus the image of the isogeny character $\chi: \Gal_{\Q} \rightarrow (\Z/25\Z)^{\times}$ lands in the subgroup $\{a : a \equiv 1 \pmod{5}\}$ and so has order dividing 5. It follows that  $E_2$ attains a rational point of order 25 in an extension of degree dividing 5. By Proposition \ref{prop:Degree}, there are lifts of this point on $X_1(5^k)$ of degree at most $5^{2k-3}$ for all $k \geq 2$. Thus the condition is best-possible for all $k$ with $j_{min}=j(E_2)$.
\item 5Cs: By Proposition \ref{Prop1.6Restated} and Remark \ref{BadImageRmk}, we see $2\cdot 5^{2k-3}$ divides the degree of any point on $X_1(5^k)$ associated to $\mathcal{E}$. Note $E'$ has two independent 5-isogenies with kernels $C_1$ and $C_2$. Then $E_2\coloneqq E'/C_2$ has a rational cyclic 25-isogeny. By Proposition \ref{IsogenyToRationalPoints}, the curve $E_2$ gives a closed point on $X_1(5)$ in degree dividing 2 and a closed point on $X_1(25)$ in degree dividing 10. Again by considering lifts of this point with degree bounds given by Proposition \ref{prop:Degree}, we see the divisibility condition is best-possible for all $k$ with $j_{min}=j(E_2)$. \qedhere
\end{itemize}
\end{proof}

\subsection{Proof of Proposition \ref{BestDivLargePrimes}}
Let $F=\Q(x)$. By Lemma \ref{Cor:IsogenyDef}, there is a model of $E/F$ where $P \in E(F)$ and such that there exists an $F$-rational cyclic isogeny $\varphi:E \rightarrow E'$ with $j(E')\in\Q$. Since $E$ has an $\ell$-isogeny over $F$, so does $E'$ by \cite[Proposition 3.2]{CremonaNajmanQCurve}. It follows from \cite[Proposition 3.3]{CremonaNajmanQCurve} that any $E_0/\Q\in\mathcal{E}$ with $j(E_0)=j(E')$ has a $\Q$-rational cyclic $\ell$-isogeny, unless $\ell=7$ and $j(E_0)=3^3\cdot5\cdot7^5/2^7$. Work of Mazur \cite{mazur78} implies $\ell \leq 37$. By applying Lemma \ref{MinTorsionCurveForIsogeny} and checking the possible $x\in X_1(\ell)$ of odd degree associated to elliptic curves over $\Q$ in $\mathcal{E}$, where we may omit images as appearing in Remark \ref{BadImageRmk}, we see that we are done unless $\ell=7$ and $\mathcal{E}$ contains an elliptic curve with $j$-invariant $3^3 \cdot 5 \cdot 7^5 / 2^7$.

Let us discuss the case $\ell=7$ and $j(E_0)=3^3\cdot5\cdot7^5/2^7$. Recall $P \in E(F)$ is a point of order $7^k$. Notice that $7^{k-1} P$ is a point of order $7$ on $E$ and is also defined over $F$ on $E$. By \cite[Corollary 4.3]{OddDegQCurve}, the curve $E'$ has a rational point of order $7$ over an extension $F'/F$ of degree $1$ or $7$. As $j(E') = 3^3 \cdot 5 \cdot 7^5 / 2^7$, a computation with division polynomials shows the residue field of a closed point on $X_1(7)$ associated to $E'$ has degree 6 or 9, so $[F':\Q]$ is divisible by 6 or 9. Since $[F':F]$ divides 7, it follows that 6 or 9 must divide $[F:\Q]$. Since $F$ is an extension of odd degree, we must have $9 \mid [F:\Q]$. Moreover, in \cite[Proposition 4.1]{OddDegQCurve}, it is proven that $3 \cdot 7^{\text{max} (0, 2k-3)}$ divides $[F: \Q]$. Therefore, $9 \cdot 7^{\text{max} (0, 2k-3)}$ divides $[F:\Q]$.

Conversely, we will now show there is a point on $X_1(7^k)$ of degree $9\cdot7^{\text{max}(0, 2k-3)}$ which is associated to $\mathcal{E}$. By replacing $E_0$ with a quadratic twist if necessary, we may assume $E_0$ has LMFDB label \href{https://www.lmfdb.org/EllipticCurve/Q/2450/y/1}{2450.y1}. A computation with division polynomials confirms that $E_0$ gives a closed point on $X_1(7)$ of degree 9, which fulfills the $k=0$ case. In addition, the mod 7 image of $E_0$ is 7Ns.2.1, which is of order 18 and generated by the following matrices:
\[\begin{pmatrix} 0 &1 \\ 1 & 0 \end{pmatrix}, \begin{pmatrix}
2 & 0\\0 &1
\end{pmatrix}
\]

A \href{https://github.com/abbey-bourdon/minimal_torsion_curves/blob/main/Proposition5.1/code}{Magma computation} shows that 7Ns.2.1 contains an index 3 subgroup conjugate to the group generated by
\[\begin{pmatrix} 1& 0 \\ 0 & 6 \end{pmatrix}, \begin{pmatrix}
2 & 0 \\ 0 & 2
\end{pmatrix}.
\]
Thus over a cubic extension $F$, the curve $E_0$ attains two independent 7 isogenies, and is $F$-isogenous to an elliptic curve $E_1/F$ with a rational cyclic 49-isogeny. By Proposition \ref{IsogenyToRationalPoints}, the curve $E_1$ gives a closed point on $X_1(49)$ of degree dividing $9 \cdot 7$. The previous paragraph shows it must have degree exactly $9 \cdot 7$, and since $\deg(X_1(7^k) \rightarrow X_1(7^2))$ has degree $7^{2k-4}$, the divisibility conditions of Proposition \ref{Prop1.6Restated} are the best-possible.

\section{Results for non-CM classes and $\ell  =3$}
In this section, we will prove the following result, which includes instances where the divisibility conditions of Proposition \ref{Prop1.6Restated} can be improved. It proves Theorem \ref{Thm1.3} if $\ell=3$. See Section 2.1 for a discussion of the notation used for subgroups of $\GL_2(\Z_3)$.
\begin{prop} \label{DivConditionsPrime3}
Let $\mathcal{E}$ be a rational $\overline{\Q}$-isogeny class of non-CM elliptic curves. If $E \in \mathcal{E}$ and $x=[E,P]\in X_1(3^k)$ is a point of odd degree for $k \in \Z^+$, then $\delta \mid \deg(x)$ for $\delta$ defined as follows:
\[\delta \coloneqq \begin{cases}
3^{\max(0,2k-3)} \text{ if there is $E'/\Q \in \mathcal{E}$ with $\im \rho_{E',3^{\infty}} \in \{9.36.0.6, 9.36.0.8\}$},\\
3^{\max(0,2k-2-d)} \text{ otherwise},
\end{cases}\]
where $d =\ord_{3}([\GL_2(\Z_{3}): \im \rho_{E_0, 3^{\infty}}])$ for any $E_0/\Q \in \mathcal{E}$. These are generally best-possible:
\begin{enumerate}
\item Suppose there is no $E'/\Q \in \mathcal{E}$ with $\im \rho_{E',3^{\infty}} \in \{9.12.0.2, 9.36.0.2, 9.36.0.7, 9.36.0.8\}$. For any $k \in \Z^+$, there exists a point of degree $\delta$ on $X_1(3^k)$ associated to $j_{min} \in \mathcal{E}$.
\item Suppose there is an $E'/\Q \in \mathcal{E}$ with $\im \rho_{E',3^{\infty}} \in \{9.12.0.2, 9.36.0.7, 9.36.0.8\}$. For $k=1$ or $k \geq 3$, there exists a point of degree $\delta$ on $X_1(3^k)$ associated to $j_{\min} \in \mathcal{E}$. If $k=2$, then $3\delta \mid \deg(x)$ and there exists a point of degree $3\delta$ on $X_1(3^k)$ associated to $j_{min} \in \mathcal{E}$. 
\item Suppose there is an $E'/\Q \in \mathcal{E}$ with $\im \rho_{E',3^{\infty}}=9.36.0.2$. For $k=1$ or $k \geq 4$, there exists a point of degree $\delta$ on $X_1(3^k)$ associated to $j_{min} \in \mathcal{E}$. Otherwise $3\delta \mid \deg(x)$ and there exists a point of degree $3\delta$ on $X_1(3^k)$ associated to $j_{min} \in \mathcal{E}$. 
\end{enumerate}
One can take $j_{min} \in \Q$, unless we are in case (2) with $k>2$ or case (3) with $k>3$; in these latter cases one can take $j_{min}$ to be $3$-isogenous to a rational $j$-invariant.
\end{prop}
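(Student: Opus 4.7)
The plan is to start from Proposition \ref{Prop1.6Restated} applied together with the odd-degree hypothesis. Given a closed point $x = [E,P] \in X_1(3^k)$ of odd degree, Lemma \ref{Cor:IsogenyDef} supplies an $F$-rational cyclic isogeny $\varphi \colon E \to E'$ with $j(E') \in \Q$, where $F = \Q(x)$. Choosing $E_0/\Q \in \mathcal{E}$ with $j(E_0) = j(E')$, Proposition \ref{Prop1.6Restated} gives that $\deg(x_0) \cdot 3^{\max(0,2k-2-d)}$ divides $\deg(x)$ for an appropriate $x_0 \in X_1(3)$ coming from $E_0$, with $d = \ord_3([\GL_2(\Z_3) : \im \rho_{E_0,3^\infty}])$. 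Since $\deg(X_1(3) \to X_1(1)) = 4$, the odd-degree constraint forces $\deg(x_0) = 1$, so $E_0$ has a rational point of order $3$ and the divisibility reduces to $3^{\max(0,2k-2-d)} \mid \deg(x)$.

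To obtain the strengthened bound $3^{\max(0,2k-3)}$ in the two special cases $\im\rho_{E_0, 3^\infty} \in \{9.36.0.6, 9.36.0.8\}$, I would inspect the two $3$-adic image groups directly using the RSZB classification \cite{RSZ21}. Both have index $36$ (so $d=2$), but the mod-$9$ image inside $\GL_2(\Z/9\Z)$ does not contain a subgroup that fixes a point of order exactly $9$ of index equal to the naive value $3^{2\cdot 2 -2-d} = 1$; instead, any stabilizer of an order-$9$ point has index divisible by $3$. Combined with the degree formula of Proposition \ref{prop:Degree} for $X_1(3^k) \to X_1(9)$, this sharpens the divisibility to $3^{\max(0,2k-3)}$ under these hypotheses.

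For the achievability statements (1)--(3), I would construct explicit closed points of the prescribed degree. In case (1), no further level-$9$ obstruction appears, so one lifts a minimal $x_0 \in X_1(3)(\Q)$ attached to $E_0$ through Proposition \ref{prop:Degree} to obtain a point on $X_1(3^k)$ of degree $\delta$. In cases (2) and (3), the distinguished level-$9$ image gives a rational cyclic $3$-isogeny whose target $E_1 = E_0/C$, over a small extension $L/\Q$, acquires a rational cyclic $9$- or $27$-isogeny, mirroring the $\ell = 7$ construction at the end of Proposition \ref{BestDivLargePrimes}. Applying Proposition \ref{IsogenyToRationalPoints} to $E_1$ then produces a closed point on $X_1(9)$ or $X_1(27)$ in the desired degree, whose lifts under Proposition \ref{prop:Degree} realize $\delta$ for $k$ in the stable range, with $j$-invariant $3$-isogenous to the rational $j(E_0)$.

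The main obstacle I anticipate is the small-$k$ exceptions in (2) at $k=2$ and in (3) at $k \in \{2,3\}$, where the conclusion is $3\delta \mid \deg(x)$ rather than $\delta \mid \deg(x)$. For these, I would show that a hypothetical point of degree $\delta$ forces $\im \rho_{E_0, 3^k}$ to contain a stabilizer subgroup absent from the classification of the prescribed $3$-adic images in \cite{RSZ21}, ruling it out via a direct \texttt{Magma} check on each of $9.12.0.2$, $9.36.0.2$, $9.36.0.7$, and $9.36.0.8$. The matching degree-$3\delta$ constructions would again proceed by passing to $E_1 = E_0/C$ and, where necessary, a quadratic twist to arrange the $F$-rational structure promised by Proposition \ref{IsogenyToRationalPoints}. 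The main delicacy is that $E_1$ need not descend to $\Q$, so its $j$-invariant generates a cubic extension of $\Q$ but lies $3$-isogenous to a rational $j$-invariant, consistent with the final sentence of the statement.
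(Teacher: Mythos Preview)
Your outline has a genuine gap in the sharpened divisibility $3^{\max(0,2k-3)}$ for the images $9.36.0.6$ and $9.36.0.8$. Your argument inspects stabilizers of order-$9$ points inside $\im\rho_{E_0,9}$; this controls only the degrees of closed points on $X_1(9)$ with $j$-invariant $j(E_0)$, not points associated to an arbitrary $E\in\mathcal{E}$. Even granting that every odd-degree point on $X_1(9)$ coming from $\mathcal{E}$ has degree divisible by $3$, combining this with the general bound $3^{2k-4}\mid\deg(x)$ from Proposition~\ref{Prop1.6Restated} yields only $\operatorname{lcm}(3,3^{2k-4})=3^{2k-4}$ for $k\ge 3$, so no improvement. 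The map $X_1(3^k)\to X_1(9)$ gives that the degree of the image divides $\deg(x)$, not that $\deg(x)$ is a multiple of $3$ times the fiber degree. The paper closes this gap with Proposition~\ref{Prop5.7}, whose proof is not a stabilizer check: assuming $3^{2k-3}\nmid[F:\Q]$, Lemma~\ref{Lem3.8} pins down $\im\rho_{E/F,3^k}$ exactly (as the group of matrices $\left(\begin{smallmatrix}1&*\\0&*\end{smallmatrix}\right)$) and shows the $3$-adic image is its full preimage; Lemma~\ref{Lem3.10} then forces $\ker\varphi\subset\langle P\rangle$; finally one transports the explicit element $\left(\begin{smallmatrix}1&0\\0&4\end{smallmatrix}\right)$ through $\varphi$ to land in $\im\rho_{E_0/L,9}$ and checks in \texttt{Magma} that no conjugate of this order-$3$ subgroup lies in $9.36.0.6$ or $9.36.0.8$. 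This transfer through the isogeny is the missing idea.

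The same issue recurs in your treatment of the small-$k$ exceptions: arguing that a degree-$\delta$ point would force a forbidden stabilizer subgroup of $\im\rho_{E_0,3^k}$ again ignores that the point is attached to $E$, not $E_0$. The paper instead invokes Corollary~\ref{ImageOnlyCor} to reduce to a single representative $E_0$ and then exhaustively computes, via \texttt{Magma}, the degrees of closed points on $X_1(3^k)$ for every curve $3^r$-isogenous to $E_0$ (with $j$-invariants the roots of $\Phi_{3^r}(X,j(E_0))$), stopping once all odd degrees are divisible by $3\delta$. Your achievability constructions in cases (2) and (3) are in the right spirit (pass over a cubic extension to a $3$-isogenous curve acquiring a longer cyclic isogeny, then apply Proposition~\ref{IsogenyToRationalPoints}), and do match the paper's constructions, but the paper also handles several further $3$-adic images case-by-case (e.g.\ $3.4.0.1$, $9.12.0.1$, $9.36.0.4$, $27.36.0.1$) that your sketch does not distinguish.
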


The proof shows that $d \leq 2$ for the classes which produce points of odd degree. Thus we immediately deduce the following corollary.

\begin{cor}
Let $\mathcal{E}$ be a rational $\overline{\Q}$-isogeny class of non-CM elliptic curves. If $E \in \mathcal{E}$ and $x=[E,P]\in X_1(3^k)$ is a point of odd degree, then $3^{\max(0,2k-4)} \mid  \deg(x)$, and this is the best possible across all such $\mathcal{E}$.
\end{cor}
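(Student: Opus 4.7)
The plan is to deduce the corollary directly from Proposition \ref{DivConditionsPrime3}, which separates the divisibility bound into two cases depending on whether $\mathcal{E}$ contains a rational elliptic curve whose $3$-adic image lies in $\{9.36.0.6,\,9.36.0.8\}$.

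First I would observe the easy direction. If $\mathcal{E}$ falls in the exceptional branch, then $\delta=3^{\max(0,2k-3)}$ divides $\deg(x)$, and since $2k-3\geq 2k-4$ we get $3^{\max(0,2k-4)}\mid \deg(x)$ for free. Otherwise $\delta=3^{\max(0,2k-2-d)}$ where $d=\ord_3([\GL_2(\Z_3):\im\rho_{E_0,3^\infty}])$ for any $E_0/\Q\in\mathcal{E}$. The key input from the proof of Proposition \ref{DivConditionsPrime3} (which is the remark immediately preceding the corollary) is that $d\leq 2$ whenever $\mathcal{E}$ actually gives rise to a point of odd degree on some $X_1(3^k)$. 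Granted this, $\max(0,2k-2-d)\geq \max(0,2k-4)$, and the divisibility $3^{\max(0,2k-4)}\mid\deg(x)$ follows.

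For the best-possible claim, I would exhibit a single rational $\overline{\Q}$-isogeny class realizing equality. The natural choice is a class $\mathcal{E}$ containing some $E_0/\Q$ with $d=2$ and whose $3$-adic image is none of the exceptional labels $9.12.0.2$, $9.36.0.2$, $9.36.0.6$, $9.36.0.7$, $9.36.0.8$ excluded in cases (1)--(3) of Proposition \ref{DivConditionsPrime3}. For such a class, part (1) of Proposition \ref{DivConditionsPrime3} produces, for every $k\in\Z^+$, a point on $X_1(3^k)$ associated to some $j_{\min}\in\mathcal{E}$ of degree exactly $\delta=3^{\max(0,2k-2-d)}=3^{\max(0,2k-4)}$, so the bound is sharp for all $k$. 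Checking the classification of $3$-adic images of non-CM $E/\Q$ due to Rouse--Sutherland--Zureick-Brown \cite{RSZ21} produces a suitable example; any rational class with mod $3$ image of type $3Nn$ and appropriate level-$9$ behavior (giving $d=2$) and containing a point of odd degree on $X_1(3)$ will work.

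The only real obstacle is confirming the $d\leq 2$ bound for all odd-degree producing classes. This is essentially already done inside Proposition \ref{DivConditionsPrime3}: the argument there enumerates the possible $\im\rho_{E_0,3^\infty}$ whose mod $3$ reduction admits an odd-degree point on $X_1(3)$, and each such image has $3$-adic index of valuation at most $2$ (the exceptional images $9.36.0.6,\,9.36.0.8$ are precisely those whose behavior forces the stronger bound $3^{\max(0,2k-3)}$, but still have $d\leq 2$). Once this enumeration is invoked, the corollary is immediate.
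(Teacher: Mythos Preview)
Your divisibility argument is correct and is exactly the paper's argument: the sentence preceding the corollary already says that the proof of Proposition~\ref{DivConditionsPrime3} yields $d\le 2$, and you combine this with the two branches of $\delta$ just as the paper intends.

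The gap is in your ``best possible'' example. A curve $E_0/\Q$ with mod~$3$ image $3Nn$ (the normalizer of a non-split Cartan) has \emph{no} $\Q$-rational $3$-isogeny, and the opening paragraph of the proof of Proposition~\ref{DivConditionsPrime3} shows that any $E_0/\Q\in\mathcal{E}$ arising from an odd-degree point on $X_1(3^k)$ must have a rational $3$-isogeny (this is forced by \cite[Propositions 3.2, 3.3]{CremonaNajmanQCurve}). In fact $C_{ns}^+(3)$ acts transitively on the nonzero vectors of $(\Z/3\Z)^2$, so every closed point on $X_1(3)$ attached to such an $E_0$ has even degree; your proposed class never produces odd-degree points at all. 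The correct witnesses are already isolated in case~(6) of the proof of Proposition~\ref{DivConditionsPrime3}: any class containing $E_0/\Q$ with $\im\rho_{E_0,3^\infty}\in\{9.36.0.1,\,9.36.0.4,\,9.36.0.5\}$ has $d=2$, lies in case~(1) of the proposition, and (after a twist) $E_0$ has a rational point of order~$9$, so it gives a point on $X_1(3^k)$ of degree exactly $3^{\max(0,2k-4)}$ for every $k$.
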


\begin{remark} \label{Remark5.4}
Suppose there exists $E' \in \mathcal{E}$ with $\im \rho_{E',3^{\infty}} =9.36.0.6$. By Proposition \ref{DivConditionsPrime3}, any odd degree point on $X_1(3^k)$ associated to an elliptic curve in $\mathcal{E}$ has degree divisible by $3^{\max(0,2k-3)}$. By Proposition \ref{Prop1.6Restated}, any point of even degree must be divisible by $2 \cdot  3^{\max(0,2k-4)}$. Since there exists $x \in X_1(3)$ of degree 1 associated to $E'$, this strengthens the lower bound in Proposition \ref{Prop1.6Restated} by a factor of 2 or 3, respectively. \end{remark}

\subsection{Preliminary Results} The goal of this section is to prove Proposition \ref{Prop5.7}, which obtains the divisibility condition of Proposition \ref{DivConditionsPrime3} in the case where $\mathcal{E}$ contains $E'/\Q$ with $\im \rho_{E', 3^{\infty}}=9.36.0.6$ or $9.36.0.8$. We start by proving two lemmas.
\begin{lem} \label{Lem3.8} 
Suppose $F$ is a number field of odd degree and $E/F$ is a non-CM elliptic curve with $P\in E(F)$ of order $3^k$ for $k \in \Z^+$. Let $\varphi:E \rightarrow E'$ be an $F$-rational isogeny, where there exists $E_0/\Q$ of with $j(E_0)=j(E')$ and $d \coloneqq \ord_{3}([\GL_2(\Z_{3}): \im \rho_{E_0, 3^{\infty}}])$. If $3^{\max(0,2k-1-d)} \nmid [F:\Q]$, then with respect to the basis $\{P,Q\}$ of $E[3^k]$ we have
\[
\im \rho_{E/F,3^k}=\left\{\begin{pmatrix}
1 & x \\
0 & y
\end{pmatrix}|\, x \in \Z/3^k\Z, y \in (\Z/3^k\Z)^{\times} \right\} 
\]
and $\im \rho_{E/F,3^{\infty}} = \pi^{-1}(\im \rho_{E/F,3^k})$. 
\end{lem}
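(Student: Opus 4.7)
The plan is to combine the upper-triangular structure of $G := \im \rho_{E/F, 3^k}$ (forced by $P \in E(F)$) with an index bound on $\im \rho_{E/F, 3^\infty}$ derived from the isogeny to $E_0$, and then derive a contradiction under the hypothesis if $G \neq H$.

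Fix a basis $\{P, Q\}$ of $E[3^k]$. Since $P$ is $F$-rational, $G$ lies in $H := \left\{ \begin{pmatrix} 1 & x \\ 0 & y \end{pmatrix} : x \in \Z/3^k\Z,\ y \in (\Z/3^k\Z)^{\times} \right\}$, and a direct count gives $|H| = 2 \cdot 3^{2k-1}$ with $\ord_3([\GL_2(\Z/3^k\Z):H]) = 2k-2$. To measure $[H:G]$, write $|G \cap U| = 3^a$ where $U \subseteq H$ is the unipotent subgroup. The determinant $\det G$ equals the image of the mod $3^k$ cyclotomic character on $\Gal_F$, which has order $[F(\zeta_{3^k}):F]$; since $[F:\Q]$ is odd, $\zeta_3 \notin F$ and this order equals $2 \cdot 3^{k-1-j}$ for some $0 \leq j \leq k-1$. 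Thus $[H:G] = 3^{k+j-a}$, equal to $1$ precisely when $G = H$.

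Combining the $F$-rational isogeny $\varphi: E \to E'$ (which preserves the $3$-adic index by \cite[Proposition 2.1.1]{greenberg2012}) with the fact that $E'$ is a quadratic twist of $E_0/F$ (which preserves $\ord_3$ of the index, since a quadratic twist changes $|\im \rho|$ by at most a factor of $2$) and restriction of scalars from $\Q$ to $F$ yields
\[
\ord_3([\GL_2(\Z_3) : \im \rho_{E/F, 3^\infty}]) \leq d + \ord_3([F:\Q]).
\]
If $G \neq H$, then $k+j-a \geq 1$, so $\ord_3([\GL_2(\Z/3^k\Z):G]) \geq 2k-1$; but the hypothesis $3^{\max(0,2k-1-d)} \nmid [F:\Q]$ forces $d \leq 2k-2$ and $\ord_3([F:\Q]) \leq 2k-2-d$, giving an upper bound of $2k-2$ and a contradiction. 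Therefore $G = H$.

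For the $3$-adic statement, fix $m \geq k$ and let $\pi_{m,k}: \GL_2(\Z/3^m\Z) \to \GL_2(\Z/3^k\Z)$ be reduction. The reduction $\im \rho_{E/F, 3^m} \to \im \rho_{E/F, 3^k} = H$ is surjective with kernel of size $3^s$ for some $s \leq 4(m-k)$, so $[\pi^{-1}_{m,k}(H) : \im \rho_{E/F, 3^m}] = 3^{4(m-k)-s}$. Applying the same index bound at level $3^m$ and using $\ord_3([\GL_2(\Z/3^m\Z) : \pi^{-1}_{m,k}(H)]) = 2k-2$ forces $s = 4(m-k)$, hence $\im \rho_{E/F, 3^m} = \pi^{-1}_{m,k}(\im \rho_{E/F, 3^k})$; passing to the inverse limit over $m$ gives $\im \rho_{E/F, 3^\infty} = \pi^{-1}(\im \rho_{E/F, 3^k})$. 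The main obstacle will be carefully establishing the displayed index bound, since the chain of isogeny invariance, twist invariance at odd $\ell$, and restriction of scalars each needs to be tracked at the level of $\ord_3$.
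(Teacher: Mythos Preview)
Your proof is correct and takes essentially the same approach as the paper's: both combine the containment $G\subseteq H$ with the index bound $\ord_3([\GL_2(\Z_3):\im\rho_{E/F,3^\infty}])\leq d+\ord_3([F:\Q])$ (the paper obtains this by passing to a quadratic extension $L$ with $E'\cong_L E_0$ and invoking \cite[Lemma~4.5]{OddDegQCurve}, while you argue directly via isogeny invariance, twist invariance at the level of $\ord_3$, and restriction from $\Q$ to $F$) to force $G=H$, and then repeat the count at higher level for the $3$-adic conclusion. The only cosmetic difference is that the paper handles the $3$-adic statement one step at a time using the Frattini-type input \cite[Proposition~3.5]{BELOV}, whereas you run the same index bound uniformly at every level $m\geq k$; your packaging is slightly more self-contained but the mechanism is identical.
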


\begin{proof}
Suppose $3^{\max(0,2k-1-d)} \nmid [F:\Q]$, so in particular $2k-1-d>0$. Let $\{P,Q\}$ be a basis of $E[3^k]$. Replacing $F$ with at worst a quadratic extension $L/F$, we may view $\varphi$ as an $L$-isogeny from $E$ to $E_0/L$.  Then $\im \rho_{E/L,3^{k}}$  is contained in
\[
H \coloneqq \left\{\begin{pmatrix}
1 & x \\
0 & y
\end{pmatrix}|\, x \in \Z/3^k\Z, y \in (\Z/3^k\Z)^{\times} \right\}, 
\] which has order $3^k \cdot \varphi(3^k)=3^{2k-1}\cdot 2$. If $\ord_{3}(\#\im \rho_{E/L,{3}^{k}})<{2k-1}$, then the index of the mod $3^k$ Galois representation of $E/L$ is divisible by $3^{2k-1}$. Thus
\[
3^{2k-1} \mid [\GL_2(\Z_{3}):\im \rho_{E/L,3^{\infty}}].\]
By \cite[ Lemma 4.5]{OddDegQCurve}, we have $3^{2k-1} \mid [\GL_2(\Z_{3}): \im \rho_{E_0/\Q, 3^{\infty}}] \cdot[L \cap \Q(E_0[3^{\infty}]):\Q]$. Since
\[
\ord_{3}([\GL_2(\Z_{3}): \im \rho_{E_0/\Q, 3^{\infty}}])=d,
\]
it follows that $3^{2k-1-d} \mid [L \cap \Q(E_0[3^{\infty}]):\Q]$. Since $L$ is at {most} a quadratic extension of $F$, then $3^{2k-1-d} \mid [F:\Q]$, contradicting our assumption. So we may assume $\ord_{3}(\#\im \rho_{E/L,3^{k}})={2k-1}$. 

Note $\im \rho_{E/F,3^{k}}$ contained in $H$ as well, and since $L/F$ is at worst a quadratic extension, we have $\ord_{3}(\#\im \rho_{E/F,3^{k}})={2k-1}$. If $\im \rho_{E/F,3^{k}}$ is properly contained in $H$, then $\#\im \rho_{E/F,3^{k}}=3^{2k-1}$. Since $\Q(\zeta_{3^k}) \subseteq F(E[3^k])$, it follows that 2 must divide $[F:\Q]$. This contradicts $F$ having odd degree. Hence $\im \rho_{E/F,3^{k}}=H$. 

If $\im \rho_{E/F,3^{\infty}} \neq \pi^{-1}(\im \rho_{E/F,3^k})$, then $[F(E[3^{k+1}]):F(E[3^k])]$ divides $3^3$; see, for example, \cite[Proposition 3.5]{BELOV}. Since $\# \Gal(F(E[3^k])/F) =3^{2k-1}\cdot 2$, we have
\[
\# \Gal(F(E[3^{k+1}])/F) \mid 3^3\cdot 3^{2k-1}\cdot 2=3^{2k+2}\cdot 2.
\]
It follows that $\# \Gal(L(E[3^{k+1}])/L) \mid 3^{2k+2}\cdot 2$, and so the index of the 3-adic Galois representation of $E/L$ is divisible by at least $3^{2k-1}\cdot 8$. By \cite[Lemma 4.5]{OddDegQCurve}, we have $3^{2k-1}\cdot 8 \mid [\GL_2(\Z_{3}): \im \rho_{E_0/\Q, 3^{\infty}}] \cdot[L \cap \Q(E_0[3^{\infty}]):\Q]$. It follows that $3^{2k-1-d} \mid [L \cap \Q(E_0[3^{\infty}]):\Q]$. Since $L$ is at worst a quadratic extension of $F$, then $3^{2k-1-d} \mid [F:\Q]$, contradicting our assumption. Thus, $\im \rho_{E/F,3^{\infty}} = \pi^{-1}(\im \rho_{E/F,3^k})$.
\end{proof}

\begin{lem} \label{Lem3.10}
Suppose $F$ is a number field of odd degree and $E/F$ is a non-CM elliptic curve with $P\in E(F)$ of order $3^k$, $k \geq 2$. Let $\varphi:E \rightarrow E'$ be an $F$-rational isogeny of degree $3^r$ for $r \in \Z^+$, where there exists $E_0/\Q$ with $j(E_0)=j(E')$ and $d \coloneqq \ord_{3}([\GL_2(\Z_{3}): \im \rho_{E_0, 3^{\infty}}])$. If $3^{\max(0,2k-1-d)} \nmid [F:\Q]$, then $r \leq k$ and $\ker(\varphi) \subseteq \langle P \rangle$.
\end{lem}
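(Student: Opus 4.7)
The plan is to deploy Lemma~\ref{Lem3.8} to pin down $\im\rho_{E/F,3^{\infty}}$ and then use carefully chosen matrices in that image to constrain $\ker(\varphi)$. First I would invoke Lemma~\ref{Lem3.8}: its hypothesis $3^{\max(0,2k-1-d)} \nmid [F:\Q]$ is exactly what is assumed here, so with respect to the basis $\{P,Q\}$ of $E[3^k]$ we have $\im\rho_{E/F,3^{\infty}} = \pi^{-1}(H)$ where $H = \{\bigl(\begin{smallmatrix} 1 & x \\ 0 & y \end{smallmatrix}\bigr) : x \in \Z/3^k\Z,\ y \in (\Z/3^k\Z)^{\times}\}$. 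I would then fix $n > k+r$ and a compatible basis $\{e_1,e_2\}$ of $E[3^n]$ with $P = 3^{n-k}e_1$, so that every matrix in $\GL_2(\Z/3^n\Z)$ whose reduction mod $3^k$ lies in $H$ is realised by some $\sigma \in \Gal_F$.

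By \cite[Lemma~A.1]{CremonaNajmanQCurve} I would reduce to the case where $\varphi$ is cyclic, so $C \coloneqq \ker(\varphi)$ is cyclic of order $3^r$ generated by some $R = \alpha e_1 + \beta e_2$ with $\min(v_3(\alpha), v_3(\beta)) = n - r$. The first substantive step is to show $\beta = 0$. Applying $M_d = \bigl(\begin{smallmatrix} 1 & 0 \\ 0 & 2 \end{smallmatrix}\bigr)$, which lies in $\im\rho_{E/F,3^n}$ because its mod-$3^k$ reduction is in $H$, the cyclicity of $C$ forces $M_d R = \lambda R$ for some $\lambda$, giving $(\lambda-1)\alpha \equiv 0$ and $(\lambda-2)\beta \equiv 0 \pmod{3^n}$. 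If $v_3(\alpha) = n - r$, these force $\lambda \equiv 1 \pmod{3^r}$, and then $\lambda - 2$ is a unit, forcing $\beta = 0$. If instead $v_3(\beta) = n - r < v_3(\alpha)$, the same equations give $\alpha = 0$ and $R \in \langle e_2 \rangle$; applying $M_b = \bigl(\begin{smallmatrix} 1 & 1 \\ 0 & 1 \end{smallmatrix}\bigr) \in \im\rho_{E/F,3^n}$ then produces $\beta e_1 + \beta e_2 \in C$, which cannot be a scalar multiple of $R \in \langle e_2 \rangle$ unless $\beta = 0$---a contradiction. Either way, $\beta = 0$ and $R \in \langle e_1 \rangle$.

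With $\beta = 0$ in hand the final step is quick: applying $M_c = \bigl(\begin{smallmatrix} 1 & 0 \\ 3^k & 1 \end{smallmatrix}\bigr)$---which lies in $\im\rho_{E/F,3^n}$ because its mod-$3^k$ reduction is the identity---gives $M_c R = \alpha e_1 + 3^k\alpha\, e_2 \in C \subseteq \langle e_1 \rangle$, so $3^k\alpha \equiv 0 \pmod{3^n}$, i.e., $v_3(\alpha) \geq n - k$. Combined with $v_3(\alpha) = n - r$ (forced by $R$ having order $3^r$), this yields both $r \leq k$ and $R \in 3^{n-k}\langle e_1 \rangle = \langle P \rangle$, whence $\ker(\varphi) = \langle R \rangle \subseteq \langle P \rangle$. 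The main obstacle I anticipate is the case split establishing $\beta = 0$: selecting a minimal set of matrices in $\im\rho_{E/F,3^n}$ that simultaneously preclude both $v_3(\beta) < v_3(\alpha) = n - r$ and $\beta \neq 0$ when $v_3(\alpha) = n - r$; once Lemma~\ref{Lem3.8} hands over the full preimage structure, the rest is short matrix arithmetic.
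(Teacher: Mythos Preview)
Your argument is correct and shares its core with the paper's proof: both invoke Lemma~\ref{Lem3.8} to pin down the Galois image and then test $\ker(\varphi)$ against the matrices $\bigl(\begin{smallmatrix}1&0\\0&2\end{smallmatrix}\bigr)$ and $\bigl(\begin{smallmatrix}1&1\\0&1\end{smallmatrix}\bigr)$ to force $\beta=0$. The organization differs, however. The paper first derives $r\leq k$ by contradiction, observing that if $r>k$ then the existence of an $F$-rational cyclic $3^r$-subgroup would prevent $\im\rho_{E/F,3^r}$ from being the full preimage of the level-$3^k$ image, contrary to Lemma~\ref{Lem3.8}; only then does it descend to $E[3^r]$ and run the $\beta=0$ argument there. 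You instead work uniformly at a single high level $3^n$ with $n>k+r$, prove $\beta=0$ first, and then extract $r\leq k$ (and $R\in\langle P\rangle$) via the extra matrix $M_c=\bigl(\begin{smallmatrix}1&0\\3^k&1\end{smallmatrix}\bigr)$, which lies in the image precisely because of the full-preimage statement in Lemma~\ref{Lem3.8}. Your route makes explicit the computation that the paper leaves implicit in its assertion ``$\im\rho_{E,3^r}\neq\pi^{-1}(\im\rho_{E,3^k})$,'' at the cost of carrying the larger level $3^n$ throughout; the paper's route is more modular but relies on the reader supplying that verification. One minor point: your appeal to \cite[Lemma~A.1]{CremonaNajmanQCurve} to reduce to cyclic $\varphi$ is harmless here since the paper's own proof simply assumes cyclicity, and that is how the lemma is applied in Proposition~\ref{Prop5.7}.
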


\begin{proof} Suppose $3^{\max(0,2k-1-d)} \nmid [F:\Q]$, so $2k-1-d>0$.
First, suppose for the sake of contradiction that $r>k$. Then 
\[
\im \rho_{E,3^{r}} \neq \pi^{-1}(\im \rho_{E,3^k}),
\]
and by Lemma \ref{Lem3.8} we have $3^{2k-1-d} \mid [F:\Q]$. We have reached a contradiction. So $r \leq k$.

By assumption there exists $R \in E$ of order $3^r$ such that $\ker(\varphi) = \langle R \rangle$. With respect to the basis $\{3^{k-r}P, Q\}$ of $E[3^r]$, by Lemma \ref{Lem3.8} we may assume
\[
\im \rho_{E/F, 3^r} =
\left\{\begin{pmatrix}
1 & x \\
0 & y
\end{pmatrix}|\, x \in \Z/3^r\Z, y \in (\Z/3^r\Z)^{\times} \right\}.
\]
With respect to this basis, $R=\alpha 3^{k-r}P+ \beta Q$ for some $\alpha, \beta \in \Z/3^r\Z$. We will show $\beta=0$, from which we may conclude that $R \in \langle P \rangle$.

Since $R$ has order $3^r$, we must have $3 \nmid \alpha$ or $3 \nmid \beta$. First suppose $3 \nmid \alpha$. The $F$-rationality of $\langle R \rangle$ and description of $\im \rho_{E/F, 3^r}$ as above implies there exists $\sigma \in \Gal_F$ and $\gamma_1 \in (\Z/3^r\Z)^{\times}$ such that 
\[
\sigma(R)=
\begin{pmatrix}
1 & 0 \\
0 & 2
\end{pmatrix}\begin{pmatrix}
\alpha \\
\beta
\end{pmatrix}=
\begin{pmatrix}
\alpha \\
2\beta
\end{pmatrix}=
\begin{pmatrix}
\gamma_1 \alpha \\
\gamma_1 \beta
\end{pmatrix}.
\]
So $\gamma_1 =1$, which implies $\beta =0$. Now suppose $3 \nmid \beta$. As before, there must exist $\gamma_2 \in (\Z/3^r\Z)^{\times}$ such that 
\[
\begin{pmatrix}
1 & 1 \\
0 & 1
\end{pmatrix}\begin{pmatrix}
\alpha \\
\beta
\end{pmatrix}=
\begin{pmatrix}
\alpha+
\beta \\
\beta
\end{pmatrix}=
\begin{pmatrix}
\gamma_2 \alpha \\
\gamma_2 \beta
\end{pmatrix}.
\]
Again $\gamma_2=1$ and $\beta =0$.
\end{proof}

\begin{prop} \label{Prop5.7}
Suppose $F$ is a number field of odd degree and $E/F$ is an elliptic curve with $P\in E(F)$ of order $3^k$, $k \geq 2$. Let $\varphi:E \rightarrow E'$ be an $F$-rational isogeny, where there exists $E_0/\Q$ of with $j(E_0)=j(E')$ and $\im \rho_{E_0, 3^{\infty}}=9.36.0.6$ or $9.36.0.8$. Then $3^{2k-3} \mid [F:\Q]$. 
\end{prop}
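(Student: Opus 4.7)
I argue by contradiction: assume $3^{2k-3} \nmid [F:\Q]$. Since $\im \rho_{E_0, 3^\infty} \in \{9.36.0.6,\, 9.36.0.8\}$ has index $36$ in $\GL_2(\Z_3)$, we have $d := \ord_3(36) = 2$, so $\max(0, 2k-1-d) = 2k-3$ for $k \geq 2$, and both Lemma \ref{Lem3.8} and Lemma \ref{Lem3.10} apply to $E/F$ with the chosen isogeny $\varphi:E\to E'$. Lemma \ref{Lem3.8} produces a basis $\{P, Q\}$ of $E[3^k]$ with respect to which
\[
\im \rho_{E/F, 3^k} = \left\{\begin{pmatrix}1 & b \\ 0 & d'\end{pmatrix} : b \in \Z/3^k\Z,\, d' \in (\Z/3^k\Z)^\times\right\},
\]
and $\im \rho_{E/F, 3^\infty}$ is its preimage in $\GL_2(\Z_3)$, while Lemma \ref{Lem3.10} gives $\ker(\varphi) = \langle 3^{k-r}P\rangle$ for some $0 \leq r \leq k$.

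Next, Proposition \ref{LadicIsogenyLemma} propagates this image structure across $\varphi$: choosing the basis $\{\varphi(P),\, 3^r\varphi(Q)\}$ of $E'[3^{k-r}]$, its matrix computation shows that $\im \rho_{E'/F, 3^{k-r}}$ consists precisely of matrices of the form $\begin{pmatrix}1 & 3^r b \\ 0 & d'\end{pmatrix}$, and part (2) of the same proposition pins down $\im \rho_{E'/F, 3^\infty}$ as the preimage at level $3^{k+r}$. In particular, the point $\varphi(P)\in E'(F)$ has order $3^{k-r}$ and is $\Gal_F$-fixed, placing tight constraints on the $F$-rational $3$-adic Galois structure of $E'$.

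The contradiction comes from comparing this forced structure with the hypothesis $\im \rho_{E_0,3^\infty} \in \{9.36.0.6,\, 9.36.0.8\}$. Since $E'/F$ is a quadratic twist of $E_0\times_\Q F$ and $3$ is odd, there is a quadratic extension $L/F$ over which $\im \rho_{E'/L, 3^\infty} = \im \rho_{E_0/L, 3^\infty}$, a subgroup of $\im \rho_{E_0/\Q, 3^\infty}$ of index $[L\cap \Q(E_0[3^\infty]):\Q]$. Combining the isogeny invariance of the $3$-adic index \cite[Proposition 2.1.1]{greenberg2012} (also preserved under quadratic twist at odd primes and under the quadratic extension $L/F$) with a direct order count from Lemma \ref{Lem3.8}, one obtains $\ord_3([L\cap \Q(E_0[3^\infty]):\Q]) \geq 2k-4$. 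Upgrading this by one more factor of $3$ depends on the explicit description of $9.36.0.6$ and $9.36.0.8$ from the RSZB classification \cite{RSZ21}: among the index-$3$ subgroups of the mod-$9$ Borel in $\GL_2(\Z/9)$, only one contains the full upper triangular subgroup with top-left entry $1$, and neither $9.36.0.6$ nor $9.36.0.8$ is that subgroup. Consequently, fitting the transferred triangular structure for $E'/L$ inside $\im \rho_{E_0/\Q, 3^\infty}$ forces an extra factor of $3$ in the intersection degree, yielding $3^{2k-3}\mid [F:\Q]$ and contradicting the hypothesis.

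The main obstacle is this last compatibility check: abstract index bookkeeping only delivers $3^{2k-4}\mid [F:\Q]$, and recovering the missing factor of $3$ depends on the explicit structure of $9.36.0.6$ and $9.36.0.8$ inside $\GL_2(\Z/9)$, which must be verified directly from the generators supplied in \cite{RSZ21} rather than from abstract invariants of the subgroups.
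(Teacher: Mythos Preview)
Your setup is correct and matches the paper's: assume $3^{2k-3}\nmid[F:\Q]$, note $d=2$, and invoke Lemmas~\ref{Lem3.8} and~\ref{Lem3.10} to pin down $\im\rho_{E/F,3^k}$ and force $\ker(\varphi)=\langle 3^{k-r}P\rangle$ with $r\le k$. But there is a genuine gap after this. Your application of Proposition~\ref{LadicIsogenyLemma} only yields $\im\rho_{E'/F,3^{k-r}}$; when $r\in\{k-1,k\}$ this gives at best mod-$3$ information, which is useless for comparing with the level-$9$ groups $9.36.0.6$ and $9.36.0.8$. Saying that part~(2) ``pins down $\im\rho_{E'/F,3^\infty}$ as the preimage at level $3^{k+r}$'' is true but does not tell you what that image is at level $9$. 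Your index bookkeeping then recovers exactly $3^{2k-4}\mid[F:\Q]$, which is the general bound of Proposition~\ref{Prop1.6Restated} and not an improvement.

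The ``upgrade by one more factor of $3$'' is where the content of the proposition lies, and your justification does not go through. The framing ``among the index-$3$ subgroups of the mod-$9$ Borel'' is wrong: the group $9.36.0.8$ is not contained in a Borel mod~$9$ (the corresponding $E_0/\Q$ acquires a $9$-isogeny only over a cubic extension; see case~(8) in the proof of Proposition~\ref{DivConditionsPrime3}). So there is no triangular ambient group in which to carry out your proposed comparison. The paper instead produces a \emph{specific element}: it shows, by a case split on whether $r\le k-2$ or $r\in\{k-1,k\}$, that the diagonalizable matrix $\left(\begin{smallmatrix}1&0\\0&4\end{smallmatrix}\right)$ of order~$3$ lies in $\im\rho_{E'/F,9}$ in a suitable basis, and hence (after a quadratic extension) in $\im\rho_{E_0/\Q,9}$. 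The cases $r\in\{k-1,k\}$ require lifting $P$ to a point $R$ of order $3^{r+2}$ in $E$ and using $\varphi(R)$ as part of a basis for $E'[9]$; this is precisely what your use of Proposition~\ref{LadicIsogenyLemma} at level $3^{k-r}$ cannot see. The contradiction is then the concrete Magma check that neither $9.36.0.6$ nor $9.36.0.8$ contains an element conjugate to $\left(\begin{smallmatrix}1&0\\0&4\end{smallmatrix}\right)$. You also omit the preliminary reduction to $\deg(\varphi)=3^r$ (needed for Lemma~\ref{Lem3.10}), which the paper handles by factoring off the prime-to-$3$ part of $\varphi$.
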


\begin{proof}
By \cite[Lemma A.1]{CremonaNajmanQCurve}, we may assume $\varphi$ is cyclic and generated by a point of order $3^r\cdot n$ where $3 \nmid n$. If $n>1$, then by replacing $E$ with an $n$-isogenous curve if necessary, we may assume $\varphi$ has degree $3^r$.
If $\im \rho_{E_0, 3^{\infty}}=9.36.0.6$ or $9.36.0.8$, then $\ord_{3}([\GL_2(\Z_{3}): \im \rho_{E_0, 3^{\infty}}])=2$. Suppose for the sake of contradiction that $3^{2k-3} \nmid [F:\Q]$. By Lemma \ref{Lem3.10}, we have $r \leq k$ and $\ker(\varphi) \subseteq \langle P \rangle$.

First, suppose $r=k-1$ or $k$. Set $t=r+2$, and let $\{R,S\}$ be a basis of $E[3^{t}]$ such that $3^{t-k}R = P$ and $3^{t-k}S=Q$. 
Then we will show $\{\varphi(R), 3^{k-2}\varphi(Q)\}$ is a basis of $E'[9]$. Suppose there exist $\alpha, \beta\in \Z/9\Z$ such that $\alpha \varphi(R)=\beta 3^{k-2}\varphi(Q)$. This implies $\alpha R - \beta 3^{k-2} Q \in \ker(\varphi) \subseteq \langle P \rangle = \langle 3^{t-k}R \rangle$. Thus $\beta 3^{t-s}S$ is in the cyclic subgroup generated by $R$, and so $3^2 \mid \beta$ since $\{R,S\}$ is a basis of $E[3^{t}]$. Thus $\alpha \varphi(R)=\beta 3^{k-2}\varphi(Q)=\mathcal{O}$, as desired.

By Lemma \ref{Lem3.8} with respect to the basis $\{P,Q\}$, there exists $\sigma \in \Gal_F$ such that 
\[
\rho_{E/F,3^k}(\sigma)=\begin{pmatrix}
1 & 0 \\
0 & 4
\end{pmatrix}.
\] 
Also by Lemma \ref{Lem3.8} we have $\im \rho_{E/F,3^{\infty}} = \pi^{-1}(\im \rho_{E/F,3^k})$, so with respect to the basis $\{R,S\}$ of $E[3^{t}]$, we know there exists $\sigma' \in \Gal_F$ such that
\[
\rho_{E/F,3^{t}}(\sigma')=\begin{pmatrix}
1 & 0 \\
0 & 4
\end{pmatrix}.
\] 
Under the basis $\{\varphi(R), 3^{k-2}\varphi(Q)\}$ of $E'[9]$,
\begin{align*}
\sigma'(\varphi(R))&=\varphi(\sigma'(R))=\varphi(R)
\end{align*}
\begin{align*}
\sigma'(3^{k-2}\varphi(Q))&=3^{k-2}\varphi(\sigma'(Q))=4\cdot 3^{k-2}\varphi(Q).
\end{align*}
So
\[
\rho_{E'/F, 9}(\sigma')=
\begin{pmatrix}
1 & 0 \\
0 & 4
\end{pmatrix}.
\]

After at worst a quadratic extension $L/F$, we have $E'/L \cong_L E_0/L$. Since the matrix above has order 3, 
\[
\rho_{E_0/L, 9}(\sigma')=\begin{pmatrix}
1 & 0 \\
0 & 4
\end{pmatrix}.
\]
This means that the group generated by $\rho_{E_0/L, 9}(\sigma')$ is conjugate to an order 3 subgroup of 9.36.0.6 or 9.36.0.8 mod 9, and a \href{https://github.com/abbey-bourdon/minimal_torsion_curves/blob/main/Proposition6.6/code}{Magma computation} shows no such subgroup exists.

Now, suppose $r \leq k-2$. Then $\{3^{k-r-2} \varphi(P), 3^{k-2}\varphi(Q)\}$ is a basis of $E'[9]$ since $\ker(\varphi) \subseteq \langle P \rangle$. Since $P \in E(F)$, we have $3^{k-r-2} \varphi(P) \in E'(F)$. Moreover, for $\sigma \in \Gal_F$ as above,
\begin{align*}
\sigma(3^{k-2}\varphi(Q))&=3^{k-2}\varphi(\sigma(Q))=4 \cdot 3^{k-2}\varphi(Q),
\end{align*}
so
\[
\begin{pmatrix}
1 & 0 \\
0 & 4
\end{pmatrix}\in \im \rho_{E'/F,9}.
\]
We reach a contradiction as before.\end{proof}

\subsection{Proof of Proposition \ref{DivConditionsPrime3}}
Let $F=\Q(x)$. By Lemma \ref{Cor:IsogenyDef}, there is a model of $E/F$ where $P \in E(F)$ and such that there exists an $F$-rational cyclic isogeny $\varphi:E \rightarrow E'$ with $j(E')\in \Q$. Replacing $E$ with an isogenous curve if necessary, we may assume $\varphi$ has degree $3^r$. Since $E$ has a 3-isogeny over $F$, so does $E'$ by \cite[Proposition 3.2]{CremonaNajmanQCurve}. It follows from \cite[Proposition 3.3]{CremonaNajmanQCurve} that any $E_0/\Q$ with $j(E_0)=j(E')$ has a $\Q$-rational cyclic 3-isogeny, and $E_0$ gives a degree 1 closed point on $X_1(3)$ by Proposition \ref{IsogenyToRationalPoints}. By \cite[Corollary 1.3.1]{RSZ21}, the 3-adic image is one of the following groups, and we will consider each separately. Since we are interested in closed points on modular curves, and the degree of these points is not altered by taking quadratic twists, we may restrict to cases where $-I$ is contained in the 3-adic image. 
\begin{enumerate}
\item $\im \rho_{E_0, 3^{\infty}}=3.4.0.1$: Since $d=0$ and $\deg(X_1(3^k) \rightarrow X_1(3))=3^{2k-2}$, the divisibility condition of Proposition \ref{Prop1.6Restated} is best possible for all $k \in \Z^+$, and one can take $j_{min}=j(E_0)$.
\item $\im \rho_{E_0, 3^{\infty}}=3.12.0.1$ or $9.12.0.1$: If $\im \rho_{E_0, 3^{\infty}}=3.12.0.1$, then $E_0$ is 3-isogenous to an elliptic curve $E'/\Q$ with a rational cyclic 9-isogeny. By Proposition \ref{LadicIsogenyLemma}, the 3-adic Galois representation of $E'$ is completely determined by $\rho_{E_0, 3^{\infty}}$, and so it suffices to check a specific example. By viewing isogeny class \href{https://www.lmfdb.org/EllipticCurve/Q/175/b/}{175.b} in the LMFDB, we see that $\im \rho_{E', 3^{\infty}}=9.12.0.1$. Replacing $E_0$ with $E'$ if necessary, we are free to assume $E_0$ has image 9.12.0.1. Thus $E_0$ corresponds to closed points on $X_1(3)$ and $X_1(9)$ of degree 1 and 3, respectively. Since $d=1$ and $\deg(X_1(3^k) \rightarrow X_1(3))=3^{2k-2}$ for $k \geq 2$, the divisibility condition of Proposition \ref{Prop1.6Restated} is best-possible for $k \in \Z^+$ and we can take $j_{min}=j(E_0)$.
\item $\im \rho_{E_0, 3^{\infty}}=9.12.0.2$: A \href{https://github.com/abbey-bourdon/minimal_torsion_curves/blob/main/Proposition6.1/MagmaProp6.1_Part1.txt}{Magma computation} shows that for $E_0/\Q$ with this image, there exists a cubic extension $L$ such that $E_0/L$ has an $L$-rational 9-isogeny and an independent 3-isogeny. Thus over $L$, the curve $E_0$ is 3-isogenous to $E_1/L$ with a rational cyclic 27-isogeny. Thus $E_1$ gives a closed point on $X_1(27)$ of degree at most 27 by Proposition \ref{IsogenyToRationalPoints}. Since $d=1$ and $\deg(X_1(3^k) \rightarrow X_1(27))=3^{2k-6}$, the divisibility conditions of Proposition \ref{Prop1.6Restated} are best-possible for all $k \geq 3$ with $j_{min}=j(E_1)$. No elliptic curve in $\mathcal{E}$ with $j \in \Q$ has a point of order 27 in this degree or lower. 
\item $\im \rho_{E_0, 3^{\infty}}=9.36.0.2$ or $27.36.0.1$:  As in case (2), the isogeny class 304.c in the LMFDB shows we are free to assume $E_0$ has image 27.36.0.1. A \href{https://github.com/abbey-bourdon/minimal_torsion_curves/blob/main/Proposition6.1/MagmaProp6.1_Part1.txt}{Magma computation} shows that for $E_0/\Q$ with this image, there exists a cubic extension $L$ such that $E_0/L$ has an $L$-rational 27-isogeny and an independent 3-isogeny. Thus over $L$, the curve $E_0$ is 3-isogenous to $E_1/L$ with a rational cyclic 81-isogeny. Thus $E_1$ gives a closed point on $X_1(81)$ of degree at most 81 by Proposition \ref{IsogenyToRationalPoints}. Since $d=2$ and $\deg(X_1(3^k) \rightarrow X_1(81))=3^{2k-8}$, the divisibility conditions of Proposition \ref{Prop1.6Restated} are best-possible for all $k \geq 4$ with $j_{min}=j(E_1)$. No elliptic curve in $\mathcal{E}$ with $j$-invariant in $\Q$ has a point of order 81 in this degree or lower. 
\item $\im \rho_{E_0, 3^{\infty}}=9.36.0.3$ or $9.36.0.6$: As in case (2), the isogeny class \href{https://www.lmfdb.org/EllipticCurve/Q/22491/u/}{22491.u} in the LMFDB shows we are free to assume $E_0$ has image 9.36.0.6. Then $3^{\max(0,2k-3)} \mid [F:\Q]$ by Proposition \ref{Prop5.7}. The conclusion follows with $j_{min}=j(E_0)$.
\item $\im \rho_{E_0, 3^{\infty}}=9.36.0.1$, $9.36.0.4$, or $9.36.0.5$: As in case (2), the isogeny class 432.b in the LMFBD shows we are free to assume $E_0$ has image 9.36.0.4. A twist of $E_0$ has a rational point of order 9 and $d=2$, so divisibility conditions of Proposition \ref{Prop1.6Restated} are best possible for all $k \in \Z^+$ with $j_{min}=j(E_0)$.
\item $\im \rho_{E_0, 3^{\infty}}=9.36.0.7$ or $9.36.0.9$: As in case (2), the isogeny class \href{https://www.lmfdb.org/EllipticCurve/Q/1734/k/}{1734.k} in the LMFDB shows we are free to assume $E_0$ has image 9.36.0.7. A \href{https://github.com/abbey-bourdon/minimal_torsion_curves/blob/main/Proposition6.1/MagmaProp6.1_Part1.txt}{Magma computation} shows that there exists a cubic extension $L$ such that a twist $E_0^t$ of $E_0/L$ has an $L$-rational point of order 9 (say $Q$) and an independent 3-isogeny (say, with kernel generated by $R$). Then $\psi:E_0^t \rightarrow E_1=E_0^t/\langle R \rangle$ is a degree 3 isogeny, where $E_1$ has an $L$-rational cyclic 27-isogeny and $\psi(Q) \in E_1(F)$ is a point of order 9. Moreover, $\psi(Q)$ is in the kernel of the rational 27-isogeny. Thus the image of the 27-isogeny character $\chi$ associated to $E_1/L$ lands in $\{1,10,19\}$ and $E_1$ attains a point of order 27 in $\overline{L}^{\ker(\chi)}$, an extension of $L$ of degree dividing 3. Hence $E_1$ corresponds to a point on $X_1(27)$ of degree dividing 9. Since $d=2$, the divisibility conditions of Proposition \ref{Prop1.6Restated} are best-possible for $k \geq 3$ with $j_{min}=j(E_1)$. No elliptic curve in $\mathcal{E}$ with $j$-invariant in $\Q$ has a point of order 27 in this degree or lower. 
\item $\im \rho_{E_0, 3^{\infty}}=9.36.0.8$: By Proposition \ref{Prop5.7}, we have $3^{\max(0,2k-3)} \mid [F:\Q]$. A \href{https://github.com/abbey-bourdon/minimal_torsion_curves/blob/main/Proposition6.1/MagmaProp6.1_Part1.txt}{Magma computation} shows that for $E_0/\Q$ with this image, there exists a cubic extension $L$ such that $E_0/L$ has an $L$-rational 9-isogeny and an independent 3-isogeny. As in case (3), there exists $E_1$ which is 3-isogenous to $E_0$ and gives a closed point on $X_1(27)$ of degree at most 27. Thus the divisibility conditions of Proposition \ref{Prop5.7} are best-possible for $k \geq 3$ with $j_{min}=j(E_1)$. No elliptic curve in $\mathcal{E}$ with $j$-invariant in $\Q$ has a point of order 27 in this degree or lower. 
\end{enumerate} 
It remains to consider $k=2$ if $\im \rho_{E_0,3^{\infty}}=9.12.0.2, 9.36.0.2, 9.36.0.7$, or $9.36.0.8$ and $k=3$ if $\im \rho_{E_0,3^{\infty}}=9.36.0.2$. By Corollary \ref{ImageOnlyCor}, results can be obtained by choosing a particular elliptic curve $E_0/\Q$ with this Galois image and computing the degrees of closed points on $X_1(3^k)$ for elliptic curves $3^r$-isogenous to $E_0$, where the $j$-invariants of the isogenous curves are roots of the modular polynomial $\Phi_{3^r}(X,j(E_0))$. For $r$ sufficiently large, any odd degree point on $X_1(3^k)$ associated to an elliptic curve $3^r$-isogenous to $E_0$ will have degree divisible by $3 \delta$, so there are only finitely many curves to test. We do \href{https://github.com/abbey-bourdon/minimal_torsion_curves/blob/main/Proposition6.1/MagmaProp6.1_Part2.txt}{this computation} in Magma.

\section{Results for non-CM classes and $\ell  =2$}

Any non-CM $\Q$-curve defined over a number field of odd degree is geometrically isogenous to an elliptic curve with rational $j$-invariant, by work of Cremona and Najman \cite[Theorem 2.7]{CremonaNajmanQCurve}. Thus the following can be viewed as a strengthening of \cite[Proposition 4.1]{OddDegQCurve}, which shows that if a non-CM $\Q$-curve has a point of order $2^k$ over a field of odd degree then $k \leq 4$. There exist non-CM elliptic curves over $\Q$ with a rational point of order $8$, so the following gives the best possible bound. The following proposition proves Theorem \ref{Thm1.3} in the case of $\ell=2$.

\begin{proposition} \label{2case}
Let $\mathcal{E}$ be a rational $\overline{\Q}$-isogeny class of non-CM elliptic curves. If $E \in \mathcal{E}$ and $x=[E,P]\in X_1(2^k)$ is a point of odd degree, then $k \leq 3$. Moreover:
\begin{enumerate}
\item The least odd degree of a point on $X_1(2)$ associated to $\mathcal{E}$ is 1 or 3, depending on whether there exists $E'/\Q \in \mathcal{E}$ with a rational point of order 2.
\item If there exists an odd degree point on $X_1(4)$ associated to $\mathcal{E}$, then the least such degree is 1 or 3. The least degree is 1 if and only if there exists $E'/\Q \in \mathcal{E}$ with a rational point of order 4. The least odd degree is 3 if and only if there exists $E'/\Q \in \mathcal{E}$ full 2-torsion over a cubic field or with $\im \rho_{E', 2^{\infty}} =4.8.0.2$.
\item If there exists an odd degree point on $X_1(8)$ associated to $\mathcal{E}$, then the least such degree is 1. This occurs if and only if there exists $E'/\Q \in \mathcal{E}$ with a rational point of order 8.
\end{enumerate}
We can take $j_{min} \in \Q$ unless there exists $E'/\Q \in \mathcal{E}$ full 2-torsion over a cubic field, in which case $j_{min}$ is 2-isogenous to a rational $j$-invariant and defines a cubic extension.
\end{proposition}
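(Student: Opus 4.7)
The plan is to follow the template of Propositions \ref{BestDivLargePrimes} and \ref{DivConditionsPrime3}. Set $F = \Q(x)$, so $[F:\Q]$ is odd. Lemma \ref{Cor:IsogenyDef} supplies a Weierstrass model of $E/F$ with $P \in E(F)$ and an $F$-rational cyclic $2^r$-isogeny $\varphi: E \to E'$ with $j(E') \in \Q$; as in the proof of Proposition \ref{BestDivLargePrimes}, the results of Cremona--Najman \cite{CremonaNajmanQCurve} let us pass from this $F$-rational structure to an $E_0/\Q \in \mathcal{E}$ with $j(E_0) = j(E')$ carrying the analogous rational $2$-power isogeny data. By Corollary \ref{ImageOnlyCor}, the degrees of closed points on $X_1(2^k)$ from $\mathcal{E}$ are determined entirely by $\im \rho_{E_0/\Q, 2^\infty}$, so it suffices to enumerate the finitely many $2$-adic images from the Rouse--Zureick-Brown classification \cite{RouseDZB} and, for each, identify the least odd-degree point on $X_1(2^k)$ arising from $E_0$ and its $2^r$-isogenous neighbors.

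The case $X_1(2)$ is immediate: the Weber function at a point of order $2$ is its $x$-coordinate, a root of the cubic $x^3 + Ax + B$ cutting out the $2$-torsion, so the closed-point degree is $1$ when $E_0$ has a rational point of order $2$ and $3$ otherwise, yielding part (1). For $X_1(4)$ and $X_1(8)$, I would sweep through each Rouse--Zureick-Brown image of $E_0/\Q$, use Proposition \ref{LadicIsogenyLemma} to propagate the image to its $2^r$-isogenous neighbors, and record for each neighbor the minimal odd degree of an associated closed point via a direct residue-field computation. Degree $1$ is equivalent, after an appropriate twist, to possessing a rational point of order $2^k$, which pins down the degree-$1$ characterizations in (2) and (3). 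For item (2), the full-$2$-torsion-over-cubic hypothesis produces the degree-$3$ point on $X_1(4)$ via the $2$-isogeny from $E_0$ onto the quotient by its unique rational $2$-torsion subgroup, while $4.8.0.2$ is the sporadic image that contributes a further degree-$3$ point on $X_1(4)$, detected directly from the reduction modulo $4$.

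The bound $k \leq 3$ sharpens \cite[Proposition 4.1]{OddDegQCurve}, which already gives $k \leq 4$. Since Mazur's theorem forbids rational $16$-torsion on non-CM elliptic curves over $\Q$, the only remaining concern is whether a $2^r$-isogenous neighbor of $E_0$ could acquire a point of order $16$ over an odd-degree extension, and a finite Magma check across the Rouse--Zureick-Brown image list rules this out. The claim about $j_{\text{min}}$ being $2$-isogenous to a rational $j$-invariant in the full-$2$-torsion-over-cubic case falls out of the same enumeration: the minimal odd-degree point on $X_1(4)$ is realized there by the $2$-isogenous neighbor whose kernel is generated by the unique rational $2$-torsion point, with $j$-invariant in the cubic field $\Q(E_0[2])$. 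The main obstacle will be the clean execution of the case-by-case Rouse--Zureick-Brown analysis, especially isolating the exceptional $4.8.0.2$ image and verifying via Magma that no image allows $k = 4$.
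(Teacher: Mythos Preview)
Your outline would work in principle, but it diverges sharply from the paper's argument and replaces a short conceptual proof with a large computation. The paper does \emph{not} sweep the full Rouse--Zureick-Brown list. To prove $k\le 3$, it assumes an odd-degree point on $X_1(16)$, passes to a $2^r$-isogeny $\varphi:E\to E'$ as you do, and then uses the \emph{dual} isogeny to give $E_0/\Q$ an $F$-rational cyclic $2^r$-subgroup $\langle Q\rangle$. Since $2^{r-1}Q$ is a $2$-torsion point and $F$ has odd degree, $[\Q(2^{r-1}Q):\Q]\in\{1,3\}$; \cite[Proposition 3.6]{CremonaNajmanQCurve} then forces $[\Q(\langle Q\rangle):\Q]\in\{1,3\}$. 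Degree $1$ would give $j(E)\in\Q$, contradicting \cite[Theorem 3]{OddDeg}, so $[\Q(\langle Q\rangle):\Q]=3$. If $r\ge 2$, one obtains a degree-$3$ point on $X_1(4)$ over a degree-$3$ point on $X_1(2)$, which (via the data file for \cite[Corollaries 3.4, 3.5]{GJLR}) pins $\im\rho_{E_0,2^\infty}$ to one of $\{4.16.0.2,\,8.16.0.3,\,4.8.0.2\}$; these have $\ord_2$ of index $\ge 3$, so Proposition \ref{Prop1.6Restated} forces $[F:\Q]$ even. Hence $r=1$, $\varphi(P)$ has order $\ge 8$, and the same three-image trap recurs. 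For the refined statements the paper again avoids enumeration: if $E_0$ has a rational $2$-torsion point it invokes \cite[Proposition 4.6]{GJNajman} directly, and otherwise \cite[Lemma 6.3]{OddDegQCurve} reduces to $\im\rho_{E_0,2^\infty}\in\{2.2.0.1,\,4.8.0.2\}$ (up to $-I$), with a single Magma check needed only for $4.8.0.2$ on $X_1(8)$. Your approach subsumes these structural reductions inside a machine check over hundreds of images, which is correct but obscures why the answer comes out as it does.

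One concrete slip: in the full-$2$-torsion-over-cubic case $\im\rho_{E_0,2}=\text{2Cn}$, so $E_0/\Q$ has \emph{no} rational point of order $2$ and there is no ``unique rational $2$-torsion subgroup'' to quotient by. The degree-$3$ point on $X_1(4)$ comes from base-changing to the cubic field $L=\Q(E_0[2])$, taking any of the three $2$-isogenous quotients $E_1$ over $L$ (each acquiring an $L$-rational cyclic $4$-isogeny), and applying Proposition \ref{IsogenyToRationalPoints}; this is also where $j_{\min}\notin\Q$ but lies in a cubic field $2$-isogenous to a rational $j$-invariant.
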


\begin{proof}
Suppose for the sake of contradiction that $x\in X_1(16)$ is a point of odd degree, and let $F \coloneqq \Q(x)$. By Lemma \ref{Cor:IsogenyDef}, there exists a model of $E/F$ for which $P \in E(F)$ and such that there exists a rational cyclic isogeny $\varphi: E \rightarrow E'$ with $j(E') \in \Q$. If $\deg(\varphi)=2^r\cdot n$ for $n>1$ odd, we replace $E$ with an $n$-isogenous elliptic curve so we may assume $\varphi$ has degree $2^r$.

The dual isogeny $\hat{\varphi}: E' \rightarrow E$ is also cyclic of degree $2^r$, and so $E'$ possesses an $F$-rational cyclic subgroup of order $2^r$. Rational subgroups are twist-invariant, so any elliptic curve $E_0/\Q$ with $j(E_0)=j(E')$ will possess an $F$-rational subgroup of order $2^r$, say generated by $Q$. It follows that $2^{r-1} Q$ is $F$-rational. Since $F$ has odd degree, it must be that $[\Q(2^{r-1} Q) : \Q] = 1$ or 3. If $r=1$, then it follows $\Q(\langle Q \rangle)=\Q(2^{r-1} Q)$. Otherwise, by \cite[Proposition 3.6]{CremonaNajmanQCurve}, 
\[
[\Q(\langle Q \rangle):\Q(\langle 2Q \rangle)] \leq 2.
\]
Since $\Q(\langle Q \rangle)$ is contained if $F$, it must be of odd degree, and so $\Q(\langle Q \rangle)=\Q(\langle 2^{r-1}Q \rangle)=\Q(2^{r-1} Q)$. In either case, $\Q(\langle Q \rangle)$ is of degree 1 or 3. If $\Q(\langle Q \rangle)=\Q$, then $j(E) \in \Q$ and this contradicts \cite[Theorem 3]{OddDeg}. Thus $\Q(\langle Q \rangle)$ is of degree 3.

Next, we will show $r=1$. If not, then both $\langle 2^{r-2}Q\rangle$ and $2^{r-1} Q$ generate the same degree 3 extension of $\Q$. By Proposition \ref{IsogenyToRationalPoints}, the elliptic curve $E_0$ has a closed point of degree 3 on $X_1(4)$ lying above a degree 3 point on $X_1(2)$. By the classification of 2-adic images due to Rouse and Zureick-Brown \cite{RouseDZB}, this implies $E_0$ has 2-adic image X20b, X20a, or X20; see the data file associated to Corollaries 3.4 and 3.5 of \cite{GJLR}. These groups have RSZB labels 4.16.0.2, 8.16.0.3, and 4.8.0.2, respectively. Thus $\ord_2([\GL_2(\Z_2):\im \rho_{E_0,2^{\infty}}]) = 3$ or 4. However, this implies the degree of $F$ is even by Proposition \ref{Prop1.6Restated}. Therefore $r=1$ and $Q$ has order $2$. 

Thus $\varphi$ is of degree $2$, and $\varphi (P)$ is a point of order at least $8$ defined over $F$. This guarantees the existence of a closed point $y\in X_1(4)$ associated to $E'$ with $\deg(y)$ odd. Since $E_0$ gives a degree 3 point on $X_1(2)$, so does $E'$, and so $\deg(y)$ is an odd multiple of 3. As $\deg(X_1(4) \rightarrow X_1(2))=2$, this implies $\deg(y)=3$. But then we are again in the case where $E_0$ gives a closed point of degree 3 on $X_1(4)$ lying above a degree 3 point on $X_1(2)$. As above, $\ord_2([\GL_2(\Z_2):\im \rho_{E_0,2^{\infty}}]) = 3$ or 4 and we reach a contradiction via Proposition \ref{Prop1.6Restated}.

Finally, we will prove the refined degree bounds for each $k \leq 3$. As above, suppose $x \in X_1(2^k)$ is a point of odd degree associated to $E \in \mathcal{E}$ and $F \coloneqq \Q(x)$. We may assume there is an $F$-rational cyclic isogeny $\varphi:E \rightarrow E'$ of degree $2^r$ with $j(E')\in \Q$, where $E(F)$ has a point of order $2^k$. Let $E_0/\Q$ with $j(E_0)=j(E')$. By the second paragraph, if $E_0$ has a rational point of order 2, then $j(E) \in \Q$ and $E$ gives a closed point of degree 1 on $X_1(2)$ by \cite[Proposition 3.2]{CremonaNajmanQCurve}. By \cite[Proposition 4.6]{GJNajman}, the only odd degree closed points on $X_1(2^k)$ associated to $E$ have degree 1. So it suffices to consider the case when $E_0$ has no rational point of order 2.

The results for $X_1(2)$ are immediate, so suppose $2 \leq k \leq 3$. By \cite[Lemma 6.3]{OddDegQCurve}, the curve $E_0/\Q\in \mathcal{E}$ has full 2-torsion or a 4-isogeny defined over a number field of odd degree. Since $E_0(\Q)$ has no point of order 2, these conditions occur if $E_0$ has $\im \rho_{E_0,2} = \text{2Cn}$ or if $\im \rho_{E_0,2^{\infty}}\in \{4.16.0.2, 8.16.0.3, 4.8.0.2\}$, respectively, as above. In either case, $E_0$ or an elliptic curve 2-isogenous to $E_0$ has a cyclic 4-isogeny defined over a cubic field. This gives a degree 3 closed point on $X_1(4)$ by Proposition \ref{IsogenyToRationalPoints}. However, we will show no point on $X_1(8)$ associated to $\mathcal{E}$ has odd degree.

By replacing $E_0$ by a quadratic twist if necessary, we may assume the 2-adic image contains $-I$. Thus we may assume $\im \rho_{E_0,2^{\infty}}=2.2.0.1$ or 4.8.0.2; see \cite{RouseDZB} for curves that minimally cover X2 and X20. In the first, there are no odd degree points on $X_1(8)$ associated to $\mathcal{E}$ by Proposition \ref{Prop1.6Restated}, so assume $\im \rho_{E_0,2^{\infty}}=4.8.0.2$. By Corollary \ref{ImageOnlyCor}, results can be obtained by choosing a particular elliptic curve with this Galois image and computing the degrees of points on $X_1(8)$ for elliptic curves $2^r$-isogenous to $E_0$, where the $j$-invariants of the isogenous curves are roots of the modular polynomial $\Phi_{2^r}(X,j(E_0))$. A \href{https://github.com/abbey-bourdon/minimal_torsion_curves/blob/main/Proposition7.1/MagmaProp7.1.txt}{Magma computation} shows that 2 divides the degree of a point on $X_1(8)$ associated to any elliptic curve $2^r$-isogenous to $E_0$ for $r \in \Z^+$, as desired.
\end{proof}

\section{$\ell$-adic images of level $\ell$}

\begin{prop} \label{LevelEllImages}
Let $\mathcal{E}$ be a rational $\overline{\Q}$-isogeny class of non-CM elliptic curves and let $\ell$ be prime. Suppose there exists $E_0/\Q \in \mathcal{E}$ with $\ell$-adic Galois representation of level $\ell$. Among points on $X_1(\ell^k)$ associated to $\mathcal{E}$, a point of least degree can always be associated to $j_{min} \in \mathcal{E}$ which is at most $\ell$-isogenous to a rational $j$-invariant.
\end{prop}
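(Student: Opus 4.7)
The plan is to take any minimal torsion curve $E_{\min}\in\mathcal{E}$ for $X_1(\ell^k)$ giving a least-degree point $x=[E_{\min},P]$, and descend one $\ell$-isogeny at a time toward a rational $j$-invariant while preserving minimality.

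Set $F=\Q(x)$. By Lemma \ref{Cor:IsogenyDef}, after choosing an appropriate model of $E_{\min}$, there is an $F$-rational cyclic isogeny $\varphi\colon E_{\min}\to E'$ with $j(E')\in\Q$ and $P\in E_{\min}(F)$. Writing $\deg(\varphi)=\ell^r n$ with $\gcd(n,\ell)=1$ and replacing $E_{\min}$ by an $n$-isogenous elliptic curve (which leaves $F$, and hence the degree of $x$, unchanged since $n$-isogenies act as isomorphisms on $\ell$-power torsion), we may assume $\deg(\varphi)=\ell^r$. Decompose $\varphi$ as a chain of $F$-rational cyclic $\ell$-isogenies
\[
E_{\min}=E^{(0)}\xrightarrow{\psi_1}E^{(1)}\xrightarrow{\psi_2}\cdots\xrightarrow{\psi_r}E^{(r)}=E'.
\]
It suffices to prove that when $r\geq 2$, some closed point on $X_1(\ell^k)$ associated to $E^{(1)}$ has degree at most $\deg(x)$, since iterating this step produces a minimal torsion curve at distance at most $1$ from $E'$.

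To produce such a point, consider $P':=\psi_1(P)\in E^{(1)}(F)$. If $\ker\psi_1\not\subset\langle P\rangle$, then $P'$ has order $\ell^k$ and $[E^{(1)},P']\in X_1(\ell^k)$ has degree at most $[F:\Q]=\deg(x)$. Otherwise $\ker\psi_1=\langle\ell^{k-1}P\rangle$ and $P'$ has order $\ell^{k-1}$; in this case I would use the level-$\ell$ hypothesis together with Proposition \ref{LadicIsogenyLemma}(2) to pin down $\im\rho_{E^{(1)},\ell^\infty}$ as $\pi^{-1}(\im\rho_{E^{(1)},\ell^r})$, allowing me to select a preimage $\widetilde P\in E^{(1)}$ of $P'$ under multiplication by $\ell$, of order $\ell^k$, whose $\Gal_F$-stabilizer (modulo $\pm 1$) is no smaller than that of $P$ in $E_{\min}$, so that $[E^{(1)},\widetilde P]$ has degree at most $\deg(x)$.

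The main obstacle is this last case: among the $\ell^2$ preimages of $P'$ under multiplication by $\ell$ on $E^{(1)}$, one must exhibit one with a low-degree Weber-function value. The level-$\ell$ hypothesis on $E_0\in\mathcal{E}$ with $j(E_0)=j(E')$ is essential here: via Proposition \ref{LadicIsogenyLemma}(2) it forces the $\ell$-adic image of $E^{(1)}$ to be the full preimage of its mod-$\ell^r$ image, so no unexpected higher-$\ell$-power congruences constrain the lift, and an explicit matrix calculation in this image produces the required preimage.
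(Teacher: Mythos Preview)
Your strategy is quite different from the paper's. The paper does not descend along an isogeny chain; instead it argues case by case on $\im\rho_{E_0,\ell}$. When the image is surjective, Proposition~\ref{Prop1.6Restated} already gives $d=0$ and $E_0$ itself is minimal. When $E_0$ has a rational $\ell$-isogeny, Lemma~\ref{MinTorsionCurveForIsogeny} (for $\ell\ge 5$) or the casework in Proposition~\ref{DivConditionsPrime3} (for $\ell=3$) handles it. In the remaining cases ($C_{ns}^+(\ell)$, the exceptional $S_4$-type images, the normalizers of split Cartans, and the $\ell=2$ images) the paper explicitly constructs a curve $E_0/C_1$ over a small extension, checks via Proposition~\ref{IsogenyToRationalPoints} that it gives points on $X_1(\ell)$ and $X_1(\ell^2)$ of the right degree, and concludes with Proposition~\ref{Prop1.6Restated}. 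So the paper exhibits $j_{\min}$ directly rather than reaching it by descent.

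Your descent argument has a genuine gap in the case $\ker\psi_1=\langle\ell^{k-1}P\rangle$. You assert that the level-$\ell$ hypothesis and Proposition~\ref{LadicIsogenyLemma}(2) allow you to ``select a preimage $\widetilde P$'' with the required stabilizer, but you never carry this out, and two obstacles stand in the way. First, Proposition~\ref{LadicIsogenyLemma}(2) needs the \emph{source} curve of the isogeny to have known level; the level-$\ell$ hypothesis is for $E_0/\Q$, whereas your isogenies live over $F$, and $E'/F$ (a twist of the base change of $E_0$) need not have $\ell$-adic image equal to the full preimage of its mod-$\ell$ image. Second, even granting control of the level, the existence of an $F$-rational lift of $P'$ under multiplication by $\ell$ is a genuine cohomological obstruction: writing $\sigma(\widetilde P_0)-\widetilde P_0=b_\sigma\cdot\ell^{k-1}Q'$ (one checks the $\widetilde P_0$-component vanishes), the class of $b$ in $H^1(\Gal_F,(\Z/\ell\Z)(d))$ must vanish, which amounts to showing that every $\sigma$ acting trivially on $E^{(1)}[\ell]$ also fixes $\widetilde P_0$. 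Nothing you have written forces this, and it is precisely here that the structure of the specific mod-$\ell$ images enters. Without either filling this in uniformly or reverting to a case analysis like the paper's, the argument is incomplete.
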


\begin{proof}
If $\im \rho_{E_0,\ell}$ is surjective, this follows from Proposition \ref{Prop1.6Restated} and the formula for $\deg(X_1(\ell^k) \rightarrow X_1(\ell))$; see Proposition \ref{prop:Degree}. Suppose $\ell$ is odd. If $E_0/\Q$ has a rational $\ell$-isogeny, then the result follow from Lemma \ref{MinTorsionCurveForIsogeny} if $\ell \geq 5$ and the proof of Proposition \ref{DivConditionsPrime3} if $\ell=3$; see cases 1 and 2. .

If $E_0/\Q$ has no rational $\ell$-isogeny and $\im \rho_{E_0,\ell}$ is not surjective, then $\im \rho_{E_0, \ell^{\infty}}$ is the complete preimage of one of the following groups (see $\S2.2$). One may check that $\ord_{\ell}([\GL_2(\Z_{\ell}): \im \rho_{E_0, \ell^{\infty}}])=1$. We will consider each case separately. We will see $j_{min} \not\in \Q$ in each case.
\begin{itemize}
\item $C_{ns}^+(\ell)$: This is a subgroup of order $2(\ell^2-1)$, and up to a choice of basis it contains all matrices
\[
\begin{pmatrix}
a & 0  \\
0  & a
\end{pmatrix},  a \not\equiv 0 \pmod{\ell},
\]
\[
\begin{pmatrix}
a & 0  \\
0  & -a
\end{pmatrix},  a \not\equiv 0 \pmod{\ell}.
\]
Since $\ell$ is odd, these matrices form a group of order $2(\ell-1)$. Its fixed field has size $\ell+1$, so over an extension of degree $\ell+1$, the curve $E_0$ attains two independent $\ell$-isogenies with kernels $C_1$ and $C_2$. Then by Proposition \ref{IsogenyToRationalPoints}, the curve $E_0/C_1$ attains a closed point on $X_1(\ell)$ in degree dividing $(\ell+1)\cdot \varphi(\ell)/2=(\ell^2-1)/2$ and a closed point on $X_1(\ell^2)$ in degree dividing $(\ell+1)\cdot \varphi(\ell^2)/2=(\ell^2-1)\cdot \ell/2$. Since all closed points on $X_1(\ell)$ associated to $E_0$ have degree $(\ell^2-1)/2$, the claim holds by Propositions \ref{Prop1.6Restated} and \ref{prop:Degree} with $j_{min}=j(E_0/C_1)$.

\item 13S4, 5S4: A \href{https://github.com/abbey-bourdon/minimal_torsion_curves/blob/main/Proposition8.1/code}{Magma computation} shows the curve $E_0$ attains two independent $\ell$-isogenies in degree 6 with kernels $C_1$ and $C_2$. By Proposition \ref{IsogenyToRationalPoints}, the curve $E_0/C_1$ attains a closed point on $X_1(\ell)$ in degree dividing $6 \cdot \frac{\varphi(\ell)}{2}=3(\ell-1)$ and a closed point on $X_1(\ell^2)$ in degree dividing $6\cdot \frac{\varphi(\ell^2)}{2}=3 \ell(\ell-1)$. The claim holds by Propositions \ref{Prop1.6Restated} and \ref{prop:Degree} with $j_{min}=j(E_0/C_1)$.
\item 7Ns, 7Ns.2.1, 7Ns.3.1, 5Ns, 5Ns.2.1, 3Ns: The curve $E_0$ picks up 2 independent $\ell$-isogenies in degree 2 with kernels $C_1$ and $C_2$. By Proposition \ref{IsogenyToRationalPoints}, the curve $E_0/C_1$ attains a closed point on $X_1(\ell)$ in degree dividing $2 \cdot \frac{\varphi(\ell)}{2}=\ell-1$ and a closed point on $X_1(\ell^2)$ in degree dividing $2\cdot \frac{\varphi(\ell^2)}{2}= \ell(\ell-1)$. The claim holds by Propositions \ref{Prop1.6Restated} and \ref{prop:Degree} with $j_{min}=j(E_0/C_1)$.
\end{itemize}

Now suppose $\ell=2$. If $\im \rho_{E_0,2}=$ 2Cs, then $E_0$ has full 2-torsion over $\Q$. Hence it is isogenous over $\Q$ to an elliptic curve $E'/\Q$ with a $\Q$-rational cyclic 4-isogeny. By Proposition \ref{IsogenyToRationalPoints}, there is a degree 1 closed point on $X_1(4)$ associated to $E'$, and the claim follows from Proposition \ref{Prop1.6Restated} and the formula for $\deg(X_1(2^k) \rightarrow X_1(4))$. If $\im \rho_{E_0,2}=$ 2B or 2Cn, then $E_0$ has full 2-torsion over an extension $K$ of degree 2 or 3, respectively. There is $E'/K$ with a $K$-rational cyclic 4-isogeny, and the claim follows as in the previous case. 
\end{proof}

\begin{remark}
The expression for the least degree does not necessarily divide all degrees. For example, the proof of Proposition \ref{LevelEllImages} shows that the least degree of a point on $X_1(7^k)$ associated to $\mathcal{E}$ containing $E_0/\Q$ with $\im \rho_{E_0,7^{\infty}}=$ 7.28.0.1 is $7^{\max(0,2k-3)}\cdot 6$. However, there is a closed point on $X_1(7)$ associated to $E_0$ of degree 9, and points on $X_1(7^k)$ lying above this point will not have degree divisible by $7^{\max(0,2k-3)}\cdot 6$.
\end{remark}

The next result shows that working with curves in a non-CM rational geometric isogeny class $\mathcal{E}$ can yield points of strictly lower degree than those associated with any rational non-CM $j$-invariant. This relies on recent work of Furio \cite{Furio2024}, as applied in work of the first author with Ejder \cite{BourdonEjder}.

\begin{cor}\label{LowerDegCor}
The least degree of a point on $X_1(49)$ associated to any non-CM rational $\overline{\Q}$-isogeny class $\mathcal{E}$ is at most 42, whereas the least degree of a non-CM point $x \in X_1(49)$ with $j(x)\in\Q$ is 49.
\end{cor}

\begin{proof}
The first claim follows from the previous proposition: Consider, for example, an elliptic curve over $\Q$ with mod 7 image 7Ns. For the second claim, note that all known non-surjective 7-adic images for non-CM elliptic curves $E/\Q$ have level 7; see \cite{RouseDZB}. Thus any point on $X_1(49)$ associated to such an $E$ has degree at least 49. Equality holds for any elliptic curve over $\Q$ with a rational point of order 7. If $E/\Q$ has surjective 7-adic image, then $E$ gives a single closed point on $X_1(49)$ of degree 1176. By \cite[Theorem 1.1.6]{RSZ21} and \cite[Theorem 1.4]{FurioLombardo25}, other groups which could occur as 7-adic images for non-CM elliptic curves over $\Q$ would give rise to rational points on the modular curves with RSZB label 49.147.9.1 or 49.196.9.1. In the first case, such an $E/\Q$ would have mod 7 image landing in the normalizer of a non-split Cartan subgroup. Possible 7-adic images for $E$ are given in \cite[Theorem 1.9]{Furio2024}, and the last paragraph of the proof of Theorem 6 in \cite{BourdonEjder} shows $E$ gives a point on $X_1(49)$ of degree at least 168. Finally, if $E/\Q$ corresponds to a rational point on the modular curve labeled 49.196.9.1, then $\im \rho_{E,7^{\infty}}=49.196.9.1$ by \cite[Corollary 3]{BourdonEjder}. Such an image gives points on $X_1(49)$ of degree at least 294.
\end{proof}

\section{CM Elliptic Curves}
Let $\mathcal{E}$ be a $\overline{\Q}$-isogeny class of CM elliptic curves. The endomorphism algebra $K=\text{End}(E) \otimes \Q$ is an isogeny invariant, so all elliptic curves in $\mathcal{E}$ have CM by an order in the imaginary quadratic field $K$. In fact, since any CM elliptic curve is isogenous to one with CM by the maximal order (see, for example, \cite[Proposition 2.2]{BP}), the class $\mathcal{E}$ contains elliptic curves with CM by any possible order in $K$. In this section, we study the isogeny distance from a minimal torsion curve to an elliptic curve $E$ with CM by the full ring of integers in $K$, since $[\Q(j(E)):\Q]$ is minimal for $\mathcal{E}$. This builds on work of the first author and Clark \cite{BC1,BC2}. A key first step is to establish sharp lower bounds on the least degree of a point on $X_1(\ell^k)$ associated to $\mathcal{E}$. These appear as Propositions \ref{CaseSplit}, \ref{CaseInert}, and \ref{CaseRamified}. Taken together they imply Theorem \ref{CMthm}. Preliminary results about CM elliptic curves are summarized in Section \ref{Section2.5}. 

Throughout this section $w_K = \#\OO_K^\times$ and $h_K$ denotes the class number of $K$. 

\subsection{$\ell$ split in $K$}
\begin{prop}\label{CaseSplit}
Let $\mathcal{E}$ be a $\overline{\Q}$-isogeny class of elliptic curves with CM by orders in the imaginary quadratic field $K$, and let $\ell$ be a prime split in $K$. Then the least degree of a point on $X_1(\ell^n)$ associated to $\mathcal{E}$ is 
\[
2\cdot h_K \cdot \ell^{n-1}(\ell-1)/w_K,
\]
and this is attained by $E\in \mathcal{E}$ with CM by the maximal order in $K$. Thus $[\Q(j_{min}):\Q]=h_K$.
\end{prop}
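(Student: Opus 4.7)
The plan is to compute the minimum degree explicitly on an $\OO_K$-CM curve in $\mathcal{E}$ for the upper bound, and then use the degree formulas of \cite{BC2} to rule out that any $E \in \mathcal{E}$ with CM by a non-maximal order does better.

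For the upper bound, fix $E_0 \in \mathcal{E}$ with $\End(E_0) = \OO_K$. Since $\ell$ splits, write $\ell\OO_K = \mathfrak{l}\bar{\mathfrak{l}}$; then $E_0[\mathfrak{l}^n]$ is a free $\OO_K/\mathfrak{l}^n$-module of rank one. Let $P$ generate $E_0[\mathfrak{l}^n]$. The action of $\Gal_{K(j(E_0))}$ on $E_0[\mathfrak{l}^n]$ factors through $(\OO_K/\mathfrak{l}^n)^\times \cong (\Z/\ell^n\Z)^\times$, and by Lozano-Robledo's classification \cite{LRcm} the image is all of $(\OO_K/\mathfrak{l}^n)^\times$, so the Galois orbit of $P$ over $K(j(E_0))$ has size $\ell^{n-1}(\ell-1)$. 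Crucially, any $\sigma \in \Gal_{\Q(j(E_0))} \setminus \Gal_{K(j(E_0))}$ swaps $\mathfrak{l}$ and $\bar{\mathfrak{l}}$, hence moves $P$ into $E_0[\bar{\mathfrak{l}}^n]$ and cannot fix it. Combined with $[K(j(E_0)):\Q(j(E_0))] = 2$ and $[\Q(j(E_0)):\Q] = h_K$, the formula of Lemma \ref{lem:ResidueField} yields a closed point on $X_1(\ell^n)$ of degree $2h_K \ell^{n-1}(\ell-1)/w_K$, with the factor of $w_K$ coming from quotienting by $\Aut(E_0)$ in passing from $P$ to $\mathfrak{h}(P)$.

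For the lower bound, any $E \in \mathcal{E}$ has CM by some order $\OO = \Z+\mathfrak{f}\OO_K \subseteq \OO_K$. The main degree formulas in \cite{BC2} translate into a lower bound on $\deg([E,P])$ of the shape $h(\OO) \cdot T(\OO,\ell^n)/w_\OO$, where $T(\OO,\ell^n)$ depends on the factorization of $\ell$ inside $\OO$. If $\gcd(\ell,\mathfrak{f})=1$, then $\ell$ stays split in $\OO$ and $T(\OO,\ell^n) = 2\ell^{n-1}(\ell-1)$; the class number formula \eqref{ClassNo} then forces $h(\OO) \geq h_K$ with equality only when $\mathfrak{f}=1$. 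If instead $\ell \mid \mathfrak{f}$, then the extra factor in $h(\OO)$ supplied by \eqref{ClassNo} dominates any loss in the torsion contribution $T(\OO,\ell^n)/w_\OO$ arising from $\ell$ no longer being strictly split in $\OO$. In every case the bound $2h_K \ell^{n-1}(\ell-1)/w_K$ is attained only for $\OO = \OO_K$, yielding $[\Q(j_{\min}):\Q]=h_K$.

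The main obstacle is the case $\ell \mid \mathfrak{f}$, where one must carefully extract the torsion factor from \cite{BC2} and verify that the class number growth from \eqref{ClassNo} outweighs any potential savings. The exceptional discriminants $\Delta_K = -3,-4$ (so that $w_K \in \{4,6\}$) also require a minor bookkeeping check, but the $w$-factors cancel symmetrically across the two sides of the comparison and do not affect the bottom line.
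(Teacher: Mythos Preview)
Your overall strategy---compute the degree for an $\OO_K$-CM curve directly, then use the \cite{BC2} degree formulas together with the class number formula to show no non-maximal order can do better---is sound, and is in the same spirit as the paper's proof. However, there is a genuine error in the lower-bound argument.

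You assert that the class number formula \eqref{ClassNo} forces $h(\OO)\geq h_K$ \emph{with equality only when $\mathfrak{f}=1$}. This is false. Writing $h(\OO)=h_K\cdot\tfrac{2}{w_K}\cdot\mathfrak{f}\prod_{p\mid\mathfrak{f}}\bigl(1-\leg{\Delta_K}{p}\tfrac{1}{p}\bigr)$, one sees that for $w_K=2$ and $\mathfrak{f}=2$ with $2$ split in $K$, the product collapses to $2\cdot\tfrac{1}{2}=1$, so $h(\OO)=h_K$. In this situation your lower bound $h(\OO)\cdot T(\OO,\ell^n)/w_\OO=h_K\cdot\ell^{n-1}(\ell-1)$ exactly matches the claimed minimum $2h_K\ell^{n-1}(\ell-1)/w_K$, and your argument neither rules out the conductor-$2$ order as a minimizer nor establishes the uniqueness you claim at the end. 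The paper confronts precisely this boundary case: after reducing (via a uniform divisibility from \cite[Theorem 6.2]{BC1}, valid for \emph{all} conductors at once, which also spares it your separate treatment of $\ell\mid\mathfrak{f}$) to the possibility $\mathfrak{f}=2$ with $2$ split, it invokes the exact least-degree formulas of \cite[Theorems 6.2, 6.6]{BC2}---which carry an extra factor of $2$ not visible in the crude bound $h(\OO)\cdot T/w_\OO$---to obtain a contradiction.

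Your case $\ell\mid\mathfrak{f}$ is also left as an assertion. The claim that class-number growth ``dominates any loss in the torsion contribution'' is plausible but requires the explicit form of $T(\OO,\ell^n)$ from \cite[Theorem 4.1]{BC2} in the non-coprime regime, which you have not extracted. The paper's approach via \cite{BC1} sidesteps this entirely.
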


\begin{proof}
That such an $E$ gives a point on $X_1(\ell^n)$ in this degree follows from \cite[Theorem 6.2]{BC2} and $[\Q(j(E)):\Q]=h_K$ by Section \ref{Section2.5}; note that $\ell > 3$ if $\Delta=-3,-4$ by the assumption that $\ell$ is split in $K$. It remains to show this is the least possible degree among all $E' \in \mathcal{E}$. We may assume $\ell^n>2$. Since the endomorphism algebra is an isogeny invariant, any $E' \in \mathcal{E}$ has CM by an order in $K$. Since we already have the least degree for a point with CM by the maximal order, we will henceforth assume $E'$ has CM by an order in $K$ of conductor $\mathfrak{f}>1$. For any point $x=[E',P'] \in X_1(\ell^n)$, by \cite[Theorem 6.2]{BC1} we have
\[
h_K \cdot \frac{\ell^{n-1}(\ell-1)}{2} \mid  \deg(x).
\]
If $\deg(x)=h_K \cdot \frac{\ell^{n-1}(\ell-1)}{2}\cdot d<2\cdot h_K \cdot \frac{\ell^{n-1}(\ell-1)}{w_K}$ for some $d \in \Z^+$, it must be that $d=1$ and $w_K=2$.
This implies $[\Q(j(E')):\Q]=h_K$. The degree of this extension is equal to the class number of the order $\OO$; see Equation \ref{ClassNo} in Section \ref{Section2.5}. Since $w_K=2$, then $[\Q(j(E')):\Q]=h_K$ implies $E'$ has CM by an order in $K$ of conductor dividing 2. Since we have assumed $E'$ has CM by an order of conductor $\mathfrak{f}>1$, we will suppose $E'$ has CM by the order in $K$ of conductor 2. By Equation \ref{ClassNo}, this can happen only if 2 is split in $K$. But this contradicts \cite[Theorem 6.2]{BC2} if $\ell$ is odd and \cite[Theorem 6.6]{BC2} if $\ell=2$.
\end{proof}

\subsection{$\ell$ inert in $K$}

\begin{prop} \label{CaseInert}
Let $\mathcal{E}$ be a $\overline{\Q}$-isogeny class of elliptic curves with CM by orders in the imaginary quadratic field $K$, and let $\ell$ be a prime inert in $K$. The least degree of a point on $X_1(\ell^n)$ associated to $\mathcal{E}$ is 
\[
\delta \coloneqq \begin{cases} h_K \cdot \ell^{\lfloor{3(n-1)/2}\rfloor+1}(\ell^2-1)/w_K \text{ if $\ell=2$,} \\ h_K \cdot \ell^{\lfloor{3(n-1)/2}\rfloor}(\ell^2-1)/w_K \text{ if $\ell \geq 3$.} \end{cases}
\]
This is attained by $E\in \mathcal{E}$ with CM by an order in $K$ of conductor  $\mathfrak{f}=\ell^{\lfloor{n/2}\rfloor}$. Moreover, if $j_{min}$ is the $j$-invariant of a minimal torsion curve of level $\ell^n$ and $n \geq 5$, then
\[
[\Q(j_{min}):\Q] \geq h_K \cdot \frac{\ell^{(n-5)/2}}{3}(\ell+1).
\]
Thus $[\Q(j_{min}):\Q] \rightarrow \infty$ as $n \rightarrow \infty$.
\end{prop}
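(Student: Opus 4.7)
The plan is to parametrize elliptic curves in $\mathcal{E}$ by the conductor $\mathfrak{f}$ of the CM order $\OO_\mathfrak{f} = \Z + \mathfrak{f}\OO_K$ and, as in the proof of Proposition \ref{CaseSplit}, minimize the attainable degree of a point on $X_1(\ell^n)$ over all choices of $\mathfrak{f}$.

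To establish achievability, I would take $E \in \mathcal{E}$ with CM by $\OO_{\ell^{\lfloor n/2 \rfloor}}$ and apply \cite[Theorem 6.2]{BC2} (together with \cite[Theorem 6.6]{BC2} when $\ell = 2$) to produce a closed point on $X_1(\ell^n)$ whose degree is exactly the claimed $\delta$. The degree formula from these results decomposes as $[\Q(j(E)):\Q]$ times a purely $\ell$-local contribution coming from the Cartan-type image of $\rho_{E,\ell^\infty}$ over $\Q(j(E))$; plugging in $\mathfrak{f} = \ell^{\lfloor n/2 \rfloor}$ via Equation \ref{ClassNo} should recover $\delta$ after simplification.

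To show this is the minimum, let $E' \in \mathcal{E}$ have CM by $\OO_\mathfrak{f}$, and write $\mathfrak{f} = \ell^k m$ with $\gcd(m, \ell) = 1$. Using \cite[Theorem 6.2]{BC1} as a lower bound, the degree of any $x \in X_1(\ell^n)$ associated to $E'$ is at least $h(\OO_\mathfrak{f}) \cdot T_k(n)$, where $T_k(n)$ is a purely $\ell$-local quantity depending only on $k$ and $n$. Equation \ref{ClassNo} shows $h(\OO_\mathfrak{f})$ is strictly increasing in $m$, so we may reduce to $m = 1$. Since $\ell$ is inert, $\left( \frac{\Delta_K}{\ell} \right) = -1$, so Equation \ref{ClassNo} gives $h(\OO_{\ell^{k+1}}) = \ell \cdot h(\OO_{\ell^k})$ for $k \geq 1$, while $T_k(n)$ drops by a factor of $\ell$ as $k$ grows until $k$ reaches roughly $n/2$ and stabilizes thereafter. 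A case analysis on the parity of $n$ (and separately the $\ell = 2$ case) then shows the product $h(\OO_{\ell^k}) \cdot T_k(n)$ is uniquely minimized at $k = \lfloor n/2 \rfloor$, with minimum value $\delta$.

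Finally, the minimization forces any minimal torsion curve $E_{min}$ to have CM by $\OO_\mathfrak{f}$ with $v_\ell(\mathfrak{f}) = \lfloor n/2 \rfloor$, and Equation \ref{ClassNo} gives
\[
[\Q(j_{min}):\Q] = h(\OO_\mathfrak{f}) \geq h_K \cdot \frac{2}{w_K} \cdot \ell^{\lfloor n/2 \rfloor - 1}(\ell+1),
\]
which tends to infinity with $n$. The main obstacle will be the careful bookkeeping in the minimization step: handling the $w_K$ correction when $\Delta_K \in \{-3, -4\}$, tracking the extra factor of $\ell$ appearing in $\delta$ when $\ell = 2$, and verifying that the \cite{BC1} lower bound matches the \cite{BC2} upper bound exactly at $k = \lfloor n/2 \rfloor$ so that no $\mathfrak{f}$ whose $\ell$-part is a small perturbation of $\ell^{\lfloor n/2 \rfloor}$ sneaks in below the claimed bound.
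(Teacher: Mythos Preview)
Your approach is essentially the paper's: realize $\delta$ at conductor $\ell^{\lfloor n/2\rfloor}$ via the Bourdon--Clark degree formulas, then minimize over all conductors using the class number formula (Equation~\ref{ClassNo}).

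Two small corrections. First, the paper does \emph{not} claim the minimizer is unique at $k=\lfloor n/2\rfloor$; it only shows the inequality is strict when the $\ell$-adic valuation $c$ of the conductor satisfies $c<(n-3)/2$. That weaker statement already forces $[\Q(j_{min}):\Q]\to\infty$, but your assertion that the minimum ``forces $v_\ell(\mathfrak{f})=\lfloor n/2\rfloor$'' and the resulting explicit lower bound on $[\Q(j_{min}):\Q]$ are stronger than what the case analysis will actually deliver --- there is a short plateau of $c$-values near $\lfloor n/2\rfloor$ where the product $h(\OO_{\ell^c})\cdot T_c(n)$ can stay equal to $\delta$. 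Second, your citations are slightly off: for the inert achievability step the relevant result is \cite[Theorem~6.1]{BC2} (Theorem~6.2 there treats the split case), and for the lower bound the paper invokes \cite[Theorems~4.1 and~6.1]{BC2} to get the exact $\ell$-local term $T(\OO,\ell^n)$, not \cite[Theorem~6.2]{BC1}.
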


\begin{proof}
Suppose $E$ has CM by the order in $K$ of conductor  $\mathfrak{f}=\ell^{\lfloor{n/2}\rfloor}$. Note we can find such an $E\in \mathcal{E}$ by the first paragraph of $\S9$. Then by \cite[Theorem 6.1, Theorem 6.6]{BC2}, the point $x \in X_1(\ell^n)$ of least degree associated to such an $E$ has
\[
\deg(x)=2^{\epsilon} \cdot T(\OO, \ell^n)\cdot h(\OO),
\]
where $T(\OO, \ell^n)$ is as defined in \cite[Theorem 4.1]{BC2} and $\epsilon=1$ if $\ell=2$, $n>1$ and $\epsilon=0$ otherwise. Evaluating $T(\OO,\ell^n)$ via \cite[Theorem 4.1]{BC2} and $h(\OO)$ with Equation \ref{ClassNo} shows $\deg(x)=\delta$.

Now we will justify that this is the least possible degree of a point on $X_1(\ell^n)$ associated to $\mathcal{E}$. Suppose $E' \in \mathcal{E}$ has CM by the order of conductor $\ell^c\mathfrak{f}'$ in $\OO_K$ where $\ell \nmid \mathfrak{f}'$. By \cite[Theorem 4.1, Theorem 6.1]{BC2} and Equation \ref{ClassNo}, the least degree of a point on $X_1(\ell^n)$ associated to $E'$ is at least $\delta$, and this inequality is strict if $c <  \frac{n-3}{2}$. Thus, any minimal torsion curve of level $\ell^n$ must have $c \geq \frac{n-3}{2}$. If $n \geq 5$, then $[\Q(j_{min}):\Q] \geq h_K \cdot \frac{\ell^{(n-5)/2}}{3}(\ell+1)$ by Equation \ref{ClassNo} in Section 2.5. The conclusion follows. 
\end{proof}

\subsection{$\ell$ ramified in $K$}

\begin{prop} \label{CaseRamified}
Let $\mathcal{E}$ be a $\overline{\Q}$-isogeny class of elliptic curves with CM by orders in the imaginary quadratic field $K$, and let $\ell$ be a prime ramified in $K$. Then the least degree of a point on $X_1(\ell^n)$ associated to $\mathcal{E}$ is 
\[
\delta \coloneqq \begin{cases} h_K \text{ if $\ell^n\leq3$}, \\ h_K \cdot \ell^{\lfloor{3(n-1)/2}\rfloor+1}(\ell-1)/w_K \text{ if $\ell=2,n>1, \ord_2(\Delta_K)=2$}, \\ h_K \cdot \ell^{\lfloor{3n/2}\rfloor-1}(\ell-1)/w_K \text{ otherwise}. \end{cases}
\]
The least degree is attained by $E\in \mathcal{E}$ with CM by an order in $K$ of conductor  $\mathfrak{f}=\ell^{\lfloor{n/2}\rfloor}$. Also, for any $j$-invariant $j_{min}$ of a minimal torsion curve of level $\ell^n$ and $n \geq 5$, one has 
\[
[\Q(j_{min}):\Q] \geq h_K\cdot \frac{\ell^{(n-3)/2}}{3}.
\]
Thus $[\Q(j_{min}):\Q] \rightarrow \infty$ as $n \rightarrow \infty$.
\end{prop}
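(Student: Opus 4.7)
The plan is to mirror the proof strategy used for Proposition \ref{CaseInert}, adapting the key ingredients to the ramified setting. The argument separates cleanly into an upper-bound construction, a matching lower-bound, and a divergence statement for $[\Q(j_{min}):\Q]$.

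First I would establish the upper bound by exhibiting the claimed minimizer. Since every $\OO_K$-CM elliptic curve lies in $\mathcal{E}$ (as noted in the opening paragraph of $\S9$), and since $\mathcal{E}$ contains CM elliptic curves for every order in $K$, there exists $E \in \mathcal{E}$ with CM by the order $\OO$ of conductor $\mathfrak{f} = \ell^{\lfloor n/2 \rfloor}$. I would then apply \cite[Theorem 6.1]{BC2} (for odd $\ell$) and \cite[Theorem 6.6]{BC2} (for $\ell=2$) to write the least degree of a point on $X_1(\ell^n)$ associated to $E$ as $2^{\epsilon}\cdot T(\OO,\ell^n)\cdot h(\OO)$, with $\epsilon$ accounting for the $\ell=2$ adjustment. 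Plugging in the explicit formula for $T(\OO,\ell^n)$ from \cite[Theorem 4.1]{BC2} in the ramified case and substituting the class number formula (Equation \ref{ClassNo}), which for $\mathfrak{f}=\ell^{\lfloor n/2\rfloor}$ with $\ell$ ramified (so the local factor $1-\left(\frac{\Delta_K}{\ell}\right)\frac{1}{\ell}$ equals $1$) reduces to $h(\OO) = h_K \cdot \frac{2}{w_K}\cdot \ell^{\lfloor n/2 \rfloor}$, should yield exactly $\delta$ after routine arithmetic in each of the three sub-cases ($\ell^n \leq 3$; $\ell=2$ with $\ord_2(\Delta_K)=2$; all other ramified cases).

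Second, to establish the lower bound, I would take an arbitrary $E' \in \mathcal{E}$ with CM by an order of conductor $\ell^c \mathfrak{f}'$, $\ell \nmid \mathfrak{f}'$, and combine \cite[Theorem 4.1]{BC2} with \cite[Theorem 6.1]{BC2} (and \cite[Theorem 6.6]{BC2} when $\ell=2$) to write the least degree of a point on $X_1(\ell^n)$ associated to $E'$ as a product of a $T$-factor depending on $c$ and the class number $h(\OO')$ (inflated by $\mathfrak{f}'$ via Equation \ref{ClassNo}). The function $c \mapsto T(\OO', \ell^n)\cdot \ell^c$ is minimized when $c = \lfloor n/2 \rfloor$ under ramified local behavior (which I would verify by case analysis on $c$ relative to $n/2$, separately treating $c$ small, $c$ near the optimum, and $c$ large); any contribution from $\mathfrak{f}'>1$ only enlarges the degree because it multiplies by $\mathfrak{f}' \prod_{p \mid \mathfrak{f}'}(1 - \left(\frac{\Delta_K}{p}\right)\frac{1}{p}) \geq 1$.

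Finally, for the divergence $[\Q(j_{min}):\Q] \to \infty$, the same case analysis shows that the inequality on $T(\OO',\ell^n)\cdot h(\OO')$ is strict whenever $c$ is bounded below the optimal window $\lfloor n/2 \rfloor$ by a fixed amount (for instance $c < (n-3)/2$, analogous to the inert case). Since $[\Q(j(E')):\Q] = h(\OO') \geq \frac{h_K\cdot 2}{w_K}\cdot \ell^c$ up to a bounded factor, forcing $c$ to grow with $n$ forces the degree of $\Q(j_{min})$ to grow as well. The main obstacle I anticipate is the careful bookkeeping in the lower bound: the function $T(\OO',\ell^n)$ in the ramified case from \cite[Theorem 4.1]{BC2} has piecewise behavior depending on the parity of $n-2c$ and on whether $\ell=2$ with $\ord_2(\Delta_K)=2$ (an artifact of the distinction between $K = \Q(i)$-type and $K=\Q(\sqrt{-d})$-type ramification), so verifying optimality at $c=\lfloor n/2\rfloor$ and ruling out ties with other values of $c$ requires treating several sub-cases, including the exceptional small cases $\ell^n \leq 3$ where the point simply corresponds to a CM $j$-invariant in degree $h_K$.
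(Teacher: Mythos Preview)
Your proposal is correct and follows essentially the same approach as the paper's proof: construct the minimizer at conductor $\ell^{\lfloor n/2\rfloor}$ using \cite[Theorems 4.1, 6.1, 6.6]{BC2} together with Equation~\eqref{ClassNo}, then verify optimality over all conductors $\ell^c\mathfrak{f}'$ by the same references, noting strict inequality when $c<(n-3)/2$ to obtain the divergence of $[\Q(j_{\min}):\Q]$. The only cosmetic difference is that the paper cites just \cite[Theorem 6.6]{BC2} (rather than splitting into Theorems 6.1 and 6.6) for the ramified upper bound, and it suppresses the piecewise bookkeeping you flag as the main obstacle.
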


\begin{proof}
Suppose $E$ has CM by an order in $K$ of conductor  $\mathfrak{f}=\ell^{\lfloor{n/2}\rfloor}$; we can find such an $E\in \mathcal{E}$ by the first paragraph of $\S9$. Then by \cite[Theorem 6.6]{BC2} the least degree $x \in X_1(\ell^n)$ associated to $E$ is
\[
\deg(x)=2^{\epsilon}\cdot T(\OO, \ell^n)\cdot h(\OO),
\]
where $T(\OO, \ell^n)$ is as defined in \cite[Theorem 4.1]{BC2} and $\epsilon =1$ if $\ord_2(\Delta_K)=2, \ell=2,n$ an odd integer greater than 1 and $\epsilon=0$ otherwise. Evaluating $T(\OO,\ell^n)$ via \cite[Theorem 4.1]{BC2} and replacing $h(\OO)$ with the formula in Equation \ref{ClassNo} of $\S2.5$ shows $\deg(x)=\delta$. 

Now we will justify that this is the least possible degree of a point on $X_1(\ell^n)$ associated to $\mathcal{E}$. Suppose $E' \in \mathcal{E}$ has CM by the order of conductor $\ell^c\mathfrak{f}'$ in $\OO_K$ where $\ell \nmid \mathfrak{f}'$. By \cite[Theorem 4.1, Theorem 6.6]{BC2} and Equation \ref{ClassNo}, the least degree of a point on $X_1(\ell^n)$ associated to $E'$ is at least $\delta$, and this inequality is strict if $c < \frac{n-3}{2}$. Thus any minimal torsion curve of level $\ell^n$ must have $c \geq \frac{n-3}{2}$, meaning $[\Q(j_{min}):\Q] \geq h_K \cdot \frac{\ell^{(n-3)/2}}{3}$ by Equation \ref{ClassNo} in Section 2.5. The conclusion follows.
\end{proof}

\bibliographystyle{amsplain}
\bibliography{bibliography1}
\end{document}